\setlist[enumerate]{itemsep=0.2em, topsep=0.25em}
\newcommand*\bigcdot{\mathpalette\bigcdot@{.5}}
\newcommand*\bigcdot@[2]{\mathbin{\vcenter{\hbox{\scalebox{#2}{$\m@th#1\bullet$}}}}}
\newcommand{\sslash}{\mathbin{/\mkern-6mu/}}
 \newcommand{\vertiii}[1]{{\left\vert\kern-0.25ex\left\vert\kern-0.25ex\left\vert #1 
		\right\vert\kern-0.25ex\right\vert\kern-0.25ex\right\vert}}
\theoremstyle{definition}
\newcounter{maincoro}
\newtheorem{theorem}{Theorem}[section]
\newtheorem{lemma}[theorem]{Lemma}
\newtheorem{fact}[theorem]{Fact}
\newtheorem{proposition}[theorem]{Proposition}
\newtheorem{corollary}[theorem]{Corollary}
\theoremstyle{definition}
\newcounter{maintheorem}
\newtheorem{definition}[theorem]{Definition}
\newtheorem{example}[theorem]{Example}
\newtheorem{problem}[theorem]{Problem}
\theoremstyle{remark}
\newtheorem{remark}[theorem]{Remark}
\numberwithin{equation}{section}
\newcommand{\R}{\mathbb{R}}
\newcommand{\N}{\mathbb{N}}
\newcommand{\FF}{\mathcal{F}}
\newcommand{\A}{\mathcal{A}}
\renewcommand{\d}{{\rm d}}
\newcommand{\clco}{\mathop{\overline{\mathrm{co}}}\nolimits}
\newcommand{\Span}{\mathrm{span}}
\newcommand{\Iso}{\mathrm{Iso}}
\newcommand{\Aut}{\mathrm{Aut}}
\tikzset{
  arrow/.style   = {-{Implies}, double equal sign distance, line width=0.65pt},
  iffarrow/.style= {{Implies}-{Implies}, double equal sign distance, line width=0.65pt},
}
\renewcommand{\tocsection}[3]{%
	\indentlabel{\@ifnotempty{#2}{\bfseries\ignorespaces#1 #2\quad}}\bfseries#3}
\renewcommand{\tocsubsection}[3]{%
	\indentlabel{\@ifnotempty{#2}{\ignorespaces#1 #2\quad}}#3}
\def\@tocline#1#2#3#4#5#6#7{\relax
	\ifnum #1>\c@tocdepth 
	\else
	\par \addpenalty\@secpenalty\addvspace{#2}%
	\begingroup \hyphenpenalty\@M
	\@ifempty{#4}{%
		\@tempdima\csname r@tocindent\number#1\endcsname\relax
	}{%
		\@tempdima#4\relax
	}%
	\parindent\z@ \leftskip#3\relax \advance\leftskip\@tempdima\relax
	\rightskip\@pnumwidth plus1em \parfillskip-\@pnumwidth
	#5\leavevmode\hskip-\@tempdima{#6}\nobreak
	\leaders\hbox{$\m@th\mkern \@dotsep mu\hbox{.}\mkern \@dotsep mu$}\hfill
	\nobreak
	\hbox to\@pnumwidth{\@tocpagenum{\ifnum#1=1\bfseries\fi#7}}\par
	\nobreak
	\endgroup
	\fi}
\renewcommand\csname r@tocindent0\endcsname{0pt}
\def\l@subsection{\@tocline{2}{0pt}{2.5pc}{5pc}{}}
\DeclareMathOperator{\dist}{dist\,}
\DeclareMathOperator{\diam}{diam\,}
\DeclareMathOperator{\co}{co}
\DeclareMathOperator{\id}{Id}
\DeclareMathOperator{\iso}{Iso}
\newcommand{\nn}[1]{{\left\vert\kern-0.25ex\left\vert\kern-0.25ex\left\vert #1 
		\right\vert\kern-0.25ex\right\vert\kern-0.25ex\right\vert}}
\renewcommand{\geq}{\geqslant}
\renewcommand{\leq}{\leqslant}
\newcommand{\restricted}{\mathord{\upharpoonright}}
\newcommand{\norm}[1]{\left\Vert#1\right\Vert}
\newcommand{\abs}[1]{\left\vert#1\right\vert}
\newcommand{\NA}{\operatorname{NA}}
\newcommand{\e}{\varepsilon}
\newcommand{\ext}{\operatorname{ext}}
\newcommand{\eps}{\varepsilon}
\begin{document}

\title[Group equivariant RNP and its characterizations]{Group equivariant Radon-Nikod\' ym property and its characterizations}

\author[S.~Dantas]{Sheldon Dantas}
\address[S.~Dantas]{Czech Technical University in Prague, FEE, Department of Mathematics, Technická 2, 16627, Prague 6, Czech Republic \newline
\href{https://orcid.org/0000-0001-8117-3760}{ORCID: \texttt{0000-0001-8117-3760}}}
\email{\texttt{sheldon.dantas@fel.cvut.cz}}
\urladdr{www.sheldondantas.com}

\author[Doucha]{Michal Doucha}
\address[Doucha]{Institute of Mathematics, Czech Academy of Sciences, Žitná 25, 115 67 Praha 1, Czechia\newline
\href{https://orcid.org/0000-0003-3675-1378}{ORCID: \texttt{0000-0003-3675-1378}}}
\email{doucha@math.cas.cz}
\urladdr{https://users.math.cas.cz/~doucha/}

\author[Jung]{Mingu Jung}
        \address[Jung]{Department of Mathematics \& Research Institute for Natural Sciences, Hanyang University, 04763 Seoul, Republic of Korea \newline
\href{https://orcid.org/0000-0003-2240-2855}{ORCID: \texttt{0000-0003-2240-2855}}}
\email{mingujung@hanyang.ac.kr}
\urladdr{https://sites.google.com/view/mingujung/}

\author[Raunig]{Tomáš Raunig}
\address[Raunig]{Institute of Mathematics, Czech Academy of Sciences, Žitná 25, 115 67, Czech Republic\newline second address: Faculty of Mathematics and Physics, Charles University, Sokolovská 83, Prague, 186 00 \newline
\href{https://orcid.org/0009-0001-3425-8726}{ORCID: \texttt{0009-0001-3425-8726}}}
\email{raunig@karlin.mff.cuni.cz}

\thanks{}

\keywords{Radon-Nikod\'ym property, Krein-Milman property, Dentability, Bishop-Phelps property, Group equivariant operators, Group actions on Banach spaces}
\subjclass[2020]{46B22 (primary), and 46G10, 47B01, 47D03 (secondary)}

\begin{abstract} 
We introduce and study \emph{equivariant} versions of the Radon–Nikodým property for Banach spaces, together with the closely related notions such as dentability, the Bishop–Phelps and Krein–Milman properties, and Lindenstrauss’ property A, all considered in the presence of a continuous group action by linear isometries. While in the classical setting the Radon–Nikodým property, the Bishop–Phelps property and dentability are equivalent, the equivariant situation turns out to depend essentially on the acting group and requires non-trivial tools from abstract harmonic analysis and representation theory. We establish several implications among the equivariant counterparts of these properties. Namely, given a compact group $G$, the $G$-Bishop–Phelps property implies strong $G$-dentability, which in turn implies the $G$-Krein–Milman property and the classical Bishop-Phelps property, for any $G$-Banach space. Moreover, given a locally compact and second countable group $G$, the $G$-Radon-Nikodým property is equivalent to the classical Radon-Nikodým property, for any $G$-Banach space.
\end{abstract}

\maketitle

\tableofcontents

\section{Introduction}

The Radon-Nikod\'ym property for Banach spaces, which--as the name suggests--concerns Banach spaces for which the Radon-Nikod\'ym theorem for vector-valued measures holds, has been a central topic in Banach space theory for nearly a century. It first appeared in the works of Dunford \cite{Dun36,DunfordPettis1940} and played a pivotal role in the development of the theory of vector-valued measures. Although by definition an analytic property involving measures, the Radon-Nikod\'ym property (RNP, for short) admits numerous equivalent characterizations (see, for instance, \cite[Chapter VII.6]{DU1977}), some of them purely geometrical, that connect several major branches of Banach space theory. We refer the reader to the classical monographs \cite{JohnsonLindenstrauss2003, Bourgin1983, DU1977, Phelps1993}, as well as to more recent references such as \cite{FHHMZ2010, Pisier2016, Ryan2002}.

In this paper, we are interested in what can be dubbed an equivariant Radon-Nikod\'ym property; that is, the RNP in the presence of a continuous group action on a~Banach space by linear isometries. The motivations are at least twofold. First, and that was the original and foremost motivation for us, we want to extend the already rich and expanding research on norm-attaining operators that are invariant under a~group action by linear isometries. We refer to the papers \cite{DFJ2023, DFJR2023, Falco2021, FGJM2022} for some of the very recent developments in this direction, as well as to \cite{FI2024, FI2026} for related work. One of the cornerstones in the norm-attaining theory is the Bishop-Phelps theorem for functionals and its extension to more general operators turns out to be valid exactly for those Banach spaces having the RNP. The equivariant Bishop-Phelps theorem, that is, the version concerning group equivariant operators, is therefore a natural continuation in this line of work and a starting point of our study of the equivariant RNP.

Second, research in this direction belongs to the emerging area of equivariant Banach space theory. From the category-theoretical point of view, it amounts to upgrading the classical category of Banach spaces into an equivariant category--whose objects are Banach spaces endowed with group actions and whose morphisms are equivariant linear operators. This perspective parallels the developments that took place several decades ago in topology and geometry, where equivariant homotopy and co/homology theories have become central in the subject. There are now plenty of tools at our disposal. Indeed, equivariant Banach space theory can draw on representation theory of group actions on Banach spaces, which is particularly well developed for compact groups (see e.g. the monograph \cite{HofMorbook}), and on analytic group theory studying rigidity of group actions on Banach spaces (see e.g. \cite{BFGM2007,BHV2008}). For a general survey on category-theoretic and homology methods in Banach space theory, we refer to the modern monograph \cite{CS-Cbook} and to \cite{CaFe23} for a recent work in this subject in the equivariant setting.

Here we consider equivariant versions of the RNP and several related concepts, including the Riesz representation property, the already mentioned Bishop-Phelps property, dentability of bounded subsets, the Krein-Milman property and the Lindenstrauss' property A, all defined in the presence of a group action. While in the classical case, all these conditions, except for the last two which are implied by the rest, are equivalent, the equivariant setting turns out to be substantially more intricate. Several of the implications require non-trivial tools from abstract harmonic analysis and demand that the acting group be either compact, in which case one employs the Big Peter-Weyl theorem, or locally compact and $\sigma$-compact, taking advantage of the existence of a $\sigma$-finite Haar measure.

We review the main results below. Note that it is not an exhaustive list of the results but rather a summary of the main implications among the equivariant properties. Throughout the paper, we adopt the convention that equivariant (or sometimes invariant) properties are denoted by the prefix `$G$-’, where $G$ stands for a generic group and serves as a shorthand for \emph{$G$-equivariant} or \emph{$G$-invariant}.  For example, we write \emph{$G$-RNP} in place of the more accurate \emph{group equivariant Radon-Nikod\'ym property}. This notation will be used consistently for the reader’s convenience. For the precise definitions that appear in the following, we refer the reader to \Cref{section:definitions} below.

\noindent\textbf{Main results.}

\begin{enumerate}[topsep=0pt]
    \item The $G$-Bishop–Phelps property implies strong $G$-dentability, and thus also the classical Bishop-Phelps property, when $G$ is compact.
    \item Strong $G$-dentability implies the $G$-Krein–Milman property, again when $G$ is compact.
    \item Weak $G$-dentability implies the $G$-RNP, and the converse holds if $G$ is locally compact and second countable. In this case, the $G$-RNP is equivalent to the classical RNP.
\end{enumerate}
\

In addition, we present several examples showing that not all these implications can be reversed, together with further phenomena illustrating how the equivariant cases differ from the classical one. We also thoroughly investigate the equivariant analogue of Lindenstrauss' property A.

The paper is organized as follows. \Cref{section:prelim} recalls the Radon-Nikod\'ym property, the central notion of the paper, and sets the notation and basic notions for the sequel. \Cref{section:definitions} rigorously introduces all the new equivariant versions of the properties discussed informally above and shows their fundamental properties. The core of the paper, where, in particular, all the results stated above are proved, lies in \Cref{section:G-BP,section:G-dent,section:G-RNP}. \Cref{section:G-NA} deals with norm-attaining equivariant operators. Interestingly, the original motivation for this work has evolved into a natural offshoot of the main part of the paper, yet it naturally follows from the preceding sections and is of independent interest. Finally, \Cref{section:summary} summarizes the results and suggests open problems. A reader reaching this point will recognize that there is a plethora of new directions to be explored.

To conclude, we believe that developing an equivariant counterpart of the Radon-Nikod\'ym property opens a promising path toward a systematic understanding of Banach space geometry under group actions. In particular, the equivariant viewpoint not only extends the reach of classical Banach space theory but also provides a new language in which linear and group-theoretic aspects coexist naturally. The equivariant Radon–Nikod\'ym property, together with its related notions studied here, thus constitutes a natural starting point for further investigations at the intersection of functional analysis, abstract harmonic analysis, and geometric group theory.

\section{Preliminaries}\label{section:prelim}

As the main goal of this paper is the study of the group equivariant version of the Radon-Nikodým property for Banach spaces, we begin by briefly recalling its classical definition. For simplicity and as used already in the introduction above, we will be denoting this property as RNP. A Banach space $X$ is said to have the RNP if the classical Radon-Nikodým theorem (see, for instance, \cite[Chapter III]{DU1977}) extends to $X$-valued measures. More precisely, $X$ has the RNP if and only if for every $\sigma$-finite measure space 
$(\Omega, \Sigma, \lambda)$ and for every $X$-valued measure $\mu: \Sigma \to X$ 
that has bounded variation and is $\lambda$-absolutely continuous, 
there exists a $\lambda$-Bochner integrable function $g: \Omega \to X$ such that 
\begin{equation} \label{eq1}
    \mu(E) = \int_E g \, d\lambda
\end{equation}
for all $E \in \Sigma$. It is enough to consider the case where $\lambda(\Omega) < \infty$ and 
$\|\mu(E)\| \le \lambda(E)$ for all $E \in \Sigma$; in this situation, any derivative $g: \Omega \to X$ is necessarily bounded. 
Equivalently, $X$ satisfies the RNP if and only if for every vector measure 
$\mu: \Sigma \to X$ with $\|\mu(E)\| \le \lambda(E)$, 
there exists a bounded measurable function $g: \Omega \to X$ 
such that \eqref{eq1} holds for all $E \in \Sigma$. 
This provides an equivalent formulation of the RNP, which will be useful throughout the text. This is in turn equivalent to the statement that, for every continuous linear operator $T: L_1(\Omega,\Sigma,\lambda) \to X$, 
there exists a bounded measurable function $g: \Omega \to X$ such that 
\[
    T(f) = \int_\Omega f g \, d\lambda, \quad \text{for all } f \in L_1(\Omega,\Sigma,\lambda). 
\]

It is well known that $c_0$ and $L_1[0,1]$ do not satisfy the RNP. 
On the other hand, reflexive spaces do satisfy the RNP \cite{Phillips1943}, 
as do separable dual spaces \cite{DunfordPettis1940}. 
Moreover, if a Banach space $X$ has the RNP, then every closed subspace of $X$ also has it. 
Conversely, if every closed separable subspace of $X$ has the RNP \cite{Ronnow1967}, 
then $X$ itself has the RNP (see also \cite{Uhl1972}). 
In particular, if every separable subspace of $X$ is isomorphic to a subspace of a separable dual space, then $X$ has the RNP. In particular, $\ell_1(\Gamma)$ has the RNP for any uncountable set $\Gamma$. 

In what follows, we provide the basic notation and conventions we will be using throughout the paper. We follow standard notation coming, essentially, from the references \cite{DU1977, FHHMZ2010}.

\noindent
{\bf Notation}. In the present paper, all the Banach spaces $X$ will be considered over the \emph{real} numbers. We denote by $B_X$ its closed unit ball and by $S_X$ its unit sphere. We also denote by $X^*$ the topological dual of $X$. If $x \in X$ and $\e>0$, we denote by $B_\eps(x)$ the open ball of radius $\e$ given by $\{y \in X: \|x-y\| < \e\}$. Given a finite set $\{x_1, \ldots, x_n\} \subseteq X$, the symbol $B_{\e}(x_1, \ldots, x_n)$ stands for the union of open balls of radius $\e$ centered at the points $x_1, \ldots, x_n$, i.e., $B_{\e}(x_1, \ldots, x_n) = \bigcup_{i=1}^n B_{\e}(x_i)$. We denote by $\diam A$ the diameter of a subset $A$. The symbols $\co(A)$ and $\overline{\co}(A)$ stand for the convex hull and closed convex hull of $A$, respectively. We denote by $\mathcal{L}(X,Y)$ the Banach space of all bounded linear operators from $X$ into $Y$.

Let $G$ be a topological group, which we always assume to be Hausdorff. We denote by $1_G \in G$ the identity element of the group $G$. On a few occasions, we shall employ the Haar measure, which exists if and only if the group is locally compact, and which is the unique (up to a scalar multiple) non-trivial Radon Borel measure on the group which is invariant under left multiplication. The monograph \cite{HofMorbook} serves as our reference for topological groups, to which we refer for any unexplained notation. Given a Banach space $X$, an important example of a topological group is $\Iso(X)$, the group of all linear isometries equipped with the strong operator topology (SOT, for short), which is the topology of pointwise convergence in norm.

We shall denote the actions by the symbol $\alpha:G\curvearrowright X$ or simply by $G\curvearrowright X$, when there is no risk of confusion. For $g\in G$ and $x\in X$, we write $\alpha(g,x)\in X$ for the application of the element $g$ to $x$ under the action $\alpha$. As before when no confusion can arise, we omit the symbol $\alpha$ and simply write $g\cdot x$, or even $gx$, instead of $\alpha(g,x)$. For subsets $F\subseteq G$ and $A\subseteq X$ we write $F\cdot A$ to denote the set $\{f\cdot a\colon f\in F, a\in A\}$. All group actions considered in this paper are continuous. If $X$ is a Banach space, which will be the most common case, continuity of the action refers to the continuity of the map $G\times X\to X$, or equivalently, to the continuity of the corresponding homomorphism from $G$ into $\Iso(X)$. In case $X$ is a measure space, the continuity of the action is defined in the appropriate sense for that context.

Throughout the text, when convenient, we refer to a \emph{$G$-Banach space} as a Banach space endowed with a continuous action of a topological group $G$ by linear isometries.

Let $X$ and $Y$ be Banach spaces, and let $G$ be a topological group acting continuously by linear isometries on $X$ and $Y$. We say that a linear operator $T:X\to Y$ is \emph{$G$-equivariant}, or just \emph{equivariant}, if 
\[
T(g\cdot x)=g\cdot T(x), \,\, \text{for all $g\in G$ and $x\in X$}.
\]
We denote by $\mathcal{L}^G(X,Y)$ the Banach space of all $G$-equivariant bounded linear operators from $X$ into $Y$. In the language of representation theory, such operators are often referred to as \emph{intertwining operators}. Finally, a subset $C$ of a $G$-Banach space is said to be \emph{$G$-invariant} if $g\cdot C \subseteq C$ for every $g \in G$.

\section{Definitions, basic properties and examples} \label{section:definitions}

In this section, we introduce the central definitions of the paper and show their basic properties. More specifically, we focus on the $G$-versions of dentability, Radon-Nikodým, Bishop-Phelps, Krein-Milman, and Lindenstrauss' property A.

\subsection{$G$-Bishop-Phelps property}

Recall that a nonempty bounded closed subset $C$ of a Banach space $X$ is said to have the {\it Bishop-Phelps property} (BP, for short) whenever given any Banach space $Y$, any bounded linear operator $T \in \mathcal{L}(X,Y)$ and any $\delta>0$, there exists $S \in \mathcal{L}(X,Y)$ such that $\|S - T\| < \delta$ and $\sup \{ \|S(x)\|: x \in C \}$ is attained. We will say then that the Banach space $X$ has the BP if every bounded closed absolutely convex subset of $X$ has the BP.

Bourgain's result (see \cite[Proposition 1]{Bourgain1977}) states that if $X$ has the Bishop-Phelps property then it is dentable (recall its definition at the beginning of \Cref{Subsection-dentability} below). One of our goals here is to obtain an equivariant version of this result (see \Cref{thm:G-BP_implies_G-dentable} below) by putting some conditions on the group $G$ such that an analogous conclusion holds true for $G$-equivariant operators. We consider then the following natural definition.

\begin{definition}[$G$-Bishop-Phelps property] \label{definition:G-BP} Let $G$ be a topological group, and let $C$ be a bounded closed absolutely convex $G$-invariant subset of a $G$-Banach space $X$. We say that $C$ has the {\it $G$-Bishop-Phelps property} ($G$-BP, for short) whenever, for every $G$-Banach space $Y$, for any $G$-equivariant operator $T \in \mathcal{L}^G(X,Y)$ and $\delta > 0$, there exists another $G$-equivariant operator $T_{\delta} \in \mathcal{L}^G(X,Y)$ such that $\|T_{\delta} - T\| < \delta$ and $\sup \{ \|T_{\delta}(x)\|: x \in C \}$ is attained. We will say that $X$ has the $G$-BP if every bounded closed absolutely convex $G$-invariant subset of $X$ has the $G$-BP.
\end{definition}

A natural choice is to consider $C = B_X$ for which we obtain the following specialized definition--one of the original motivations for our research. Before introducing this new definition, let us provide some notation. Let $X$ and $Y$ be $G$-Banach spaces. We denote by $\NA^G(X, Y)$ the set of all $G$-equivariant norm-attaining operators $T \in \mathcal{L}^G(X, Y)$, that is, those for which there exists $x \in B_X$ satisfying $\|Tx\| = \|T\|$.

\begin{definition}\label{def:G-propertyA} Let $G$ be a topological group. A $G$-Banach space $X$ is said to have \emph{$G$-property A} (in the sense of Lindenstrauss) if for every $G$-Banach space $Y$, the set $\NA^G(X,Y)$ is dense in $\mathcal{L}^G(X,Y)$.
\end{definition}

\hypertarget{GBPnotimpliesBP}{
Let us remark that it is not at all clear whether the classical Bishop-Phelps property implies the $G$-Bishop-Phelps property. However, we do know that the converse is not true since $L_1$ has the $G$-BP, for $G=\Iso(L_1)$, by \Cref{cor:ATimpliesG-BP} below, while $L_1$ does not have the BP as $L_1$ is clearly not dentable.
}

\subsection{$G$-dentability and weaker notions} \label{Subsection-dentability} In this subsection, we introduce our $G$-versions of dentability. Recall that a subset $C \subseteq X$ of a Banach space $X$ is said to be dentable if for every $\varepsilon > 0$, there exists a point $x \in C$ such that $x \not\in \overline{\co}(C \setminus B(x, \varepsilon))$. The Banach space $X$ is said to be dentable if every nonempty bounded subset of $X$ is dentable.

If $X$ is a $G$-Banach space for some topological group $G$, then for every $C \subseteq X$, we denote by $\clco_G (C)$ the smallest closed convex $G$-invariant subset of $X$ containing the subset $C$.

We introduce the following $G$-version of dentability.

\begin{definition}\label{def:G-dent_new} Let $G$ be a topological group. 
    Given a $G$-invariant subset $C$ of a $G$-Banach space $X$, we say that $C \subseteq X$ is \textit{$G$-dentable} if for every $\varepsilon > 0$, there exists a finite set $\{x_1, \ldots, x_n\} \subseteq C$ such that one can find an element $x \in C$ such that
\begin{equation*} 
x \notin \overline{\co}_G\big(C \setminus B_\varepsilon(x_1, \ldots, x_n)\big).
\end{equation*}
\end{definition}

Recall from Huff and Morris \cite{HuffMorris1976} that a subset $C$ of $X$ is dentable if and only if it is $\{1_G\}$-dentable in the sense of \Cref{def:G-dent_new}. This geometric characterization of dentability, due to Huff and Morris, is in fact the main motivation for our definition of $G$-dentability for subsets. 
Observe that for any topological group $G$ and $G$-invariant subset $C \subseteq X$, $G$-dentability of $C$ implies its classical dentability.

Let us collect two more equivalent but formally weaker definitions of $G$-dentability which will be more convenient to verify in later proofs. The following lemma is a purely notational equivalence and for this reason we omit its proof.

\begin{lemma} \label{fact:equivDefOfStrongGDentability} Let $G$ be a topological group and let $X$ be a $G$-Banach space. Let $C \subseteq X$ be a $G$-invariant subset. The following conditions are equivalent.
	\begin{enumerate}
		\item for every $\eps > 0$, there is a finite set $\{x_1,\ldots, x_n\} \subseteq C$ such that there exists $x \in C$ with $x \notin \clco_G ( C \setminus B_\eps (x_1,\ldots, x_n))$.
		\item for every $\eps > 0$, there is a finite set $\{x_1,\ldots, x_n\} \subseteq X$ such that there exists $x \in C$ with $x \notin \clco_G ( C \setminus B_\eps (x_1,\ldots, x_n))$.
		\item for every $\eps > 0$, there exist a finite family of subsets $A_1, \dots, A_n \subseteq X$ with $\diam A_i < \eps$ for every $i$ and $x \in C$ such that $x \notin \clco_G ( C \setminus \bigcup_{i=1}^n A_i)$.
	\end{enumerate}
\end{lemma}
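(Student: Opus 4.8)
The plan is to prove the chain of implications $(1) \Rightarrow (2) \Rightarrow (3) \Rightarrow (1)$, where only $(1) \Rightarrow (2)$ requires a genuine idea and the other two are essentially unwrapping definitions.

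\medskip

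\noindent\textbf{$(2) \Rightarrow (3)$.} This is immediate: given $\{x_1, \dots, x_n\} \subseteq X$ witnessing $(2)$ for some $\eps > 0$, apply $(2)$ with $\eps/2$ to obtain points $x_1, \dots, x_n$ and $x \in C$ with $x \notin \clco_G(C \setminus B_{\eps/2}(x_1, \dots, x_n))$, and set $A_i := B_{\eps/2}(x_i)$, so that $\diam A_i \leq \eps < \eps$ after a cosmetic adjustment (replace $\eps/2$ by $\eps/3$ if a strict inequality on the diameter is wanted). Since $B_{\eps/2}(x_1, \dots, x_n) = \bigcup_{i=1}^n A_i$, the conclusion of $(3)$ holds with the same $x$.

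\medskip

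\noindent\textbf{$(3) \Rightarrow (1)$.} Suppose $(3)$ holds and fix $\eps > 0$; choose sets $A_1, \dots, A_n \subseteq X$ with $\diam A_i < \eps$ and $x \in C$ with $x \notin \clco_G(C \setminus \bigcup_{i=1}^n A_i)$. We may discard any $A_i$ disjoint from $C$, so assume each $A_i$ meets $C$ and pick $x_i \in A_i \cap C$. Then for $y \in A_i$ we have $\|y - x_i\| \leq \diam A_i < \eps$, hence $A_i \subseteq B_\eps(x_i)$, and therefore $\bigcup_{i=1}^n A_i \subseteq B_\eps(x_1, \dots, x_n)$. Consequently $C \setminus B_\eps(x_1, \dots, x_n) \subseteq C \setminus \bigcup_{i=1}^n A_i$, so $\clco_G(C \setminus B_\eps(x_1, \dots, x_n)) \subseteq \clco_G(C \setminus \bigcup_{i=1}^n A_i)$, which does not contain $x$. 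Since $\{x_1, \dots, x_n\} \subseteq C$, this is exactly $(1)$.

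\medskip

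\noindent\textbf{$(1) \Rightarrow (2)$: the main point.} Here the difficulty is that in $(2)$ the balls are centered at arbitrary points of $X$, so we must show that allowing centers outside $C$ does not actually help. Fix $\eps > 0$ and let $\{x_1, \dots, x_n\} \subseteq X$ and $x \in C$ witness $(2)$ for the parameter $\eps/3$ (the shrinkage factor will be used below). We may again assume each ball $B_{\eps/3}(x_i)$ intersects $C$: if $B_{\eps/3}(x_i) \cap C = \emptyset$ then removing $x_i$ only enlarges the set $C \setminus B_{\eps/3}(x_1, \dots, x_n)$, but this enlargement is still disjoint from $B_{\eps/3}(x_i) \cap C = \emptyset$, so it changes nothing; hence dropping such indices preserves the witnessing property. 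Now for each surviving $i$ pick $c_i \in B_{\eps/3}(x_i) \cap C$. The key geometric observation is that $B_{\eps/3}(x_i) \subseteq B_{2\eps/3}(c_i) \subseteq B_\eps(c_i)$, because any $y$ with $\|y - x_i\| < \eps/3$ satisfies $\|y - c_i\| \leq \|y - x_i\| + \|x_i - c_i\| < \eps/3 + \eps/3 < \eps$. Therefore $B_{\eps/3}(x_1, \dots, x_n) \subseteq B_\eps(c_1, \dots, c_n)$, whence $C \setminus B_\eps(c_1, \dots, c_n) \subseteq C \setminus B_{\eps/3}(x_1, \dots, x_n)$, and applying $\clco_G$ we get
\[
\clco_G\big(C \setminus B_\eps(c_1, \dots, c_n)\big) \subseteq \clco_G\big(C \setminus B_{\eps/3}(x_1, \dots, x_n)\big),
\]
which does not contain $x$. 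Since $\{c_1, \dots, c_n\} \subseteq C$, this establishes $(1)$ for the parameter $\eps$. As $\eps > 0$ was arbitrary, $(1)$ holds; and since $(1) \Rightarrow (2)$ trivially by enlarging the set of admissible centers, the three conditions are equivalent. I expect essentially no obstacle beyond bookkeeping the $\eps/3$ factors; the only thing to be careful about is the monotonicity of $\clco_G$ (if $A \subseteq B$ then $\clco_G(A) \subseteq \clco_G(B)$), which is immediate since $\clco_G(B)$ is a closed convex $G$-invariant set containing $A$, hence contains the smallest such set $\clco_G(A)$.
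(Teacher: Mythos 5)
Your proof is correct and takes essentially the same route as the paper: the only nontrivial step in both is passing from arbitrary sets/centers back to centers lying in $C$ (discard the pieces missing $C$, pick a point of $C$ in each remaining piece, and use the triangle inequality together with monotonicity of $\clco_G$), which is exactly the paper's $(3)\Rightarrow(1)$ argument. The one blemish is a label swap: your final section is headed ``$(1)\Rightarrow(2)$'' but actually proves $(2)\Rightarrow(1)$; since you separately note that $(1)\Rightarrow(2)$ is trivial and you already have $(2)\Rightarrow(3)\Rightarrow(1)$, the equivalence still closes and that direct argument is merely redundant.
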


Let us now present natural group-invariant analogues of the notion of dentability for Banach spaces.

\begin{definition}[$G$-dentabilities for Banach spaces]\label{def:dentable_banach_spaces}
Let $G$ be a topological group and let $X$ be a $G$-Banach space. We say that $X$ is
    \begin{enumerate}
        \itemsep0.25em
        \item \textit{strongly $G$-dentable} if every bounded $G$-invariant subset $C$ of $X$ is $G$-dentable in the sense of \Cref{def:G-dent_new}.
        
        \item \textit{weakly $G$-dentable} if every bounded $G$-invariant subset $C$ of $X$ is dentable in the classical sense.      
    \end{enumerate}
\end{definition}

\begin{remark} \label{remark:G-dentability}   We make the following observations concerning \Cref{def:dentable_banach_spaces}. See also Diagram~\ref{diagram-dentability1} below.
\begin{itemize} 
\itemsep0.25em
\item[(a)] Strong $G$-dentability immediately implies weak $G$-dentability.

\item[(b)] Weak $G$-dentability is in fact equivalent to classical dentability for $G$-Banach spaces. See \Cref{prop:weakly-G-dentable-and-dentable-are-equivalent} and the preceding discussion.

\item[(c)] Strong $G$-dentability is a stronger condition that forces the acting group to be compact, or more precisely, precompact. See \Cref{fact:strongG-dent}.

\end{itemize}
\end{remark}

\begin{remark}
One could also introduce an even weaker notion of $G$-dentability for a $G$-Banach space $X$ by requiring that every bounded $G$-invariant subset $C$ of $X$ satisfy the following condition: for every $\varepsilon>0$, there is $x \in C$ such that 
\[
x \not\in \clco (C\setminus G\cdot B_\varepsilon (x)).
\]
This notion, however, turns out to be too weak. Indeed, every $G$-Banach space $X$ on which $G$ acts almost transitively is $G$-dentable in this sense. In particular, $L_1$ is $G$-dentable with respect to the action of $G=\Iso(L_1)$.

To see this, let $C \subseteq X$ be bounded and $G$-invariant and let $\eps > 0$ be arbitrary. Put $M = \sup \{ \|y\| : y \in C\}$. If $M=0$, then $C = \{0\}$ and the conclusion is immediate by taking $x=0$. Assume therefore that $M > 0$. Choose $0 < \delta < \min \{ \e, M \}$
    and find $x \in C$ such that $\|x\| > M - \delta$. We claim that 
    \[
    \clco (C \setminus G \cdot B_\eps(x)) \subseteq \overline{B_{M - \delta}(0)}
    \]
    which immediately implies $x \notin \clco (C \setminus G \cdot B_\eps(x))$.
    
    Indeed, let $y \in C$ be such that $\|y\| > M - \delta$.  Then both $x$ and $y$ are nonzero. Moreover, $|\|x\|-\|y\|| < \delta < \e$. Set $\eta := \eps - \big|\|x\| - \|y\|\big| >0$. Since the action of $G$ on $X$ is almost transitive, there is $g \in G$ such that
    \begin{equation*}
        \left\| g \cdot x - \frac{\|x\|}{\|y\|}y \right\| < \eta.
    \end{equation*}
    Thus, we also have
    \begin{equation*}
        \| g\cdot x - y \|
        \leq \left\| g\cdot x - \frac{\|x\|}{\|y\|}y \right\| + \left\| \frac{\|x\|}{\|y\|}y - y \right\|
        < \eta + \big|\|x\| - \|y\|\big| = \eps.
    \end{equation*}
    Thus, $y \in G\cdot B_\varepsilon (x)$. Consequently, $C \setminus G \cdot B_\eps(x) \subseteq B_{M-\delta } (0)$ and the same inclusion holds for its closed convex hull, completing the proof of the claim. 
\end{remark}

Although simple, the following proposition plays a crucial role in relating several equivariant notions to their classical counterparts. It has a number of important consequences, such as \Cref{cor:G-BPimpliesBP} and \Cref{cor:G-RNPisRNP}, which we do not know how to prove directly. Instead, their proofs proceed through a sequence of intermediate implications in which the proposition below serves as the essential bridge between the classical and equivariant settings. For this reason, despite its equivalence with classical dentability, we shall continue to use the term ``weak $G$-dentability'' throughout the remainder of the paper.

\begin{proposition} \label{prop:weakly-G-dentable-and-dentable-are-equivalent}  Let $G$ be any topological group and $X$ a $G$-Banach space. Let $D \subseteq X$ be a nonempty bounded subset. If $D$ is not dentable, then $G \cdot D$ is not dentable. In particular, $X$ is weakly $G$-dentable if and only if $X$ is dentable.
\end{proposition}

\begin{proof} Since $D$ is not dentable, there exists $\e_0 > 0$ such that $x \in \overline{\co}(D \setminus B(x, \e_0))$ for every $x \in D$. We will show that the same $\varepsilon_0$ witnesses the non-dentability of $G\cdot D$. First, notice that $G \cdot D$ is bounded. Let $y \in G \cdot D$ and write $y= g\cdot x$ for some $g \in G$ and $x \in D$. Then, $x \in \overline{\co}(D \setminus B_{\e_0} (x))$. Applying $g$, we obtain that 
\begin{equation*}
    y = g \cdot x \in g \cdot \overline{\co}(D \setminus B_{\e_0} (x)) = \overline{\co} (g \cdot (D \setminus B_{\e_0} (x)).
\end{equation*}
Since  
\[
g \cdot (D \setminus B_{\e_0} (x)) = (g \cdot D )\setminus B_{\e_0}(g \cdot x) = (g \cdot D) \setminus B_{\e_0}(y),
\]
it follows that $y \in \overline{\co}( (g \cdot D )\setminus B_{\e_0}(y)) \subseteq \clco ( (G \cdot D) \setminus B_{\e_0}(y) ) $. Since $y\in G\cdot D$ was arbitrary, $G \cdot D$ is not dentable.

To prove the ``In particular'' statement, it suffices to show that weak $G$-dentability implies classical dentability. Let $D\subseteq X$ be a nonempty bounded subset. We have just shown that if $D$ is not dentable, then neither is $G\cdot D$. Since $G\cdot D$ is a bounded $G$-invariant subset of $X$, it follows that if $X$ is weakly $G$-dentable, then $G\cdot D$ is dentable. Therefore, $D$ must be dentable.
\end{proof}

Now we explain \Cref{remark:G-dentability} (c). First, recall that a topological group $G$ is \textit{precompact} if for every neighborhood $U$ of $1_G \in G$, there exists a finite set $F_U$ such that 
 \[
 G \subseteq \bigcup_{g \in F_U} g\cdot  U.
 \]
Equivalently, the completion by its left uniformity is compact.

The assumption in \Cref{fact:strongG-dent} below that the action is \emph{faithful}, i.e., that for every $g\in G\setminus \{1_G\}$, there exists $x\in X$ such that $g\cdot x\neq x$, is not restrictive. In fact, if the action is not faithful, then the set $N:=\{g\in G\colon g\cdot x = x, \, \forall x\in X \}$ is a non-trivial closed normal subgroup of $G$. In this case, the action of $G$ on $X$ factorizes through the natural action of $G/N$ on $X$ given by 
 \[
 gN \cdot x = g\cdot x \quad \text{($g \in G$, $x \in X$)},
 \]
 which is faithful. 

\begin{fact}\label{fact:strongG-dent}
Let $X$ be a Banach space equipped with a faithful action of a topological group $G$ by linear isometries. Suppose that $X$ is strongly $G$-dentable. Then, there exists a coarser group Hausdorff topology $\tau$ on $G$ such that $(G,\tau)$ is precompact and such that the action is still continuous.    
\end{fact}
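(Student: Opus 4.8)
The plan is to manufacture the coarser topology $\tau$ directly from the action on a suitable bounded $G$-invariant set that witnesses strong $G$-dentability, and then to show that $G$-dentability of that set forces the orbit closures to be "small", which is exactly precompactness after quotienting by the kernel. First I would fix a faithful action and pick a point $x_0 \in X$ whose orbit $G\cdot x_0$ separates enough of $G$; since the action is faithful one can choose $x_0$ (or rather a sequence, or a single point if $X$ is e.g. separable — but in general one should work with the whole space) so that the map $g \mapsto (g\cdot x)_{x\in X}$ is injective. Let $\tau$ be the topology on $G$ induced by this map from the product of the norm topologies, i.e. the coarsest group topology making all the orbit maps $g\mapsto g\cdot x$ norm-continuous; equivalently, $\tau$ is the topology $G$ inherits as a subgroup of $\prod_{x\in X}(X,\|\cdot\|)$ via $g\mapsto (gx)_x$, which is Hausdorff precisely because the action is faithful. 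By construction $\tau$ is coarser than (or equal to) the given topology — the given action is continuous, so each orbit map is already continuous — and the action $G\times X\to X$ remains (separately, hence by the standard argument jointly on bounded sets) continuous for $\tau$; actually one only needs separate continuity plus the fact that $\|g\cdot x - x\|$ is what controls things, so I would just check that $G\times X\to X$ is $\tau$-continuous, which is immediate from the definition of $\tau$ as an initial topology.

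The heart of the matter is showing $(G,\tau)$ is precompact. I would argue by contradiction: if $(G,\tau)$ is not precompact, then there is a $\tau$-neighborhood $U$ of $1_G$ with no finite set $F$ satisfying $G\subseteq FU$. Basic $\tau$-neighborhoods of $1_G$ have the form $U = \{g : \|g\cdot x_i - x_i\| < \eta,\ i=1,\dots,k\}$ for finitely many $x_1,\dots,x_k\in X$ and $\eta>0$. Failure of precompactness then yields, by a standard packing argument, an infinite set $\{g_n\}_{n\in\N}\subseteq G$ that is "$U$-separated" in the sense that $g_n^{-1}g_m \notin U$ for $n\neq m$, i.e. for each pair $n\neq m$ there is some index $i$ with $\|g_n\cdot x_i - g_m\cdot x_i\|\geq \eta$ (using that isometries preserve the relevant distances). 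Now form the bounded $G$-invariant set $C := \cconv_G\big(\{g\cdot x_i : g\in G,\ i=1,\dots,k\}\big)$, or more simply the closed absolutely convex $G$-invariant hull of $\{x_1,\dots,x_k\}$; this is bounded since the $x_i$ have bounded norm and the action is isometric. The points $y_n := $ (a suitable fixed convex combination or just $g_n\cdot x_{i(n)}$ arranged into a single $X$-valued sequence) then form an infinite $\eta$-separated subset of $C$ on which $G$ acts. The key claim is that such a $C$ cannot be $G$-dentable: for the $\eps = \eta/4$ (say), no finite union $B_\eps(z_1,\dots,z_m)$ can "peel off" a point, because the $G$-orbit of any candidate extreme-like point $x$ meets $C\setminus B_\eps(z_1,\dots,z_m)$ in a set whose closed convex $G$-hull still contains $x$ — here one exploits that the infinitely many $\eta$-separated translates $g_n\cdot(\text{stuff})$ cannot all be covered by finitely many $\eps$-balls, so after removing $B_\eps(z_1,\dots,z_m)$ infinitely many translates of $x$ survive, and averaging over a growing set of these translates (using invariance of $\clco_G$) reproduces $x$ in the closed convex $G$-hull. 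This contradicts strong $G$-dentability.

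The main obstacle I anticipate is the last claim — carefully constructing a single bounded $G$-invariant set $C$ and a configuration of points inside it for which $G$-dentability genuinely fails, and making the "averaging over translates recovers $x$" step rigorous. The subtlety is that $\clco_G$ involves the $G$-action, so removing an $\eps$-ball around the $x_i$ only removes a "small" piece but $G$ can move other pieces of $C$ back near $x$; one has to ensure that after removing $B_\eps(z_1,\dots,z_m)$ the remaining set $C\setminus B_\eps(z_1,\dots,z_m)$ still has full $G$-invariant closed convex hull, which is where the infinitude and $\eta$-separation of the $g_n$-translates, not coverable by finitely many $\eps$-balls, is essential. A clean way to package this is: if $(G,\tau)$ were not precompact, exhibit a bounded $G$-invariant $C$ and $\eps>0$ such that for \emph{every} finite $\{z_1,\dots,z_m\}\subseteq X$ one has $C\subseteq \clco_G(C\setminus B_\eps(z_1,\dots,z_m))$, i.e. $C$ is not $G$-dentable at scale $\eps$ — the natural candidate being $C$ = closed absolutely convex $G$-invariant hull of a single orbit-segment, for which $C\setminus B_\eps(z_1,\dots,z_m)$ still contains cofinitely many of the $\eta$-separated translates, whence its $\clco_G$ swallows the whole orbit and hence all of $C$. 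Once this is in place, strong $G$-dentability of $X$ is contradicted, so $(G,\tau)$ must be precompact, completing the proof. Finally, if one has instead quotiented by the (trivial, by faithfulness) kernel there is nothing further to do; the Hausdorff requirement is handled at the construction stage as noted.
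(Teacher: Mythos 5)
Your overall strategy --- reducing precompactness of the induced SOT topology to total boundedness of orbits and deriving the latter from $G$-dentability --- is the right one, and the ``clean packaging'' in your last paragraph is essentially the paper's argument. However, as written there are two genuine gaps. First, the separation you extract from non-precompactness lives in $X^k$, not in $X$: the condition $g_n^{-1}g_m\notin U$ only gives, for each pair $n\neq m$, \emph{some} index $i$ (depending on the pair) with $\|g_nx_i-g_mx_i\|\geq\eta$. Your proposed sequence $y_n=g_n\cdot x_{i(n)}$, or a fixed convex combination of the $g_nx_i$, need not be $\eta$-separated in $X$ (for a convex combination the triangle inequality goes the wrong way, since the vector differences can cancel). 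To obtain an infinite $\eta$-separated subset of a \emph{single} orbit you need an extra step: either Ramsey's theorem for pairs coloured by the witnessing index, or, more simply, the observation that if every orbit $Gx_i$ were totally bounded then the diagonal orbit of $(x_1,\dots,x_k)$ in $X^k$ would be totally bounded and $G\subseteq FU$ would follow, so some single orbit must fail to be totally bounded. Without this reduction, the claim that ``cofinitely many of the $\eta$-separated translates'' survive the removal of finitely many $\eps$-balls is unjustified.

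Second, the ``averaging over a growing set of translates reproduces $x$'' mechanism, which you yourself flag as the main obstacle, is indeed not a workable argument --- and it is also unnecessary. The paper's proof is far more direct: for any $x\in X$ the orbit $Gx$ is itself a bounded $G$-invariant set, hence $G$-dentable; but for \emph{every} nonempty subset $D\subseteq Gx$ one has $Gx\subseteq\clco_G(D)$, since $\clco_G(D)$ is $G$-invariant and meets the orbit, hence contains all of it. Consequently the only way the dentability condition can hold for $Gx$ is that $Gx\setminus B_\eps(x_1,\dots,x_n)=\emptyset$, i.e.\ the orbit admits a finite $\eps$-net. No contradiction setup, no auxiliary multi-orbit convex hull $C$, and no covering or packing estimate is needed; total boundedness of all orbits then yields precompactness of the induced topology directly (the paper additionally upgrades this to compactness of the closure of the image of $G$ in $\Iso(X)$ for separable $X$ via a diagonal argument). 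I would discard the multi-orbit hull and the averaging step entirely and argue orbit by orbit as above.
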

 
\begin{proof} 
 Since the action is faithful, we can identify $G$ with $\phi[G]$, where $\phi$ is the natural continuous homomorphism $\phi:G\to\Iso(X)$. Identifying $G$ with $\phi[G]$ naturally induces a topology $\tau_\phi$ on $G$, where a net $g_\alpha \to g$ in $(G,\tau_\phi)$ if and only if $\phi(g_\alpha) \to \phi(g)$ in $(\Iso(X), \text{SOT})$, which is coarser than the originally given topology on $G$ (by continuity of $\phi$). 
 We claim that $\phi [G]$ is precompact in $\Iso (X)$.
 
 First notice that every orbit of $G$ is totally bounded. Pick any $x\in X$. Since by the assumption the orbit $Gx$ is $G$-dentable, it follows that, for every $\varepsilon>0$, there are $x_1,\ldots,x_n\in Gx$ such that 
 \[
 Gx\not\subseteq \clco_G  ( Gx\setminus B_\varepsilon(x_1,\ldots,x_n) ).
 \] However, for every nonempty subset $C\subseteq Gx$, we have $Gx\subseteq\clco_G C$, which implies that $Gx\setminus B_\varepsilon(x_1,\ldots,x_n)$ must be empty. Thus, the orbit $Gx$ has a finite $\varepsilon$-net. Since $\varepsilon>0$ was arbitrary, it follows that $Gx$ is totally bounded; equivalently, precompact (as the ambient space $X$ is a complete metric space).

Now we prove the claim. Let $U$ be an arbitrary open neighborhood of $\id_X$. Without loss of generality, we may assume that $U=\{f\in \Iso(X)\colon \|f(x_i)-x_i\|<\varepsilon, \forall i\leq n\}$ for some $n \in \N$, $\varepsilon>0$ and $(x_i)_{i=1}^n\subseteq X$. 

For each $i\leq n$, the orbit $G\cdot x_i$ is totally bounded, so there exists a finite set $F_i\subseteq G$ such that for all $g\in G$ there is $f\in F_i$ such that $\|fx_i-gx_i\|<\varepsilon/2$. For each $(f_i)_{i=1}^n\in \prod_{i=1}^n F_i$ define 
\[
\mathcal{F}_{(f_i)_i}:=\{g\in G\colon \|gx_i-f_i x_i\|<\varepsilon/2 \;\;\forall i\leq n\}.\] 
If $\mathcal{F}_{(f_i)_i}\neq\emptyset$, then choose $g_{(f_i)_i}\in \mathcal{F}_{(f_i)_i}$, otherwise let $g_{(f_i)_i}=1_G$. Set 
\[
F:= \left\{ g_{(f_i)_i} : (f_i)_i\in \prod_{i=1}^n F_i \right\},
\]
which is finite. We show that $G\subseteq F\cdot U$. Indeed, let $g\in G$. By construction, there exists $(f_i)_i\in\prod_{i=1}^n F_i$ such that $g\in\mathcal{F}_{(f_i)_i}$. Hence, for every $i\leq n$
\[\|gx_i-g_{(f_i)_i} x_i\|\leq \|gx_i- f_i x_i\|+\|f_i x_i-g_{(f_i)_i} x_i\|<\varepsilon/2+\varepsilon/2=\varepsilon.
\] Therefore, $g_{(f_i)_i}^{-1}g\in U$; hence 
\[
g \in g_{(f_i)_i} U \subseteq F\cdot U.
\]
Since $g\in G$ was arbitrary, we conclude that $G\subseteq F\cdot U$, as required.
\end{proof}

The argument of \Cref{fact:strongG-dent} can be used to distinguish between weak and strong $G$-dentabilities, as follows.

\begin{corollary} \label{cor:strongGdentIsStrong}
Weak $G$-dentability, and therefore classical dentability, does not imply strong $G$-dentability. For instance, for $1<p<\infty$, $L_p$ is not strongly $G$-dentable with respect to $G=\Iso(L_p)$. 
\end{corollary}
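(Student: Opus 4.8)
The plan is to treat the two claims of the corollary separately. For the positive part, I would observe that $L_p$ with $1<p<\infty$ is reflexive, hence has the RNP, and is therefore dentable in the classical sense (one of the classical equivalences recalled in \Cref{section:prelim}; alternatively, uniform convexity of $L_p$ produces slices of arbitrarily small diameter directly). By item (a) of \Cref{remark:G-dentability}, classical dentability implies weak $G$-dentability for every topological group $G$ acting on $L_p$ by linear isometries, in particular for $G=\Iso(L_p)$. So $L_p$ is both dentable and weakly $G$-dentable, and it only remains to show that it fails to be strongly $G$-dentable for $G=\Iso(L_p)$.

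For the negative part I would reuse the first step of the proof of \Cref{fact:strongG-dent}: if a $G$-Banach space $X$ is strongly $G$-dentable, then for every $x\in X$ the orbit $G\cdot x$ is a bounded (because $G$ acts isometrically) $G$-invariant subset of $X$, hence $G$-dentable; and since any nonempty subset $D\subseteq G\cdot x$ already satisfies $\clco_G(D)\supseteq G\cdot x$, $G$-dentability of the orbit forces the existence, for each $\varepsilon>0$, of a finite $\varepsilon$-net of $G\cdot x$. Thus strong $G$-dentability makes every orbit totally bounded, and it suffices to produce a single point of $L_p$ whose orbit under $\Iso(L_p)$ is not totally bounded.

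Concretely, I would take $L_p=L_p[0,1]$ (the same argument runs in any atomless $L_p(\mu)$) and $x=\mathbf{1}_{[0,1/2]}$. For every measure-preserving automorphism $\sigma$ of $[0,1]$ the composition operator $U_\sigma\colon f\mapsto f\circ\sigma$ is a linear isometry of $L_p$ with $U_\sigma(x)=\mathbf{1}_{\sigma^{-1}([0,1/2])}$; since the measure algebra of $[0,1]$ is homogeneous, every measurable $A\subseteq[0,1]$ with $|A|=1/2$ equals $\sigma^{-1}([0,1/2])$ for a suitable $\sigma$, so the orbit $\Iso(L_p)\cdot x$ contains $\{\mathbf{1}_A : |A|=1/2\}$. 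Choosing sets $A_n$ of measure $1/2$ built from independent fair coin-flips (e.g.\ $A_n=\{r_n=1\}$ for the Rademacher functions $r_n$), one gets $|A_n\triangle A_m|=1/2$ for $n\neq m$, hence $\|\mathbf{1}_{A_n}-\mathbf{1}_{A_m}\|_p=|A_n\triangle A_m|^{1/p}=2^{-1/p}$ for all $n\neq m$; this $2^{-1/p}$-separated sequence lies in the orbit, which is therefore not totally bounded. Combined with the previous paragraph, $L_p$ is not strongly $G$-dentable for $G=\Iso(L_p)$, which simultaneously separates weak from strong $G$-dentability and classical dentability from strong $G$-dentability.

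The most delicate point, and the closest thing to a genuine obstacle, is checking that all indicators $\mathbf{1}_A$ with $|A|=1/2$ really belong to the orbit under the \emph{whole} isometry group: this rests on two standard facts, namely that composition with a measure-preserving automorphism is an $L_p$-isometry (valid for all $1\le p<\infty$) and that any two measurable subsets of $[0,1]$ of equal measure are exchanged by a measure-preserving automorphism of the whole space. Everything else is routine bookkeeping: reflexivity $\Rightarrow$ RNP $\Rightarrow$ dentability, the implication diagram in \Cref{remark:G-dentability}, the orbit argument copied verbatim from \Cref{fact:strongG-dent}, and the elementary norm computation for $\mathbf{1}_{A_n}-\mathbf{1}_{A_m}$.
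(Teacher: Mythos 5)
Your proof is correct, and the negative part takes a genuinely different (and more self-contained) route than the paper's. The paper argues by contraposition through the \emph{full} conclusion of \Cref{fact:strongG-dent}: strong $G$-dentability of $L_p$ would force $(\Iso(L_p),\mathrm{SOT})$ to be compact (via the diagonal argument and the fact that $\Iso(L_p)$ is Polish for separable $L_p$), and then it cites \cite[Proposition 3.2]{FerencziRosendal2013} for the non-compactness of $\Iso(L_p)$. You instead extract only the first, elementary step of that proof --- strong $G$-dentability makes every orbit totally bounded, since for any nonempty $D\subseteq G\cdot x$ one has $G\cdot x\subseteq\clco_G(D)$ --- and then exhibit a concrete non-totally-bounded orbit: the indicators $\mathbf{1}_{A}$ with $\lambda(A)=1/2$ all lie in $\Iso(L_p)\cdot\mathbf{1}_{[0,1/2]}$ because composition with measure-preserving automorphisms gives isometries and the Lebesgue measure algebra is homogeneous, and the Rademacher sets give an infinite $2^{-1/p}$-separated family in this orbit. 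This buys you a proof that avoids both the diagonal/Polish-group argument and the external reference, at the cost of the (standard but nontrivial) facts about measure-preserving automorphisms of $[0,1]$; the paper's version is shorter on the page but outsources more. Your positive part (reflexivity $\Rightarrow$ RNP $\Rightarrow$ dentability $\Rightarrow$ weak $G$-dentability) matches the paper's one-line remark. All steps check out, including the norm computation $\|\mathbf{1}_{A_n}-\mathbf{1}_{A_m}\|_p=\lambda(A_n\triangle A_m)^{1/p}=2^{-1/p}$.
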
 \label{counterexample-G-dentable}

\begin{proof}
If $L_p$ were strongly $G$-dentable with respect to $G=\Iso(L_p)$, then the proof of \Cref{fact:strongG-dent} would imply that $(\Iso(L_p), \text{SOT})$ is compact, which is not the case--a fact well known to experts, and easily deducible from, for instance, \cite[Proposition 3.2]{FerencziRosendal2013}. On the other hand, $L_p$ is dentable, thus it is also weakly $G$-dentable.
\end{proof}

In \Cref{section:G-BP}, we will also need the following auxiliary result, which will be used in the proof that the $G$-Bishop–Phelps property implies strong $G$-dentability.

\begin{lemma} \label{fact:sGdentabilityInheritance} Let $G$ be a topological group, $X$ be a $G$-Banach space, and let $D$ be a $G$-invariant subset of $X$. If $\clco(D)$ is $G$-dentable, then so is $D$.
\end{lemma}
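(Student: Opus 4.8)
The plan is to exploit that, for a $G$-invariant set $D$, the ordinary closed convex hull $\clco(D)$ is automatically $G$-invariant, hence coincides with $\clco_G(D)$, and then to transfer a $G$-dentability witness of $\clco(D)$ down to $D$ by halving the radius and replacing the ball-centers by nearby points of $D$. First I would check the $G$-invariance of $\clco(D)$: every $g \in G$ acts as a surjective linear isometry, so $g\cdot\co(D) = \co(g\cdot D) = \co(D)$, and $g\cdot\clco(D) = \clco(D)$ by continuity of $g$. Thus $\clco(D)$ is a closed convex $G$-invariant set containing $D$ and is contained in every such set, so $C := \clco(D) = \clco_G(D)$; by hypothesis $C$ is $G$-dentable.

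Next I would fix $\eps > 0$ and put $\eta = \eps/2$. Applying $G$-dentability of $C$ (in the form of \Cref{def:G-dent_new}, or equivalently one of the forms in \Cref{fact:equivDefOfStrongGDentability}), I obtain a finite set $\{y_1,\dots,y_m\} \subseteq X$ and a point $y_0 \in C$ with $y_0 \notin \clco_G\big(C \setminus B_\eta(y_1,\dots,y_m)\big)$. Since $\clco_G(D) = C$ contains $y_0$ while the set on the right does not, $D$ cannot be a subset of $\clco_G\big(C \setminus B_\eta(y_1,\dots,y_m)\big)$, for otherwise its $G$-invariant closed convex hull $C$ would be contained in it too. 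Hence there exists $x \in D$ with $x \notin \clco_G\big(C \setminus B_\eta(y_1,\dots,y_m)\big)$, and this $x$ will serve as the point witnessing $G$-dentability of $D$.

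Finally I would build the finite set of centers inside $D$. Let $J := \{\, j \leq m : B_\eta(y_j) \cap D \neq \emptyset \,\}$ and, for each $j \in J$, pick $x_j \in B_\eta(y_j) \cap D$; here $J \neq \emptyset$, since $J = \emptyset$ would force $D \subseteq C \setminus B_\eta(y_1,\dots,y_m)$ and hence $y_0 \in C = \clco_G(D) \subseteq \clco_G\big(C \setminus B_\eta(y_1,\dots,y_m)\big)$, a contradiction. Then $D \setminus B_\eps\big((x_j)_{j\in J}\big) \subseteq C \setminus B_\eta(y_1,\dots,y_m)$: indeed, if $z \in D$ and $\|z - y_j\| < \eta$ for some $j \leq m$, then $j \in J$ and $\|z - x_j\| \leq \|z - y_j\| + \|y_j - x_j\| < 2\eta = \eps$, so $z \in B_\eps\big((x_j)_{j\in J}\big)$. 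By monotonicity of $\clco_G(\cdot)$ this gives $\clco_G\big(D \setminus B_\eps((x_j)_{j\in J})\big) \subseteq \clco_G\big(C \setminus B_\eta(y_1,\dots,y_m)\big)$, which does not contain $x$; together with $x \in D$ and the finite nonempty set $\{x_j : j \in J\} \subseteq D$, this is exactly the condition of \Cref{def:G-dent_new} for $D$.

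I expect no genuine obstacle here: the only subtlety is that the witness $y_0$ and the centers $y_j$ supplied by dentability of $\clco(D)$ need not lie in $D$, and these two issues are precisely what the identity $\clco_G(D) = \clco(D)$ (used to move the separated point into $D$) and the halving $\eta = \eps/2$ (used to re-center the balls at points of $D$) take care of.
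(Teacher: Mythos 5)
Your proof is correct, and it takes a mildly but genuinely different route from the paper's. To produce a point of $D$ outside the relevant hull, the paper approximates the separated point $z \in \clco(D)$ by a convex combination $\sum_j \lambda_j x_j$ of elements of $D$ lying outside the bad set $C$ and argues by convexity of $C$ that some $x_j$ must fail to lie in $\clco_G\big(D \setminus B_\eps(y_1,\ldots,y_n)\big)$; you instead observe that $\clco_G(D)=\clco(D)$ for $G$-invariant $D$, so $D$ cannot be wholly contained in the closed convex $G$-invariant set $\clco_G\big(\clco(D)\setminus B_\eta(y_1,\ldots,y_m)\big)$ without forcing $y_0$ into it. Both arguments rest on the same underlying fact (any closed convex $G$-invariant set containing $D$ contains $\clco(D)$), but your version is more direct. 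The second difference is in handling centers that need not lie in $D$: the paper keeps the original radius and centers and appeals to condition (2) of \Cref{fact:equivDefOfStrongGDentability}, whereas you halve the radius and re-center at nearby points of $D$ so as to match \Cref{def:G-dent_new} literally. The re-centering is not strictly necessary --- after your third paragraph you could already invoke condition (2) of \Cref{fact:equivDefOfStrongGDentability} with $\eta$ in place of $\eps$ --- but it is harmless and makes the argument self-contained relative to the definition.
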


\begin{proof}
	Let $\eps > 0$ and find $z \in \clco (D)$ and $y_1, \dots y_n \in \clco (D)$ such that
	\begin{equation*}
		z \notin \clco_G \big( (\clco (D)) \setminus B_\eps(y_1, \dots, y_n) \big) =: C.
	\end{equation*}
	Let $U$ be an open neighborhood of $z$ such that $U \cap C = \emptyset$. Since $z \in \clco (D)$, $U \cap \co (D) \neq \emptyset$. Hence, there exists 
    \[
    x = \sum_{j=1}^m \lambda_j x_j \in \co (D)  \setminus C
    \]
    where $x_j \in D$ for $1\leq j \leq m$, for some $m \in \mathbb{N}$. For a~contradiction, assume that
	\begin{equation*}
		\forall 1 \leq j \leq m : x_j \in \clco_G \big( D \setminus B_\eps(y_1, \dots, y_n) \big).
	\end{equation*}
	Enlarging $D$ to $\clco (D)$, we get
	\begin{equation*}
		\forall 1 \leq j \leq m : x_j \in \clco_G \big( \clco (D) \setminus B_\eps(y_1, \dots, y_n) \big) = C.
	\end{equation*}
	But since $C$ is convex, we obtain that $x = \sum_{j=1}^m \lambda_j x_j \in C$ which is a~contradiction with the very choice of $x$. This means that for some $j \in \{1, \dots, m\}$, we have that
	\begin{equation*}
		x_j \notin \clco_G \big( D \setminus B_\eps(y_1, \dots, y_n) \big).
	\end{equation*}
It follows from \Cref{fact:equivDefOfStrongGDentability} (since $y_1,\ldots,y_n$ are not necessarily elements of $D$) that $D$ is $G$-dentable. 
\end{proof}

\begin{figure}[h] 
\centering
\begin{tikzpicture}[
		arrow/.style={-Implies, double equal sign distance, thick},
		equiv/.style={Implies-Implies, double equal sign distance, thick}, 
		box/.style={draw, rounded corners, minimum width=3.6cm, minimum height=1.2cm, align=center, font=\normalsize}
		]
		
		\matrix (m) [matrix of nodes, row sep=1.2cm, column sep=1.6cm, nodes={anchor=center}] {
			& \node[box] (strong) {Strong\\$G$-dentability}; \\
			\node[box] (classical) {Dentability}; & \node[box] (weak) {Weak\\$G$-dentability}; \\
		};
		
		\draw[arrow] (strong) -- (weak);       
		
		\draw[equiv] (classical) -- (weak);    
		\draw[arrow] 
		    ([yshift=0.12cm]weak.east)
		    to[out=20, in=-10, looseness=1.2]
		    node[midway,sloped,xshift=0.1cm]{\LARGE $\times$}
		    ([yshift=0.15cm]strong.east);

        \draw[arrow]
    (classical.north)
    to[out=85,in=180,looseness=0.9]
    node[midway]{\rotatebox{12}{\LARGE $\times$}}
    (strong.west);
		
	\end{tikzpicture}
\captionsetup{labelformat=simple, labelsep=colon, name=Diagram}
\caption{$G$-dentability concepts and their relations.}
\label{diagram-dentability1}
\end{figure}

\subsection{$G$-Krein-Milman property} In this subsection, we consider a version of the Krein-Milman property that is compatible with a given group action. In \Cref{subsection:strongly-G-dentable-G-KMP}, we will show its connection with the $G$-dentability of a Banach space $X$ (more specifically, see Theorem \ref{theorem:strong-G-dentable-implies-G-KMP}).

Recall that if $C$ is a nonempty convex subset of $X$, a point $x_0 \in C$ is an \emph{extreme point} of $C$ if $x_1 = x_2 = x_0$ whenever $x_1, x_2 \in C$ and $x_0$ can be written as $x_0 = (1/2)x_1 + (1/2)x_2$. The well-known Krein-Milman theorem says that if $C$ is a nonempty convex compact subset of $X$, then $C$ contains at least one extreme point and, furthermore, $C = \overline{\co}(\ext(C))$ (see, for instance, \cite[Theorem 3.65]{FHHMZ2010}).

We say that a Banach space $X$ has the {\it Krein-Milman property} (KMP, for short) if every closed bounded convex subset of $X$ is the norm closed convex hull of its extreme points. The fact that every Banach space with the RNP has the KMP is due to J. Lindenstrauss (see, for instance, \cite[Theorem 7, p. 191]{DU1977}). 

The version we introduce below, the so-called $G$-KMP, is the natural group-compatible analogue of the KMP.

\begin{definition} \label{def:G-KMP}
    Let $G$ be a topological group, $X$ be a $G$-Banach space, and let $C \subseteq X$ be a closed bounded convex $G$-invariant subset of $X$. We say that $C$ has the \emph{$G$-Krein-Milman property} ($G$-KMP, for short) whenever $C = \overline{\co}_G(\ext(C))$.	We say that the Banach space $X$ satisfies the \emph{$G$-KMP} if every closed bounded convex $G$-invariant subset of $X$ has the $G$-KMP. 
\end{definition}

Let us notice that $\overline{\co}_G(\ext(C)) = \overline{\co}(\ext(C))$ since the set of extreme points in a $G$-invariant set is $G$-invariant. This means we have in fact that $C$ satisfies the $G$-KMP whenever $C = \overline{\co} (\ext(C))$.

\subsection{$G$-Radon-Nikod\'ym property} In this subsection, we introduce the group equivariant version of the Radon–Nikodým property for Banach spaces, the main topic of the present paper. As we know from the classical scenario, the dentability of a Banach space is equivalent to the RNP (see, for instance, \cite[Theorem 2.9]{Pisier2016}). Our aim is to prove that, under some hypothesis on the group $G$, weak $G$-dentability is equivalent to the $G$-RNP (see \Cref{def:newG-RNP} and \Cref{thm:G-RNPimpliesG-dentability}).     

Before providing the definition of the $G$-Radon-Nikod\'ym property, let us recall the relations between linear isometries of $L_p$ spaces for $p\in\left[1,\infty\right\}\setminus\{2\}$ and isomorphisms of measure spaces.

\begin{definition}[{\cite[Definition 3.2.3]{FlemingJamison2002}}]
Let $(\Omega_1, \Sigma_1, \mu_1)$ and $(\Omega_2, \Sigma_2, \mu_2)$ be measure spaces. A set map $T: \Sigma_1 \to \Sigma_2$ defined modulo null sets (i.e., $T(A)=T(B)$, for $A,B\in\Sigma_1$, if $\mu_1(A\triangle B)=0$) is called a \emph{regular set isomorphism} if 
    \begin{enumerate}
        \itemsep0.25em 
        \item $T(\Omega_1 \setminus A) = T\Omega_1 \setminus TA$ for all $A \in \Sigma_1$.
        \item $T(\cup_{n=1}^\infty A_n)=\cup_{n=1}^\infty TA_n$ for disjoint $A_n \in \Sigma_1$.
        \item $\mu_2 (TA)=0$ if and only if $\mu_1 (A)=0$.
    \end{enumerate}
All set equalities and inclusions are understood to be modulo null sets. 

Denoting by \[N_i:=\{A\in\Sigma_i\colon \mu_i(A)=0\},\quad i\in\{1,2\}\] the corresponding null ideals, a regular set isomorphism induces an isomorphism of the quotient measure algebras $\Sigma_1/N_1$ and $\Sigma_2/N_2$, and conversely, every such an isomorphism induces a regular set isomorphism.

\end{definition}

\begin{theorem}[Banach-Lamperti (see, for instance, {\cite[Theorem 3.2.5]{FlemingJamison2002}})]\label{thm:BanachLamperti}
    Suppose that $U: L_p(\Omega_1, \Sigma_1, \mu_1) \rightarrow L_p(\Omega_2, \Sigma_2, \mu_2)$ is a surjective linear isometry, where $1 \leq p < \infty$, $p\not=2$, and the measure spaces are $\sigma$-finite. Then, there exists a regular set isomorphism $T:\Sigma_1\to\Sigma_2$ and a function $h$ defined on $\Omega_2$ such that 
    \begin{equation}\label{eq:BL}
    U(f)(t) = h(t) (\widetilde{T}f)(t)
    \end{equation}
    where $\widetilde{T} :L_0 (\Omega_1,\Sigma_1, \mu_1) \to L_0 (\Omega_2, \Sigma_2, \mu_2)$ is a linear operator induced from $T$ defined by 
    \begin{equation}\label{def:T_tilde}
    \widetilde{T}f(t)=s\quad\text{if}\quad t\in \Big(\bigcap_{r\in\mathbb{Q}\setminus (-\infty,s]}T\big[f^{-1}\left(-\infty,r\right]\big]\setminus \bigcup_{r\in\mathbb{Q}\setminus [s,\infty)}T\big[f^{-1}\left(-\infty,r\right]\big]\Big),    
    \end{equation}
     and where $L_0$ denotes the space of real-valued measurable functions on the corresponding measure space (see \cite[Remarks 3.2.4]{FlemingJamison2002} for more details about $\widetilde{T}$). In particular, $\widetilde{T}(\chi_A)= \chi_{TA}$ for all $A \in\Sigma_1$. Moreover, $h$ satisfies that 
    \[
    \int_{TA} |h|^p \, d\mu_2 = \int_{TA} \frac{d( \mu_1 \circ T^{-1} )}{d\mu_2} \, d\mu_2 = \mu_1(A), \quad \forall A \in \Sigma_1.
    \]
    Conversely, for any $h$ and $T$ as above, the operator $U$ satisfying \eqref{eq:BL} is an isometry.

    Moreover, the linear isometry $U$ is positive if and only if the function $h$ is positive, i.e., equal to $\big(\frac{d(\mu_1\circ T^{-1})}{d\mu_2}\big)^{1/p}$.
\end{theorem}

Let $(\Omega,\Sigma,\mu)$ be a $\sigma$-finite measure space. The set of all regular set isomorphisms on $(\Omega,\Sigma,\mu)$ forms a group, denoted by $\Aut^*(\Omega,\Sigma,\mu)$, or just $\Aut^*(\mu)$ if there is no risk of confusion. To each $T \in \Aut^* (\mu)$, we can assign the isometry $U_T \in \Iso (L_1 (\Omega,\Sigma,\mu))$ given by 
\[
U_T (f) (t) = \frac{d(\mu\circ T^{-1})}{d\mu} (t) (\widetilde{T} f)(t), 
\]
where {$\widetilde{T}:L_0(\Omega,\Sigma,\mu)\to L_0(\Omega,\Sigma,\mu)$ is defined as in \eqref{def:T_tilde}}. The map $\Phi : T \mapsto U_T$ is known to be an algebraic isomorphism between $\Aut^* (\mu)$ and the group $\Iso^+ (L_1(\Omega,\Sigma,\mu))$ of positive linear isometries on $L_1(\Omega,\Sigma,\mu)$ (see, e.g., \cite[Theorem 3.1]{Lamperti1958}). Note that $\frac{d(\mu\circ T^{-1})}{d\mu} (\cdot)$ is a nonnegative measurable function.

We equip $\Aut^*(\mu)$ with the topology obtained from the isomorphism $\Phi$. Moreover, given a topological group $G$, notice that actions of $G$ on $(\Omega,\Sigma,\mu)$ by regular set isomorphisms are in one-to-one correspondence with homomorphisms from $G$ into $\Aut^*(\mu)$. We say that an action of $G$ on $(\Omega,\Sigma,\mu)$ by regular set isomorphisms is \emph{continuous} if the corresponding homomorphism from $G$ into $\Aut^*(\mu)$ is continuous.

The Banach–Lamperti theorem (\Cref{thm:BanachLamperti}) then immediately yields the following result.

\begin{corollary}\label{cor:BanachLamperti}
Given a topological group $G$ and a finite measure space $(\Omega,\Sigma,\mu)$, continuous actions of $G$ on $(\Omega,\Sigma,\mu)$ by regular set isomorphisms are in one-to-one correspondence with continuous actions of $G$ on $L_1(\Omega,\Sigma,\mu)$ by positive linear isometries.
\end{corollary}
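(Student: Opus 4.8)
The statement to prove is \Cref{cor:BanachLamperti}: given a topological group $G$ and a finite measure space $(\Omega,\Sigma,\mu)$, continuous actions of $G$ on $(\Omega,\Sigma,\mu)$ by regular set isomorphisms are in one-to-one correspondence with continuous actions of $G$ on $L_1(\Omega,\Sigma,\mu)$ by positive linear isometries.

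\textbf{Plan.} The algebraic part of the correspondence is already in hand: by Lamperti's theorem (cited just above the statement, \cite[Theorem 3.1]{Lamperti1958}), the map $\Phi\colon \Aut^*(\mu)\to\Iso^+(L_1(\mu))$, $T\mapsto U_T$, is a group isomorphism, and actions of $G$ on $(\Omega,\Sigma,\mu)$ by regular set isomorphisms correspond bijectively to homomorphisms $G\to\Aut^*(\mu)$, while actions of $G$ on $L_1(\mu)$ by positive linear isometries correspond bijectively to homomorphisms $G\to\Iso^+(L_1(\mu))$. Composing with $\Phi$ gives a bijection between the two sets of homomorphisms, hence between the two sets of actions. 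The only real content is therefore to check that this bijection respects \emph{continuity}: a homomorphism $\rho\colon G\to\Aut^*(\mu)$ is continuous if and only if $\Phi\circ\rho\colon G\to\Iso^+(L_1(\mu))$ is continuous. But we have \emph{defined} the topology on $\Aut^*(\mu)$ to be the one transported from $\Iso^+(L_1(\mu))$ via $\Phi$ (this is stated in the paragraph preceding the corollary). So $\Phi$ is by fiat a homeomorphism onto its image (which is all of $\Iso^+(L_1(\mu))$), and continuity of $\rho$ is equivalent to continuity of $\Phi\circ\rho$ essentially by definition.

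First I would spell out the definitions: an action of $G$ on $(\Omega,\Sigma,\mu)$ by regular set isomorphisms is by definition continuous exactly when the associated homomorphism $G\to\Aut^*(\mu)$ is continuous; an action of $G$ on $L_1(\mu)$ by positive linear isometries is continuous exactly when the associated homomorphism $G\to\Iso^+(L_1(\mu))\subseteq\Iso(L_1(\mu))$ is continuous, where $\Iso(L_1(\mu))$ carries the strong operator topology (and $\Iso^+$ the subspace topology). Then I would note that $\Iso^+(L_1(\mu))$ is a (closed) subgroup of $\Iso(L_1(\mu))$, so its own group topology as a subspace is the one relevant here, and $\Phi$ is a group isomorphism $\Aut^*(\mu)\to\Iso^+(L_1(\mu))$; equipping $\Aut^*(\mu)$ with the pullback topology makes $\Phi$ a homeomorphism. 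Finally, for a homomorphism $\rho\colon G\to\Aut^*(\mu)$, continuity of $\rho$ is equivalent to continuity of $\Phi\circ\rho$ because $\Phi$ is a homeomorphism; and $\rho\mapsto\Phi\circ\rho$ is the bijection between homomorphisms induced by the group isomorphism $\Phi$. Unwinding the correspondences between actions and homomorphisms on both sides, this yields the desired one-to-one correspondence between continuous actions.

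\textbf{Main obstacle.} There is no deep obstacle: the statement is essentially a formal consequence of the definition of the topology on $\Aut^*(\mu)$ together with Lamperti's algebraic isomorphism. The only point requiring a little care is to make sure the two notions of ``continuous action'' being compared are indeed ``continuity of the associated homomorphism into the relevant topological group'', which is exactly how continuity was set up in the preliminaries (continuity of an action $G\times X\to X$ on a Banach space is equivalent to continuity of the homomorphism $G\to\Iso(X)$, and continuity of an action on a measure space by regular set isomorphisms was defined as continuity of the homomorphism $G\to\Aut^*(\mu)$). One should also remark that $\Iso^+(L_1(\mu))$ is SOT-closed in $\Iso(L_1(\mu))$, so that checking continuity of a homomorphism landing in $\Iso^+(L_1(\mu))$ against the SOT of $\Iso(L_1(\mu))$ is the same as checking it against the subspace topology; this makes the use of the Banach--Lamperti description (\Cref{thm:BanachLamperti}), which identifies exactly which isometries of $L_1$ are positive, the substantive ingredient behind the surjectivity of $\Phi$.
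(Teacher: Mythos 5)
Your proposal is correct and matches the paper's approach: the paper states that the Banach--Lamperti theorem ``immediately yields'' the corollary, and the intended argument is exactly the one you spell out, namely that $\Phi$ is an algebraic isomorphism onto $\Iso^+(L_1(\mu))$, the topology on $\Aut^*(\mu)$ is by definition the pullback under $\Phi$, and both notions of continuity of an action reduce to continuity of the associated homomorphism. Your extra remarks (SOT-closedness of $\Iso^+$, surjectivity of $\Phi$ via the positivity criterion $h>0$) are accurate but not needed beyond what the paper already sets up.
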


\subsubsection{$G$-quasi-invariance of measures} \label{subsubsec:GquasiInv}

The first step towards defining the $G$-Radon-Nikod\'ym property is to define what it means for a measure to be $G$-quasi-invariant. The informal idea is the following: we look for a condition that would be equivalent to the linear operator defined by $m$ being $G$-equivariant. It turns out that given a finite measure space $(\Omega, \Sigma, \mu)$ equipped with a continuous action of $G$ by regular set isomorphisms and a vector measure $m: \Sigma \to X$, we need, for every $g \in G$ and $A \in \Sigma$, that
\begin{equation*}
    g \cdot m(A) = \int_{gA} \frac{d g_*\mu}{d\mu} \ \d m,
\end{equation*}
which generalizes the statement that $g\cdot m(A)=m(gA)$ when $\mu$ is a $G$-invariant measure. (For readers who are not comfortable with vector-valued integration, we note that the formal definition given below does not involve it.)

The slightly more delicate issue is to identify the appropriate assumptions under which this idea can be rigorously formalized. This subsection is devoted to discussing possible options of these assumptions and their relations. Since $G$-quasi-invariance constitutes the central hypothesis in the $G$-RNP, any assumptions built into its formulation inevitably become part of the $G$-RNP framework. For this reason, it is worthwhile to examine these alternatives in some detail.

Let $X$ be a Banach space and $(\Omega,\Sigma)$ be a measurable space. Consider an $X$-valued vector measure $m:\Sigma\to X$. If $m$ has finite total variation, then the formula 
\begin{equation}\label{def:T_m}
    T_m(\chi_A) = m(A), \quad A \in \Sigma
\end{equation}
defines a bounded linear operator $T_m: L_1(\Omega, \Sigma, |m|) \rightarrow X$. Indeed, for a simple function $s = \sum_{i=1}^n \lambda_i \chi_{A_i}$, where $A_1, \ldots, A_n$ are pairwise disjoint measurable sets, define $T_m(s) = \sum_{i=1}^n \lambda_i m(A_i)$ and we have 
\begin{equation*}
    \|T_m(s)\| \leq \sum_{i=1}^n |\lambda_i| \|m(A_i)\| \leq \sum_{i=1}^n |\lambda_i| |m|(A_i) = \|s\|_{L_1(|m|)}.
\end{equation*}
This inequality shows that $T_m$ extends uniquely to a bounded linear operator $T_m:L_1(\Omega, \Sigma, |m|) \rightarrow X$ with $\|T_m\| \leq 1$ (see, for instance, \cite[Proposition 2.2]{Pisier2016}).

Throughout this subsection, we shall repeatedly pass between a vector measure $m$ and its associated operator $T_m$, and we will freely use the notation $T_m$ for the latter. The operator $T_m$ should be viewed as the operator-theoretic counterpart of $m$.

We will now give the definition of $G$-quasi-invariance we chose as the main definition and then compare it with alternatives.

\begin{definition}\label{def:quasi-invariant-vector-measure}
    Let $G$ be a topological group and $X$ be a $G$-Banach space. Suppose that $G$ acts continuously on a finite measure space $(\Omega,\Sigma,\mu)$ by regular set isomorphisms. Let $m: \Sigma \to X$ be an $X$-valued measure on $\Sigma$ with finite total variation such that $|m| \ll \mu$. We say that $m$ is \emph{$G$-quasi-invariant} if the following two conditions hold:
        \begin{enumerate}[label=(\arabic*)]
        \item \label{common-condition-quasi-invariant_first_condition}
            \begin{equation*}
                \forall g \in G : \frac{dg_{*}\mu}{d\mu} \in L_1(|m|),
            \end{equation*}
            where $g_*\mu$ is the pushforward measure defined by $g_*\mu(A):=\mu(g^{-1}A)$ and $\frac{dg_*\mu}{d\mu}$ is the Radon-Nikod\'ym derivative of $g_*\mu$ with respect to $\mu$, and
        \item \label{common-condition-quasi-invariant_second_condition}
            \begin{equation*} 
                \forall g \in G \ \forall A \in \Sigma : g \cdot m(A) = T_m \left( \frac{dg_{*} \mu}{d\mu} \chi_{gA} \right).
            \end{equation*}
    \end{enumerate}
\end{definition}

\begin{remark}\label{rem:G-quasi-invariance}
    The definition deserves a few remarks:
    \begin{enumerate}[label=(\alph*)] \label{rem:defQuasiInvVectorMeasure}

        \item Condition \ref{common-condition-quasi-invariant_first_condition} serves to ensure that the condition \ref{common-condition-quasi-invariant_second_condition} is well-defined: indeed, it guarantees that the expression $T_m \left( \frac{dg_{*} \mu}{d\mu} \chi_{gA} \right)$ is well-defined for all $g \in G$ and $A \in \Sigma$.

        \item If the action of $G$ on $(\Omega,\Sigma,\mu)$ preserves the measure, i.e., for every $g\in G$ and $A\in\Sigma$ we have $\mu(gA)=\mu(A)$, then an $X$-valued finite-variation measure $m$ is $G$-quasi-invariant if and only if $m$ is $G$-equivariant; that is, for all $g\in G$ and $A\in\Sigma$, $m(gA)=g\cdot  m(A)$.

        \item\label{rem:QuasiInvVecMeasureInNiceCase} If one is lucky enough to work in a setting where $\abs{m} \leq C\mu$ for some $C > 0$, then condition \ref{common-condition-quasi-invariant_first_condition} is automatic. Indeed, this follows from the fact that $\frac{dg_*\mu}{d\mu}\in L_1(\mu)$, for all $g\in G$, and that $L_1(\mu)\subseteq L_1(|m|)$ by the estimate $ \int_A 1 d\abs{m} = \abs{m}(A) \leq C\mu(A) = C \int_A 1 d\mu$, for all $A\in\Sigma$, which can be extended using standard Lebesgue integration methods to obtain that for every $f \in L_1(\mu)$ one has $\int \abs{f} d\abs{m} \leq C \int \abs{f} d\mu < \infty$.

    \end{enumerate}
\end{remark}

The proposition below differs from \Cref{def:quasi-invariant-vector-measure} only in that condition \ref{common-condition-quasi-invariant_first_condition} is replaced by \ref{common-condition-quasi-invariant_first_condition_modified}. We present it as a self-contained statement, rather than simply proving the equivalence of \ref{common-condition-quasi-invariant_first_condition} and \ref{common-condition-quasi-invariant_first_condition_modified} under the common assumptions, because it can serve as an alternative definition of $G$-quasi-invariant vector measures.

\begin{proposition} \label{prop:AltDefOfQuasiInvVectorMeasure}
    Let $G$ be a topological group and $X$ be a $G$-Banach space. Suppose that $G$ acts continuously on a finite measure space $(\Omega,\Sigma,\mu)$ by regular set isomorphisms. Let $m: \Sigma \to X$ be an $X$-valued measure on $\Sigma$ with finite total variation such that $|m| \ll \mu$. Then $m$ is $G$-quasi-invariant if and only if:
    \begin{enumerate}[label=(\arabic*$^\star$)]
        \item\label{common-condition-quasi-invariant_first_condition_modified}
            \begin{equation*}
                \forall g \in G : \frac{dg_{*}|m|}{d|m|} = \frac{dg_{*}\mu}{d\mu} \quad \abs{m}\text{-a.e.}
            \end{equation*} where the left hand side of the equality is well-defined.
        \item[(2)]
            \begin{equation*} 
                \forall g \in G \ \forall A \in \Sigma : g \cdot m(A) = T_m \left( \frac{dg_{*} \mu}{d\mu} \chi_{gA} \right).
            \end{equation*}
    \end{enumerate}
\end{proposition}

\begin{proof}
    First, we check that $\frac{dg_*|m|}{d|m|}$ is automatically a well-defined element of $L_1(|m|)$ whenever $m$ is $G$-quasi-invariant. It suffices to check that $g_*|m|\ll|m|$, for every $g\in G$. Fix $g\in G$ and $A\in\Sigma$ such that $|m|(A)=0$. Let $B \in \Sigma$ with $B \subseteq g^{-1} A$. Then $|m|(gB)=0$, i.e., $\chi_{gB}=0$ $|m|$-a.e. By $G$-quasi-invariance of $m$,  
\[
g \cdot m(B) = T_m \left( \frac{dg_*\mu}{d\mu}  \chi_{gB}\right) =0.
\]
Since $g$ acts on $X$ as a linear isometry, it follows that $m(B)=0$, for every $B \subseteq g^{-1} A$. Thus, $g_* |m|(A)=|m| (g^{-1} A) =0$. Consequently, $g_* |m| \ll |m|$.

Next we prove the equivalence between \ref{common-condition-quasi-invariant_first_condition} and \ref{common-condition-quasi-invariant_first_condition_modified}.

The implication \ref{common-condition-quasi-invariant_first_condition_modified} $\implies$ \ref{common-condition-quasi-invariant_first_condition} follows from
    \begin{equation*}
        \int_\Omega \frac{dg_*\mu}{d\mu}\,d|m|
        = \int_\Omega \frac{dg_*|m|}{d|m|}\,d|m|
        = g_*|m|(\Omega)
        = |m|(\Omega)<\infty.
    \end{equation*}

    The more interesting case is to show that \ref{common-condition-quasi-invariant_first_condition} and \ref{common-condition-quasi-invariant_second_condition} imply \ref{common-condition-quasi-invariant_first_condition_modified}. To do so, fix $g \in G$ and define an $X$-valued measure $m_g: \Sigma \to X$ by $m_g(B) = g \cdot m(g^{-1} B)$ for all $B \in \Sigma$. Since the action of $G$ on $X$ is by linear isometries, it is easy to see that
    \begin{equation*}
        \abs{m_g}(B) = \abs{m}(g^{-1}B) = g_*\abs{m}(B)
    \end{equation*}
    for all $B \in \Sigma$, i.e., $\abs{m_g} = g_*\abs{m}$. The assumption (2) gives us, for all $B \in \Sigma$, 
    \begin{equation*}
        m_g(B) = g \cdot m(g^{-1}B) = T_m \left( \frac{dg_*\mu}{d\mu} \chi_{gg^{-1}B} \right) = \int_B \frac{dg_*\mu}{d\mu} dm,
    \end{equation*}
        where the last equality follows by approximating $\frac{dg_*\mu}{d\mu} \chi_B$ by simple functions and using the defining identity $T_m (\chi_A)=m(A)$ together with the continuity of $T_m$.
    By the variation formula for the indefinite integral of a real-valued function with respect to a vector measure (see \cite[Theorem 29 in \S2]{Din2000}), 
    \begin{equation*}
        \abs{m_g}(B) = \int_B \frac{dg_*\mu}{d\mu} d \abs{m}.
    \end{equation*}
    Putting these two ways of expressing $\abs{m_g}$ together, we see that
    \begin{equation*}
        g_*\abs{m}(B) = \abs{m_g}(B) = \int_B \frac{dg_*\mu}{d\mu} d\abs{m}
    \end{equation*}
    for all $B \in \Sigma$, which is the same as saying that $dg_*\abs{m}/d\abs{m} = dg_*\mu/d\mu$.
\end{proof}

The next lemma will be useful when passing from $\mu$ to the variation measure $|m|$.

\begin{lemma} \label{lemma:contActionOnL1(abs(mu))}
    Let $(\Omega, \Sigma, \mu)$ be a finite measure space on which a topological group $G$ acts continuously by regular set isomorphisms. Let $X$ be a $G$-Banach space and $m: \Sigma \to X$ be an $X$-valued measure with finite total variation satisfying $|m|\ll\mu$ such that $m$ is a $G$-quasi-invariant measure. Equip $L_1(\mu)$ and $L_1(|m|)$ with the induced actions of $G$ by positive linear isometries given by \Cref{cor:BanachLamperti}. If the action on $L_1(\mu)$ is continuous, then the action on $L_1(\abs{m})$ is continuous as well.
\end{lemma}
\begin{proof}

    For any $g\in G$, denote by $\widetilde{g}$ the operator on $L_0(\Omega,\Sigma,\mu)$ from \Cref{thm:BanachLamperti}. We claim that for any $g\in G$, 
    \begin{equation}\label{eq:g_tilde_dm_dmu}
    \widetilde{g}\left( \frac{d|m|}{d\mu}\right)=\frac{dg_*|m|}{dg_*\mu}. 
    \end{equation}
    Indeed, for any $A\in\Sigma$ we have \[g_*|m|(A)=|m|(g^{-1}A)=\int_{g^{-1}A} \frac{d|m|}{d\mu}d\mu=\int_A \widetilde{g}\Big(\frac{d|m|}{d\mu}\Big) dg_*\mu,\] which gives the equality.
    
    Next, we claim that
    \begin{equation}\label{eq:dm_dmu_G_invariance}
        \frac{d|m|}{d\mu}=\frac{dg_*|m|}{dg_*\mu} \ \mu\text{-a.e.}
    \end{equation}
    for all $g\in G$. It is enough to check that for every $A\in\Sigma$, $\int_A \frac{d|m|}{d\mu} d\mu= \int_A \frac{dg_*|m|}{dg_*\mu}d\mu$. By the chain rule for Radon-Nikod\'ym derivatives, 
    \[
    \int_A \frac{dg_*|m|}{dg_*\mu}d\mu = \int_A \frac{dg_*|m|}{d|m|}\cdot\frac{d|m|}{d\mu}\cdot \frac{d\mu}{dg_*\mu}d\mu=\int_A \frac{dg_*|m|}{d|m|}\cdot \frac{d\mu}{dg_*\mu}\cdot \frac{d|m|}{d\mu}d\mu,
    \]
    which is, using that $\frac{d\mu}{dg_*\mu}=\frac{d|m|}{dg_*|m|}$, equal to $\int_A \frac{d|m|}{d\mu}d\mu$. This proves the equality in \eqref{eq:dm_dmu_G_invariance}.
    Consequently, by \eqref{eq:g_tilde_dm_dmu} and \eqref{eq:dm_dmu_G_invariance}, we obtain 
    \[
    \widetilde{g}\left(\frac{d|m|}{d\mu}\right)=\frac{d|m|}{d\mu}
    \]
    for all $g \in G$.
    This implies that, modulo $\mu$-null sets, the sets
    \begin{equation*}
        E_n = \left(\frac{d\abs{m}}{d\mu}\right)^{-1} \left( \left[\frac{1}{n},n\right] \right) =  \left\{ \omega \in \Omega: \frac{1}{n} \leq \frac{d\abs{m}}{d\mu} (\omega) \leq n \right\}
    \end{equation*}
    form an increasing sequence of $G$-invariant {$\Sigma$}-measurable sets. Indeed, $E_n$ is $G$-invariant since
    \begin{align*}
    {gE_n} &= {g\left(\frac{d\abs{m}}{d\mu}\right)^{-1} \left( \left[\frac{1}{n},n\right] \right) } \\
    &= \left(\widetilde{g} \left(\frac{d\abs{m}}{d\mu} \right) \right)^{-1} \left( \left[\frac{1}{n},n\right] \right) = \left(\frac{d\abs{m}}{d\mu} \right)^{-1} \left( \left[\frac{1}{n},n\right] \right) =E_n
    \end{align*}
    modulo $\mu$-null sets, where the second equality follows from one of the properties of $\widetilde{g}$ (see \cite[Remarks 3.2.4(v)(c)]{FlemingJamison2002}). 
    Moreover, 
    \[
    \bigcup_{n=1}^\infty E_n = \left \{ \omega \in \Omega : \frac{d\abs{m}}{d\mu} (\omega) > 0 \right \},
    \]
    which is $\abs{m}$-conull.

    Finally, we are ready to prove the continuity of the action of $G$ on $L_1(\abs{m})$. Let $u \in L_1(\abs{m})$ and $\eps > 0$. Find $n \in \N$ such that
    \begin{equation*}
        \norm{u - u\chi_{E_n}}_{L_1(\abs{m})} < \frac{\eps}{3}.
    \end{equation*}
    Then, $u\chi_{E_n} \in L_1(\mu)$ since $\int_{E_n} \abs{u} d\mu \leq n \int_{E_n} \abs{u} d\abs{m} < \infty$.
    By continuity of the action on $L_1(\mu)$, there is a neighbourhood $U \subseteq G$ of $1_G$ such that for all $g \in U$ we have
    \begin{equation*}
        \norm{g \cdot u\chi_{E_n} - u\chi_{E_n}}_{L_1(\mu)} < \frac{\eps}{3n}.
    \end{equation*}
    Then, for all $g \in U$, we have
    \begin{align*}
        &\norm{g\cdot u - u}_{L_1(\abs{m})}\\
        &\hspace{2cm}\leq \norm{g \cdot u\chi_{E_n} - u\chi_{E_n}}_{L_1(\abs{m})} + \norm{g \cdot (u - u\chi_{E_n})}_{L_1(\abs{m})} + \norm{u - u\chi_{E_n}}_{L_1(\abs{m})}\\
        &\hspace{2cm}< n\norm{g \cdot u\chi_{E_n} - u\chi_{E_n}}_{L_1(\mu)} + \frac{2}{3}\eps\\
        &\hspace{2cm}< \frac{\eps}{3} + \frac{2\eps}{3} = \eps.
    \end{align*}
    Note that the second inequality uses the fact that both $g\cdot (u\chi_{E_n})$ and $u\chi_{E_n}$ are supported in $E_n$. This follows from the $G$-invariance of $E_n$ established above.
\end{proof}

Finally, we prove the following result, which shows that the operator $T_m$ is $G$-equivariant if and only if the vector measure $m$ is $G$-quasi-invariant. This equivalence motivates the definition of $G$-quasi-invariance.

\begin{fact}\label{fact:G-quasi-invariant}

Let $(\Omega,\Sigma,\mu)$ be a finite measure space on which a topological group $G$ acts continuously. Let $X$ be a $G$-Banach space and let $m:\Sigma\to X$ be an $X$-valued vector measure of finite variation such that $|m|\ll\mu$ and 
\[
\forall g \in G: \frac{dg_{*}|m|}{d|m|} = \frac{dg_{*}\mu}{d\mu} \quad \abs{m}\text{-a.e.}
\] 
Then $m$ is $G$-quasi-invariant if and only if the associated operator $T_m$ is $G$-equivariant.

\end{fact}

\begin{proof} Suppose first that $m$ is $G$-quasi-invariant. 
We have 
\begin{align*}
    g\cdot T_m (\chi_A) = T_m \left( \frac{dg_{*}\mu}{d\mu}\chi_{gA} \right)  = T_m (g\cdot \chi_A)
\end{align*}
for every $g \in G$ and $A \in \Sigma$.

Since simple functions are dense in $L_1(\Omega, \Sigma, |m|)$ and both $T_m$ and the $G$-actions are continuous, it follows that $g \cdot T_m(f) = T_m(g \cdot f)$ for every $f \in L_1(\Omega, \Sigma, |m|)$, i.e., $T_m$ is $G$-equivariant.

Suppose that $T_m: L_1(\Omega, \Sigma, |m|) \rightarrow X$ is $G$-equivariant. We need to verify \ref{common-condition-quasi-invariant_second_condition} in \Cref{def:quasi-invariant-vector-measure}. Given $g\in G$ and $A\in\Sigma$, we have 
\[g\cdot m(A)=g \cdot T_m (\chi_A)=T_m (g\chi_A)=T_m \left(\frac{dg_*|m|}{d|m|}\chi_{gA}\right)=T_m \left(\frac{dg_*\mu}{d\mu}\chi_{gA}\right). \qedhere
\] 
\end{proof}

While in general we are interested in actions of groups on finite measure spaces $(\Omega,\Sigma,\mu)$ by regular set isomorphisms, in many natural examples the group acts on the underlying set $\Omega$ itself. We explain the relation between such actions and actions by regular set isomorphisms in the remark below.

\begin{remark}\label{rem:RNPdef}
Suppose that $(\Omega,\Sigma,\mu)$ is a finite measure space on which a topological group $G$ acts by a \emph{point action}, that is, $G$ acts on $\Omega$ by measurable bijections that preserve the measure class of $\mu$, i.e., $\mu\ll g_*\mu$ and $g_* \mu \ll\mu$, for all $g\in G$. Then, clearly this action induces an action on $\Sigma$ by regular set isomorphisms.

We note that if $G$ is locally compact and second countable and $(\Omega,\Sigma,\mu)$ is a standard probability space, then every continuous action of $G$ on $(\Omega,\Sigma,\mu)$ by regular set isomorphisms comes from some point action of $G$ on $\Omega$ by measurable bijections that preserve the measure class of $\mu$. This follows from Mackey's theorem (see \cite[Theorem 3.3]{Ramsay71}). Moreover, $(\Omega,\Sigma,\mu)$ may be assumed, without loss of generality, to be a standard probability space if and only if $L_1(\Omega, \Sigma, \mu)$ is separable. One direction is clear. For the other, the separability assumption guarantees that there is a measure algebra isomorphism between $\Sigma/N_\mu$, where $N_\mu$ is the ideal of null sets in $\Sigma$, and a measure algebra of a standard probability space (see e.g. \cite[Theorem 15.3.4]{Roy-book88}). The measure algebra isomorphism induces a linear isometry between the corresponding $L_1$ spaces.
\end{remark}

\subsubsection{Definition of the $G$-RNP}
Finally, we are ready to state the main definitions of this subsection and one of the central new notions of the paper.

\begin{definition}\label{def:newG-RNP}
    Let $G$ be a topological group and $X$ be a $G$-Banach space. We say that $X$ has the \emph{$G$-Radon-Nikodým property} ($G$-RNP, for short) if, for every finite measure space $(\Omega, \Sigma, \mu)$ equipped with a continuous action of $G$ by regular set isomorphisms and for every $G$-quasi-invariant $X$-valued vector measure $m: \Sigma \rightarrow X$, there exists a function $f\in L_1 (\Omega, \Sigma, \mu; X)$ such that
    \begin{equation*}
        m(A) = \int_A f d \mu
    \end{equation*}
    for every $A \in \Sigma$.
\end{definition}

It is immediate from Definition \ref{def:newG-RNP} that if a $G$-Banach space $X$, where $G$ is a topological group, has the RNP, then it has the $G$-RNP.

It is well-known that the classical RNP is equivalent to the RNP with respect to the measure space $([0,1],\mathcal{B},\lambda)$, where $\mathcal{B}$ is the Borel $\sigma$-algebra of $[0,1]$ (\cite[Corollary 8]{DU1977} or \cite[Corollary 2.14]{Pisier2016}). This is also the case for the $G$-RNP when $G$ is locally compact and second countable (see \Cref{cor:G-RNPforinterval}).

Moreover, it is well known that the classical RNP is equivalent to requiring that, for every finite measure space $(\Omega,\Sigma,\mu)$, each $T :L_1 (\Omega,\Sigma,\mu)\to X$ is \emph{representable}, i.e., there exists $\phi \in L_\infty (\Omega, \Sigma, \mu; X)$ such that \begin{equation}\label{eq:G-representable-operators}
        Tf = \int f \phi \, d\mu, \quad \forall f\in L_1 (\Omega,\Sigma,\mu).
    \end{equation}
We shall show that the natural $G$-analogue of this equivalence still holds for any topological group $G$. The following is the $G$-version of the equivalent definition of the RNP in terms of representable operators.

\begin{definition}\label{def:G-representability}
    A $G$-Banach space $X$ has the \emph{$G$-Riesz representation property} if every $G$-equivariant bounded linear operator $T: L_1(\Omega, \Sigma, \mu) \rightarrow X$ is representable, where $(\Omega, \Sigma, \mu)$ is a finite measure space equipped with a continuous action of $G$ by regular set isomorphisms, and $L_1(\Omega, \Sigma, \mu)$ is endowed with the corresponding continuous action of $G$ by positive linear isometries. 
\end{definition}

\begin{example}\label{ex:L1-RNP}
    $L_1 [0,1]$ does not have the $G$-Riesz representation property for any action of any group $G$ on $[0,1]$ by regular set isomorphisms since the identity on $L_1 [0,1]$ (which is, obviously, $G$-equivariant with respect to the induced action by positive linear isometries) does not admit any bounded Radon-Nikod\'ym derivative $\phi : [0,1]\to L_1[0,1]$.
    \end{example}

As promised, we present the equivalence between the $G$-Riesz representation property and the $G$-RNP.

\begin{proposition}\label{prop:G-Riesz-equivalent_to_G-RNP}
Let $G$ be a topological group and $X$ be a $G$-Banach space. The following statements are equivalent. 
  \begin{enumerate}
      \itemsep0.25em
      \item $X$ has the $G$-Riesz representation property.
      \item $X$ has the $G$-RNP.
  \end{enumerate}  
\end{proposition}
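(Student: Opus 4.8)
The plan is to observe that the two statements are reformulations of one another under the dictionary $m \mapsto T_m$ between $X$-valued vector measures $m$ on $\Sigma$ with $|m| \ll \mu$ and bounded linear operators $T \colon L_1(\Omega,\Sigma,\mu) \to X$ (set up just before \Cref{def:quasi-invariant-vector-veasure}), combined with the one-to-one correspondence from \Cref{cor:BanachLamperti} between continuous actions of $G$ on $(\Omega,\Sigma,\mu)$ by regular set isomorphisms and continuous actions of $G$ on $L_1(\Omega,\Sigma,\mu)$ by positive linear isometries, and with \Cref{fact:G-quasi-invariant}, which matches $G$-quasi-invariance of $m$ with $G$-equivariance of $T_m$.

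For the implication $(1) \Rightarrow (2)$, I would start from a finite measure space $(\Omega,\Sigma,\mu)$ carrying a continuous action of $G$ by regular set isomorphisms and a $G$-quasi-invariant $X$-valued vector measure $m$ with $|m| \ll \mu$. By \Cref{cor:BanachLamperti} this induces a continuous action of $G$ on $L_1(\Omega,\Sigma,\mu)$ by positive linear isometries, and by \Cref{fact:G-quasi-invariant} the associated operator $T_m$ is $G$-equivariant for this action. The $G$-Riesz representation property then yields $\phi \in L_\infty(\Omega,\Sigma,\mu;X)$ with $T_m f = \int f\phi\, d\mu$ for all $f \in L_1(\Omega,\Sigma,\mu)$; evaluating at $f = \chi_A$ gives $m(A) = \int_A \phi\, d\mu$, and since $\mu$ is finite we have $\phi \in L_1(\Omega,\Sigma,\mu;X)$, so $f := \phi$ witnesses the $G$-RNP.

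For $(2) \Rightarrow (1)$, I would start from a finite measure space $(\Omega,\Sigma,\mu)$ with a continuous action of $G$ on $L_1(\Omega,\Sigma,\mu)$ by positive linear isometries and a $G$-equivariant operator $T \in \mathcal{L}^G(L_1(\Omega,\Sigma,\mu),X)$, and set $m(A) := T(\chi_A)$. Using that $\mu$ is finite and $T$ is continuous, one checks that $m$ is a countably additive $X$-valued vector measure with $\|m(A)\| \le \|T\|\,\mu(A)$ for all $A$, hence $|m| \le \|T\|\,\mu$ and in particular $|m| \ll \mu$; moreover $T_m = T$ on simple functions, and therefore $T_m = T$ by density and continuity. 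Pulling the $L_1$-action back along \Cref{cor:BanachLamperti} to an action on $(\Omega,\Sigma,\mu)$ by regular set isomorphisms, \Cref{fact:G-quasi-invariant} shows $m$ is $G$-quasi-invariant, so the $G$-RNP provides $f \in L_1(\Omega,\Sigma,\mu;X)$ with $m(A) = \int_A f\, d\mu$. From $\int_A \|f\|\, d\mu = |m|(A) \le \|T\|\,\mu(A)$ for every $A \in \Sigma$, I conclude $\|f\| \le \|T\|$ a.e., so $f \in L_\infty(\Omega,\Sigma,\mu;X)$; finally, extending $T(\chi_A) = m(A) = \int \chi_A f\, d\mu$ by linearity and continuity, using density of simple functions in $L_1$, shows that $T$ is representable with $\phi := f$.

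Since the argument is essentially a change of language, I do not expect any step to present a real obstacle; the only points requiring care are (a) that the correspondence of \Cref{cor:BanachLamperti} really intertwines the two notions of equivariance in precisely the form used by \Cref{fact:G-quasi-invariant} (already isolated in that Fact), and (b) the a.e.\ boundedness of the density produced by the $G$-RNP, which — as recalled in \Cref{section:prelim} — follows from the variation estimate $|m| \le \|T\|\,\mu$ exactly as in the classical Radon–Nikod\'ym setting.
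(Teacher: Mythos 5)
Your proposal is correct and follows essentially the same route as the paper's proof: both directions use the dictionary $m \leftrightarrow T_m$, \Cref{cor:BanachLamperti} to translate between the two types of actions, \Cref{fact:G-quasi-invariant} to match quasi-invariance with equivariance, and the variation estimate $|m|(A)=\int_A\|f\|\,d\mu\le\|T\|\mu(A)$ to upgrade the $L_1$ density to an $L_\infty$ one. No gaps.
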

\begin{proof}
    Suppose that $X$ satisfies the $G$-Riesz representation property. Let $(\Omega,\Sigma,\mu)$ be a finite measure space such that $G$ acts on it continuously by regular set isomorphisms and let $m: \Sigma \rightarrow X$ be a $G$-quasi-invariant vector measure.
   By \Cref{lemma:contActionOnL1(abs(mu))}, the action of $G$ on $L_1(\Omega, \Sigma, \abs{m})$ is also continuous.
    Moreover, by \Cref{prop:AltDefOfQuasiInvVectorMeasure} and \Cref{fact:G-quasi-invariant}, the action of $G$ on $L_1(\Omega, \Sigma, \abs{m})$ is given by 
    \[
    g \cdot \chi_A = \frac{dg_*|m|}{d|m|}\chi_{gA} = \frac{dg_* \mu}{d\mu}\chi_{gA},
    \]
    and the associated operator $T_m$ is $G$-equivariant. Therefore, by the $G$-Riesz representation property, there exists $\phi \in L_{\infty}(\Omega, \Sigma, \abs{m}; X)$ such that 
    \[
    T_mf = \int_{\Omega} f \phi d\abs{m}
    \]
    for every $f \in L_1(\Omega, \Sigma, \abs{m})$. In particular, for every $A \in \Sigma$, we have that
    \begin{equation*}
        m(A) = T_m(\chi_A) = \int_A \phi d \abs{m}.
    \end{equation*}
    Now, as $\abs{m} \ll \mu$, let $\rho := d\abs{m}/d\mu \in L_1(\Omega,\Sigma,\mu)$ be a nonnegative Radon-Nikod\'ym derivative of $\abs{m}$ with respect to $\mu$.
    
We claim that $\phi \rho$ is (strongly) $\mu$-measurable. By Pettis's measurability theorem, it suffices to prove that $\phi \rho$ is $\mu$-essentially separably valued and weakly $\Sigma$-measurable. First, since $\phi$ is (strongly) $|m|$-measurable, there exists  $N\in\Sigma$ such that $|m|(N)=0$ and $\phi (\Omega \setminus N)$ is a norm separable subset $S$ of $X$. Let $E:= N \cap \{\rho >0\} \in \Sigma$. Note that 
\[
0=|m|(E) = \int_E \rho \,d\mu,
\]
which implies that $\mu (E)=0$.  
If $\omega\notin E$, then either $\omega \in \Omega\setminus N$ or $\rho(\omega)=0$. In the former case,
$\phi(\omega)\in S$, while in the latter case
$(\phi\rho)(\omega)=0$. Hence
\[
(\phi\rho)(\Omega\setminus E)\subseteq \overline{\operatorname{span}}(S).
\]
It follows that $\phi\rho$ is $\mu$-essentially separably valued.
Next, for every $x^*\in X^*$, the scalar-valued function
$x^*\circ\phi$ is $\Sigma$-measurable. Hence $x^*\circ(\phi\rho)=(x^*\circ\phi)\rho$
is $\Sigma$-measurable as a product of scalar-valued measurable functions. This completes the claim.
    Moreover,
    \begin{equation*}
        \int_\Omega \|\phi\rho\|\,d\mu
        = \int_\Omega \|\phi\|\,d|m|
        \leq \|\phi\|_{L_\infty(|m|;X)}|m|(\Omega)<\infty.
    \end{equation*}
    Thus \(\phi\rho\in L_1(\Omega,\Sigma,\mu;X)\) and
    \begin{equation*}
        m(A) = \int_A \phi d \abs{m} = \int_A \phi \rho \, d\mu
    \end{equation*}
    for every $A \in \Sigma$. Hence $\phi\rho$ is a Radon-Nikod\'ym derivative of $m$ with respect to $\mu$. This shows that $X$ has the $G$-RNP.
    
    Conversely, suppose that $X$ has the $G$-RNP. Let $(\Omega,\Sigma,\mu)$ be a finite measure space with a continuous action of $G$ by regular set isomorphisms. Let $T:L_1 (\Omega,\Sigma,\mu)\to X$ be a bounded $G$-equivariant operator. Define the vector measure $m:\Sigma \to X$ by 
    \begin{equation*} 
        m(A):=T(\chi_A) 
    \end{equation*} 
    for every $A \in \Sigma$. Since $\|m(A)\| \leq \|T\| \mu(A)$ for every $A \in \Sigma$, $m$ is a vector measure of finite variation with $|m| \leq \|T\| \mu $. Thus $m$ is dominated by $\mu$. It follows from \Cref{rem:G-quasi-invariance}\ref{rem:QuasiInvVecMeasureInNiceCase} that $m$ satisfies \ref{common-condition-quasi-invariant_first_condition} in \Cref{def:quasi-invariant-vector-measure}. Moreover, \ref{common-condition-quasi-invariant_second_condition} in \Cref{def:quasi-invariant-vector-measure} holds as well since for every $g\in G$ and $A\in\Sigma$, \[g\cdot m(A)=gT(\chi_A)=T(g\chi_A)=T\Big(\frac{dg_*\mu}{d\mu}\chi_{gA}\Big)=T_m\Big(\frac{dg_*\mu}{d\mu}\chi_{gA}\Big),\] where the last equality follows since $L_1(\Omega,\Sigma,\mu)\subseteq L_1 (\Omega,\Sigma,|m|)$ and $T_m\restriction_{L_1(\Omega,\Sigma,\mu)} \equiv T$. Therefore, $m$ is $G$-quasi-invariant.

    Since $X$ has the $G$-RNP, there exists $\phi \in L_1(\Omega, \Sigma, \mu; X)$ such that $m(A) = \int_A \phi d \mu$ for every $A \in \Sigma$. By the standard formula for the variation of a vector measure with Bochner density, $|m|(A) = \int_A \|\phi\| \, d\mu$ (see \cite[Theorem 4, page 46]{DU1977}).  This implies that $\|\phi\| \leq \|T\|$ $\mu$-almost everywhere, thus in fact $\phi \in L_\infty (\Omega, \Sigma, \mu; X)$. Since 
    \[
    T(\chi_A) = m(A) = \int \phi \chi_A \,d\mu
    \]
    for every $A \in \Sigma$, the density of simple functions in $L_1 (\Omega,\Sigma,\mu)$ and continuity show that $T$ is representable, with representing function $\phi$.
    Therefore $X$ has the $G$-Riesz representation property.
\end{proof}

The reader may wonder why the functions $f$ and $\phi$ from \Cref{def:newG-RNP} and \Cref{def:G-representability}, respectively, are not required to have any $G$-equivariance properties. We show that they do have certain $G$-equivariance properties and that such properties are, in fact, automatic. Before introducing these properties let us give some motivation for their definition.

Let $(\Omega,\Sigma,\mu)$ be a finite measure space and $X$ be a Banach space. Suppose that a topological group $G$ acts on $(\Omega,\Sigma,\mu)$ by regular set isomorphisms so that the action comes from a point action on $\Omega$. Then, given $f\in L_1(\Omega,\Sigma,\mu;X)$, we say that $f$ is $G$-equivariant if, as expected, for every $g\in G$ and almost every $\omega\in\Omega$ we have $f(g\omega)=g \cdot f(\omega)$. However, since not every action of $G$ on $\Sigma$ by regular set isomorphisms comes from a point action, we need a more versatile notion of $G$-equivariance.

\begin{definition}\label{def:setisoGequivariance} 
    Let $G$ be a topological group acting on a finite measure space $(\Omega,\Sigma,\mu)$ by regular set isomorphisms and on a Banach space $X$ by linear isometries. We say that a function $f\in L_1(\Omega,\Sigma,\mu;X)$ is \emph{$G$-equivariant-on-sets} if {for every $g \in G$ and every $A \in \Sigma$, the function 
    \begin{equation*}
        \frac{dg_{*} \mu}{d \mu} \chi_{gA} f
    \end{equation*}
    belongs to $L_1(\Omega, \Sigma, \mu; X)$ and
    \begin{equation*} 
    \int_A g\cdot (f(\omega)) d\mu(\omega) = \int_{gA} f(\omega) \frac{d g_*\mu}{d \mu} (\omega)  \, d\mu(\omega).
    \end{equation*}
    }
\end{definition}
The integrability condition on the right-hand side is not automatic and must therefore be included in the definition. Indeed, for a general $f \in L_1(\Omega, \Sigma, \mu; X)$, the function $\frac{dg_{*} \mu}{d \mu} \chi_{gA}f$ need not be Bochner integrable.
Let us show that this indeed generalizes the standard notion of $G$-equivariance.
\begin{fact}
Retain the notation from \Cref{def:setisoGequivariance}. Suppose the action on $(\Omega,\Sigma,\mu)$ comes from a point action on $\Omega$. Then, a function $f\in L_1(\Omega,\Sigma,\mu;X)$ is $G$-equivariant-on-sets if and only if it is $G$-equivariant, that is, $f(g\omega) = g \cdot f(\omega)$ for every $g \in G$ and for almost every $\omega \in \Omega$.
\end{fact}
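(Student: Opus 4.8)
The plan is to show that each of the two conditions is, via the change-of-variables formula for the measure-class-preserving bijection $\omega \mapsto g\omega$, equivalent to the pointwise identity $g\cdot f(\omega) = f(g\omega)$ holding $\mu$-almost everywhere for every $g \in G$ — which is exactly the definition of $G$-equivariance.

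The first step would be to record the relevant change-of-variables identity. Fixing $g \in G$, the map $\phi_g \colon \Omega \to \Omega$, $\phi_g(\omega) = g\omega$, is a measurable bijection with $(\phi_g)_*\mu = g_*\mu$, and $g_*\mu \ll \mu$ since the point action preserves the measure class. Hence, for $f \in L_1(\Omega,\Sigma,\mu;X)$ and $A \in \Sigma$, applying the pushforward formula (first for nonnegative scalar integrands, then extended to Bochner-integrable $X$-valued ones) together with $\phi_g^{-1}(gA) = A$, and then the definition of the Radon--Nikod\'ym derivative, gives
\[
\int_A f(g\omega)\, d\mu(\omega) \;=\; \int_{gA} f(\omega)\, d(g_*\mu)(\omega) \;=\; \int_{gA} f(\omega)\, \frac{dg_*\mu}{d\mu}(\omega)\, d\mu(\omega),
\]
and moreover each side is finite exactly when the other is, so $\omega \mapsto f(g\omega)$ lies in $L_1(\Omega,\Sigma,\mu;X)$ if and only if $\omega \mapsto f(\omega)\frac{dg_*\mu}{d\mu}(\omega)$ does.

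With this identity in hand the equivalence follows quickly. If $f$ is $G$-equivariant, then $f(g\omega) = g\cdot f(\omega)$ a.e.; since $\|g\cdot f(\omega)\| = \|f(\omega)\|$ the function $\omega \mapsto g\cdot f(\omega)$ lies in $L_1(\Omega,\Sigma,\mu;X)$, hence so do $\omega \mapsto f(g\omega)$ and $\omega \mapsto f(\omega)\frac{dg_*\mu}{d\mu}(\omega)$, and substituting into the displayed identity yields $\int_A g\cdot f\, d\mu = \int_{gA} f\,\frac{dg_*\mu}{d\mu}\, d\mu$ for every $A \in \Sigma$, i.e.\ $G$-equivariance-on-sets. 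Conversely, $G$-equivariance-on-sets presupposes that $\omega \mapsto f(\omega)\frac{dg_*\mu}{d\mu}(\omega) \in L_1(\Omega,\Sigma,\mu;X)$ for each $g$ (so that the right-hand side of its defining identity is meaningful), hence $\omega \mapsto f(g\omega) \in L_1(\Omega,\Sigma,\mu;X)$; combining the defining identity with the displayed one then gives $\int_A g\cdot f\, d\mu = \int_A f(g\omega)\, d\mu(\omega)$ for all $A \in \Sigma$, and since two functions in $L_1(\Omega,\Sigma,\mu;X)$ with equal integrals over every measurable set coincide a.e., we conclude $g\cdot f(\omega) = f(g\omega)$ a.e.\ for every $g$, that is, $f$ is $G$-equivariant.

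The only genuinely delicate point — and hence the main obstacle, though a mild one — is the integrability bookkeeping: because $\frac{dg_*\mu}{d\mu}$ may be unbounded, $f \in L_1(\Omega,\Sigma,\mu;X)$ does not by itself force $\omega \mapsto f(g\omega) \in L_1(\Omega,\Sigma,\mu;X)$, and the change-of-variables identity above is precisely what reconciles the integrability hypotheses that are implicit on the two sides of the two definitions (and, in the $G$-equivariant direction, the a.e.\ identity $f(g\,\cdot\,) = g\cdot f(\,\cdot\,)$ supplies that integrability for free). Everything else is routine; in particular the vector-valued version of the change-of-variables/pushforward formula is deduced from the scalar case by testing against functionals in $X^*$, or by approximating $f$ with simple functions.
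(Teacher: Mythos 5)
Your proof is correct and follows essentially the same route as the paper's: both rely on the change-of-variables identity $\int_A f(g\omega)\,d\mu(\omega) = \int_{gA} f\,\frac{dg_*\mu}{d\mu}\,d\mu$ in each direction, with the converse concluded from the fact that two Bochner-integrable functions having equal integrals over every measurable set agree $\mu$-a.e. Your explicit bookkeeping of the integrability of $\omega\mapsto f(g\omega)$ is a point the paper passes over silently, but it does not alter the argument.
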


\begin{proof} {Suppose that $f$ is $G$-equivariant. Fix $g \in G$ and $A \in \Sigma$. First, the function 
\[
        \frac{dg_{*} \mu}{d \mu} \chi_{gA} f
\]
is Bochner integrable. Indeed, by the change of variable formula for the pushforward measure, we get that
\[
\int_{gA} \|f(\omega)\| \frac{dg_{*} \mu}{d\mu}(\omega) d\mu(\omega) = \int_A \|f(g\omega)\| \, d\mu(\omega) = \int_A \|g \cdot (f(\omega)) \| \, d\mu(\omega)  <\infty.
\]
Applying the same change-of-variables formula, we obtain
\begin{equation*}
    \int_A g \cdot (f(\omega)) d \mu(\omega) = \int_{gA} f(\omega) \frac{dg_{*} \mu}{d\mu}(\omega) d\mu(\omega).
\end{equation*}
Thus, $f$ is $G$-equivariant-on-sets.
}

Conversely, suppose that $f$ is $G$-equivariant-on-sets. Fix $g \in G$. For every $A \in \Sigma$, we have that 
\begin{equation*}
    \int_A g \cdot (f(\omega)) d\mu(\omega) = \int_{gA} f(\omega) \frac{dg_{*} \mu}{d\mu}(\omega) d \mu(\omega).
\end{equation*}
By the change of variables formula, the right-hand side is equal to 
\begin{equation*}
    \int_A f(g \omega) d \mu (\omega) 
\end{equation*}
and, therefore,
\begin{equation*}
    \int_A (g \cdot (f(\omega)) - f(g \omega)) d \mu(\omega) = 0 
\end{equation*}
for every $A \in \Sigma$. By the uniqueness for Bochner integrable functions, we obtain that $g \cdot (f(\omega)) = f(g\omega)$ for $\mu$-almost every $\omega \in \Omega$. Since $g$ was arbitrary, $f$ is $G$-equivariant.
\end{proof}

Now we are ready to show that the functions $f$ and $\phi$ from \Cref{def:newG-RNP} and \Cref{def:G-representability}, respectively, satisfy this definition of $G$-equivariance.

\begin{proposition} \label{prop:newG-RNP_function_f} Let $X$ be a $G$-Banach space. Let $(\Omega, \Sigma, \mu)$ be a finite measure space and suppose that $G$ acts on $(\Omega, \Sigma, \mu)$ by regular set isomorphisms. Let $m: \Sigma \rightarrow X$ be a $G$-quasi-invariant vector measure. Suppose that 
\begin{equation*}
    m(A) = \int_A f d \mu 
\end{equation*}
for every $A \in \Sigma$, where $f \in L_1(\Omega, \Sigma, \mu; X)$. Then, $f$ is $G$-equivariant-on-sets. In particular, every Radon-Nikodým derivative appearing in the definition of the $G$-RNP is $G$-equivariant-on-sets. Likewise, every representing function appearing in the definition of the $G$-Riesz representation property is $G$-equivariant-on-sets.
\end{proposition}

\begin{proof}   
By assumption, the variation of $m$ is given by 
\begin{equation*}
    |m|(B) = \int_B \|f(\omega)\| d\mu(\omega) 
\end{equation*}
for every $B \in \Sigma$. That is, $d|m| = \|f(\cdot)\| d\mu$. Since $m$ is $G$-quasi-invariant, we have that ${dg_{*} \mu} /{d\mu} \in L_1(\Omega, \Sigma, |m|)$. Thus, for $g \in G$ and $A \in \Sigma$, we obtain that 
\begin{equation*}
    \int_{gA} \frac{dg_{*} \mu}{d\mu}(\omega) \|f(\omega)\| d \mu(\omega) < \infty.
\end{equation*}
This implies that $\frac{dg_{*} \mu}{d\mu} \chi_{gA} f \in L_1(\Omega, \Sigma, \mu; X)$ and then the integral 
\begin{equation*}
    \int_{gA} f(\omega) \frac{dg_{*} \mu}{d\mu}(\omega) d\mu(\omega) 
\end{equation*}
is well-defined. Now, on the one hand, we have that 
\begin{equation*}
    g \cdot m(A) = g \cdot \int_A f(\omega) d\mu(\omega) = \int_A g \cdot ( f(\omega) ) \,d\mu(\omega).
\end{equation*}
On the other hand, we have that 
\begin{equation*}
    T_m \left( \frac{dg_{*} \mu}{d\mu} \chi_{gA} \right) = \int_{\Omega} \frac{dg_{*} \mu}{d\mu}(\omega) \chi_{gA}(\omega)dm(\omega) = \int_{gA} f(\omega) \frac{dg_{*} \mu}{d\mu}(\omega) d\mu(\omega) 
\end{equation*}
using the identity $dm = fd\mu$. Using the $G$-quasi-invariance of $m$, we get that 
\begin{equation*}
    \int_A (g \cdot f(\omega)) \, d \mu(\omega) = \int_{gA} f(\omega) \frac{dg_{*} \mu}{d\mu}(\omega) d\mu(\omega).
\end{equation*}
Since $g \in G$ and $A \in \Sigma$ are arbitrary, we get that $f$ is $G$-equivariant-on-sets. 

Finally, suppose that $T: L_1(\Omega, \Sigma, \mu) \rightarrow X$ is a $G$-equivariant bounded linear operator represented by some $\phi \in L_{\infty}(\Omega, \Sigma, \mu; X)$. Define $m_T(A):= T(\chi_A)$ for every $A \in \Sigma$. Then, $m_T$ is $G$-quasi-invariant by the equivalence between $G$-quasi-invariant vector measures and $G$-equivariant operators (see the proof of \Cref{prop:G-Riesz-equivalent_to_G-RNP}). Moreover, 
\begin{equation*}
    m_T(A) = \int_A \phi \,d\mu 
\end{equation*}
for every $A \in \Sigma$. Applying the first part of the proof to $m_T$, we conclude that $\phi$ is $G$-equivariant-on-sets.
\end{proof}

We conclude this section by emphasizing that \Cref{prop:G-Riesz-equivalent_to_G-RNP} allows one to identify the $G$-RNP with the $G$-Riesz representation property. Consequently, several arguments in the remainder of the paper will be formulated in terms of the $G$-Riesz representation property, even when the stated goal is to establish the $G$-RNP. We shall therefore freely identify these two notions without further comment.

\section{Consequences of the $G$-Bishop-Phelps property}\label{section:G-BP} The aim of this section is to prove that, as in the classical case (see \cite[Proposition 1]{Bourgain1977}), the $G$-Bishop-Phelps property implies strong $G$-dentability.
However, we prove it under the assumption that $G$ is a compact group (see \Cref{remark:G-dentability}(c) for a discussion on why compactness of $G$ is required).

The following is the main result of this section.

\begin{theorem}\label{thm:G-BP_implies_G-dentable-main} Let $G$ be a compact group and $X$ be a $G$-Banach space. If $X$ satisfies the $G$-BP, then $X$ is strongly $G$-dentable.
\end{theorem}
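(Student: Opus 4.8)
The plan is to imitate Bourgain's proof that the Bishop--Phelps property implies dentability (\cite[Proposition~1]{Bourgain1977}), but carried out entirely inside the equivariant category; the compactness of $G$ is exactly what licenses the averaging that equivariance forces on us. First, a reduction. By \Cref{fact:sGdentabilityInheritance} together with an elementary symmetrization (for $G$-invariant $D$ one has $\overline{\operatorname{aco}}(D)=\clco(D\cup(-D)\cup\{0\})$, and the case in which the dentability witness lies at $0$ is treated by hand), it suffices to prove that every bounded, closed, absolutely convex, $G$-invariant $C\subseteq X$ with the $G$-BP is $G$-dentable (once this is known, an arbitrary bounded $G$-invariant set is handled by passing to its closed absolutely convex hull). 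I argue this by contraposition, so suppose $C$ is not $G$-dentable. Using \Cref{fact:equivDefOfStrongGDentability} and the identity $\clco_G(A)=\overline{\operatorname{co}}(G\cdot A)$ (valid for any $A$, since the closed convex hull of a $G$-invariant set is again $G$-invariant), non-$G$-dentability furnishes $\varepsilon_0>0$ such that for every finite $\{x_1,\dots,x_n\}\subseteq X$ and every $x\in C$ we have $x\in\overline{\operatorname{co}}\big(G\cdot(C\setminus B_{\varepsilon_0}(x_1,\dots,x_n))\big)$. The goal is then to produce a $G$-Banach space $Y$, an operator $T\in\mathcal{L}^G(X,Y)$, and $\delta>0$ for which no $S\in\mathcal{L}^G(X,Y)$ with $\|S-T\|<\delta$ attains $\sup_{x\in C}\|Sx\|$.

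The target space is where compactness enters. I would take $Y$ to be the $c_0$-sum of countably many copies of $C(G)$ — the $c_0$-sum, not the $\ell_\infty$-sum, so that the diagonal left-translation action $(h\cdot\varphi)(g)=\varphi(h^{-1}g)$ is continuous — equivalently a suitable $C(K)$ with $K$ a compact $G$-space built from $G$. Equivariance of $S\colon X\to C(G)$ forces $(S(gx))(g)=(Sx)(1_G)$, whence $\mathcal{L}^G(X,C(G))\cong X^*$ isometrically via $S\leftrightarrow f$, $(Sx)(g)=f(g^{-1}x)$; consequently $\mathcal{L}^G(X,Y)$ is isometrically the space of bounded sequences $(f_n)\subseteq X^*$ with $\sup_{z\in Gx}|f_n(z)|\to0$ for each $x$, and for such $S$ one computes $\|Sx\|_Y=\sup_n\sup_{z\in Gx}|f_n(z)|=\sup_n\sup_{\overline{\operatorname{co}}(Gx)}|f_n|$ and $\sup_{x\in C}\|Sx\|_Y=\sup_n\sup_C|f_n|$. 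The decisive structural point: $G$ being compact, each orbit $Gx$ is compact, hence $\overline{\operatorname{co}}(Gx)$ is compact (Mazur), so ``$\sup_{x\in C}\|Sx\|_Y$ is attained'' means precisely that some $x_0\in C$ satisfies $\sup_n\sup_{\overline{\operatorname{co}}(Gx_0)}|f_n|=\sup_n\sup_C|f_n|$. Moreover, one inequality comes for free: expanding any $x\in C$ through $\overline{\operatorname{co}}\big(G\cdot(C\setminus B_{\varepsilon_0}(x))\big)$ and using that every $g\in G$ acts isometrically on $Y$, one gets $\|Sx\|_Y\le\sup\{\|Sc\|_Y: c\in C,\ \|c-x\|\ge\varepsilon_0\}$ for every $S\in\mathcal{L}^G(X,Y)$ and every $x\in C$.

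Producing the bad operator is the heart of the matter, and here I would transcribe Bourgain's construction. Using the uniform $\varepsilon_0$ repeatedly, build by induction on levels a ``$G$-bush'' inside $C$: a root $x_\emptyset\in C$, finitely many descendants in $C$ realized as convex-plus-$G$-average combinations that are $\varepsilon_0$-spread relative to the orbit-hulls of the earlier levels, and so on; and attach to it functionals so that, with $M:=\sup_n\sup_C|f_n|$, the value $M$ is approached only along the infinite branches while the orbit-hull $\overline{\operatorname{co}}(Gx_s)$ of any single node witnesses a value bounded away from $M$ by a fixed $\eta>0$. Packaging these functionals as $(f_n)$, letting $T\in\mathcal{L}^G(X,Y)$ be the corresponding operator and taking $0<\delta<\eta/3$, the free estimate above (applied to $S\leftrightarrow(\tilde f_n)\in\mathcal{L}^G(X,Y)$ with $\sup_n\|\tilde f_n-f_n\|<\delta$) combined with the built-in gap $\eta$ shows that any candidate maximizer $x_0\in C$ admits $x_1\in C$ with $\|Sx_1\|_Y>\|Sx_0\|_Y$; hence $\sup_{x\in C}\|Sx\|_Y$ is not attained, and $C$ fails the $G$-BP. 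If it streamlines the bookkeeping I would first reduce to $X$ separable (as in \Cref{fact:strongG-dent}), and where the analysis is cleaner on $G$-finite vectors appeal to the Big Peter-Weyl theorem.

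The main obstacle is not the non-dentability geometry — that part is classical — but keeping the whole construction $G$-equivariant: the target, the operator, and every admissible perturbation must respect the action, so one cannot simply quote \cite[Proposition~1]{Bourgain1977} (indeed ``not $G$-dentable'' need not imply ``not dentable'', and ``fails BP'' need not imply ``fails $G$-BP''). Compactness of $G$ is precisely what reconciles this with Bourgain's free-perturbation scheme: it provides a finite Haar measure with which to form the $G$-symmetric convex combinations in the bush; it makes the orbit-hulls $\overline{\operatorname{co}}(Gx)$ compact, so the suprema controlling attainment behave uniformly; and it yields the isometry $\mathcal{L}^G(X,C(G))\cong X^*$ that converts perturbations of $T$ into perturbations of functionals. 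Without it the averaging diverges and the statement genuinely fails — e.g.\ $L_1$ with $G=\Iso(L_1)$ has the $G$-BP (\Cref{cor:ATimpliesG-BP}) while being not even dentable — consistently with \Cref{fact:strongG-dent} and \Cref{remark:G-dentability}(c).
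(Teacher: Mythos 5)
Your strategy (reduce to convex sets, negate $G$-dentability to obtain a uniform $\varepsilon_0$, then exhibit an equivariant operator that no small equivariant perturbation can make attain its supremum on $C$) is the right shape, and several of your preliminary observations check out: the identification $\mathcal{L}^G(X,C(G))\cong X^*$ via $(Sx)(g)=f(g^{-1}x)$, the computation $\|Sx\|=\sup_{z\in Gx}|f(z)|$, and the ``free estimate'' $\|Sx\|\le\sup\{\|Sc\|:c\in C,\ \|c-x\|\ge\varepsilon_0\}$ are all correct. But there is a genuine gap exactly where you write ``here I would transcribe Bourgain's construction'': there is no such construction to transcribe. Bourgain's proof of \cite[Proposition~1]{Bourgain1977} is not a bush-plus-functionals argument into a $c_0$-sum; it is a renorming argument, and the paper follows that route. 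Your claim that one can attach functionals $(f_n)$ to a $G$-bush so that $M=\sup_n\sup_C|f_n|$ is approached only along infinite branches, with each orbit-hull $\overline{\co}(Gx_s)$ bounded away from $M$ by a gap $\eta$ surviving every perturbation of size $<\eta/3$, is precisely the entire difficulty, and it is nowhere carried out. Note that the free estimate alone does not rule out attainment: for a maximizer $x_0$ it only yields $\|Sx_0\|=\sup\{\|Sc\|:\|c-x_0\|\ge\varepsilon_0\}$, which is perfectly consistent with attainment, so everything rests on the unproved gap property. Moreover the choice of a $c_0$-sum target is structurally suspect: sup-type norms lack the coordinatewise rigidity that drives Bourgain's argument (indeed \Cref{Prop-c0-dense} in this paper is a \emph{positive} density result for $c_0$-valued equivariant operators), whereas an $\ell_2$-type sum forces maximizing sequences to localize in each coordinate, which is what produces the contradiction.

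For comparison, the paper's proof (\Cref{thm:G-BP_implies_G-dentable} together with the separable reduction \Cref{lem:G-dent-separablydetermined}) keeps the target equal to $X$ with a new $G$-invariant norm $\vertiii{x}^2=\|x\|^2+\sum_n 2^{-n}\dist(x,Y_n)^2$ and perturbs the identity operator. Compactness of $G$ enters not through the Haar-measure averaging you describe but through the Big Peter--Weyl theorem (\Cref{thm:PW}): it supplies a dense set of $G$-finite vectors, hence finite-dimensional $G$-invariant subspaces $Y_n$, which is exactly what makes the classical renorming $G$-invariant. If the approximating operator $T$ attains its maximum on $C$ at $x$ with $\dist(x,Y_n)$ small, then applying non-$G$-dentability to an $\varepsilon/8$-net of a ball in $Y_n$ forces the maximizing sequence into the region where $\dist(\cdot,Y_n)\ge 7\varepsilon/8$, and the $\ell_2$-structure of $\vertiii{\cdot}$ transfers this lower bound to the images under $T$, a contradiction. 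To salvage your version you would need either to actually prove the robust gap property for your functional bush or to replace the $c_0$-sum by a target whose norm has the required rigidity --- at which point you essentially recover the paper's renorming.
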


In order to obtain \Cref{thm:G-BP_implies_G-dentable-main}, it suffices to prove \Cref{thm:G-BP_implies_G-dentable}, which deals with \emph{separable} $G$-invariant subsets, along with the separable reduction argument provided in \Cref{lem:G-dent-separablydetermined}.

\begin{theorem}\label{thm:G-BP_implies_G-dentable}
Let $G$ be a compact group, $X$ be a $G$-Banach space, and let $C\subseteq X$ be a separable bounded closed convex $G$-invariant set. If $C$ has the $G$-BP, then it is $G$-dentable.
\end{theorem}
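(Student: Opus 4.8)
The plan is to mimic Bourgain's classical argument (\cite[Proposition 1]{Bourgain1977}): assuming $C$ is not $G$-dentable, we produce, for a suitably small $\eps>0$, a $G$-equivariant operator $T$ into a concrete $G$-Banach space $Y$ that cannot be perturbed within $\mathcal{L}^G(X,Y)$ to attain its supremum over $C$, contradicting the $G$-BP. The key point where compactness of $G$ enters is the construction of $Y$ and of the averaging procedure that restores $G$-equivariance: since $G$ is compact it carries a normalized Haar measure, so one can average over $G$ to turn an arbitrary operator (or an arbitrary ``slicing functional'') into a $G$-equivariant one, and, crucially, the Big Peter--Weyl theorem guarantees that the target space built from $C$ can be taken to be a $G$-Banach space rich enough to detect non-attainment.

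First I would fix the setup: suppose $C$ is not $G$-dentable, so by \Cref{fact:equivDefOfStrongGDentability} there is $\eps>0$ such that for every finite $\{x_1,\dots,x_n\}\subseteq C$ and every $x\in C$ we have $x\in\clco_G(C\setminus B_\eps(x_1,\dots,x_n))$. Equivalently, every point of $C$ lies in the closed convex $G$-invariant hull of the part of $C$ that avoids any prescribed finite union of $\eps$-balls. Using separability of $C$, fix a countable dense sequence $(y_k)_k$ in $C$; the non-$G$-dentability then lets us, for each $m$, write each $y_k$ as (a limit of) $G$-convex combinations of points of $C\setminus B_{\eps}(y_1,\dots,y_m)$. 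The goal is to assemble these ``bad'' convex representations into a single operator. I would build $Y$ as a quotient (or subspace) of a direct sum $\left(\sum_m \ell_1(C\setminus B_\eps(\text{finite set}_m))\right)$-type space, equipped with the natural $G$-action permuting the coordinates via the action on $C$; compactness of $G$ ensures this $\ell_1$-sum action is by isometries and continuous, and one passes to the closed $G$-invariant subspace generated by the relevant elements so that $Y$ is genuinely a $G$-Banach space.

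Next I would define $T:X\to Y$ so that $\|T\|$ is computed on $C$ but is not attained: on the dense set one sends $y_k$ to the element of $Y$ recording its approximate convex decomposition, then extends by linearity and continuity; $G$-equivariance of $T$ is arranged by construction (the coordinates of $Y$ are indexed $G$-equivariantly) or, if necessary, by replacing $T$ with its Haar average $g\mapsto\int_G g^{-1}\cdot T(g\cdot{})\,dg$, which is well-defined and bounded precisely because $G$ is compact. The ``no attainment even after small perturbation'' part is the heart of the matter: one shows that for any $S\in\mathcal{L}^G(X,Y)$ with $\|S-T\|<\delta$ (for $\delta$ small relative to $\eps$), and any $x\in C$, the value $\|S(x)\|$ is strictly less than $\sup_{c\in C}\|S(c)\|$, because $x$ is an $\eps$-genuine $G$-convex combination of other points of $C$ on which $S$ is not smaller — this is where one quantifies how the $\eps$-gap forces the supremum to be approached but never reached. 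The projection/quotient structure of $Y$ is chosen exactly so that this strict inequality propagates.

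The main obstacle I anticipate is ensuring that the target space $Y$ is simultaneously (i) a $G$-Banach space with a \emph{continuous} isometric $G$-action — not just an algebraic one — and (ii) still able to witness non-attainment after the Haar-averaging step, since averaging can in principle destroy the delicate $\eps$-separation. Here compactness is used twice over: once to make the $\ell_1$-sum $G$-action continuous (orbits are norm-compact, so the coordinate shifts vary continuously), and once via Peter--Weyl to decompose $Y$ into finite-dimensional $G$-invariant pieces on which the averaging is transparent and the relevant convex-geometric estimates survive. I would also need the inheritance lemma \Cref{fact:sGdentabilityInheritance} to reduce, if convenient, from $C$ to $\clco(C)$ or vice versa, and I expect a short argument (again using compactness of $G$, hence equicontinuity of the action) to pass from the formally weaker version (3) of \Cref{fact:equivDefOfStrongGDentability} back to a statement about $\eps$-balls centered in $C$. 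Once these technical points are settled, the contradiction with the $G$-BP is immediate.
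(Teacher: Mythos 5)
Your overall strategy---negate $G$-dentability to get an $\eps>0$, then exhibit a $G$-equivariant operator into a suitable $G$-Banach space that cannot be perturbed to attain on $C$---is the right Bourgain-style skeleton, and you correctly sense that compactness of $G$ enters through the Peter--Weyl theorem. However, the heart of the proof is missing: you never actually construct the target space $Y$ or the operator $T$, and the construction you sketch (an $\ell_1$-sum indexed by points of $C$ recording approximate convex decompositions, followed by Haar averaging) does not deliver the needed strict inequality. The problem is structural: if $S$ is $G$-equivariant and $x$ is approximated by $\sum_i\lambda_i g_i x_i$ with $x_i\in C$, then $\|S(x)\|\le\sum_i\lambda_i\|S(g_ix_i)\|=\sum_i\lambda_i\|S(x_i)\|\le\sup_{c\in C}\|S(c)\|$, which holds for \emph{every} operator and every point of $C$; nothing in this inequality distinguishes an attaining point from a non-attaining one. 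To rule out attainment you need the target norm to \emph{see} that the points $x_i$ lie far from some reference set while the putative maximizer $x$ lies close to it, and your proposal gives no mechanism for this (``the projection/quotient structure of $Y$ is chosen exactly so that this strict inequality propagates'' is precisely the step that needs to be supplied).

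The paper's proof fills this gap with a renorming, not with a new space or any averaging. Using Peter--Weyl, one picks a dense sequence $(y_n)$ in the closed span of $C$ with each $Y_n:=\overline{\Span}(G\cdot y_n)$ finite-dimensional and $G$-invariant, and defines the equivalent $G$-invariant norm
\[
\vertiii{x}:=\Bigl(\|x\|^2+\sum_{n=1}^\infty 2^{-n}\dist(x,Y_n)^2\Bigr)^{1/2}.
\]
The operator to perturb is the identity $I:(X,\|\cdot\|)\to(X,\vertiii{\cdot})$ (automatically equivariant, so no Haar averaging is needed, and your worry about averaging destroying the $\eps$-separation evaporates). If $T$ attains on $C$ at $x$ with $\|I-T\|<\eps/4$, one picks $n$ with $\dist(x,Y_n)<\eps/4$ and a finite $\eps/8$-net $z_1,\dots,z_m$ of $(1+\eps)B_{Y_n}$; non-$G$-dentability (in the version of \Cref{fact:equivDefOfStrongGDentability} with centers allowed anywhere in $X$---note the centers live in $Y_n$, not in $C$, which is the opposite of the reduction you propose) forces $\vertiii{2T(x)}=\sup_{y\in G\cdot(C\setminus D)}\vertiii{T(x+y)}$ with $D=B_\eps(z_1,\dots,z_m)$. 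The $\ell_2$-sum structure of $\vertiii{\cdot}$ then forces $\dist(T(g_ky_k),Y_n)\to\dist(T(x),Y_n)\le\eps/2$ along a maximizing sequence, while every $y\in C\setminus D$ satisfies $\dist(y,Y_n)\ge 7\eps/8$, hence $\dist(T(g_ky_k),Y_n)\ge 5\eps/8$ --- a contradiction. This distance-to-$Y_n$ bookkeeping is exactly the detection mechanism your proposal lacks.
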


Before proceeding to the proof of \Cref{thm:G-BP_implies_G-dentable}, we need several preparatory lemmas.

\begin{lemma}\label{lem:G.separable}
Let $G$ be a compact group, $X$ be a $G$-Banach space, and let $A\subseteq X$ be a separable subset. Then $G\cdot A$ is separable.
\end{lemma}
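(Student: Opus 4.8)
The plan is to exploit the compactness of $G$ together with the separability of $A$ to produce a countable dense subset of $G\cdot A$. First I would fix a countable dense subset $D\subseteq A$, so that $A\subseteq\overline{D}$. Since the action map $G\times X\to X$ is continuous and isometric in the second variable, for each fixed $d\in D$ the orbit map $g\mapsto g\cdot d$ is continuous from $G$ into $X$; as $G$ is compact, its image $G\cdot d$ is a compact, hence separable, subset of $X$. Let $D_d$ be a countable dense subset of $G\cdot d$, and set $E:=\bigcup_{d\in D}D_d$, which is a countable subset of $G\cdot A$.

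Next I would check that $E$ is dense in $G\cdot A$. Take an arbitrary point $g\cdot a\in G\cdot A$ with $g\in G$, $a\in A$, and let $\eps>0$. Choose $d\in D$ with $\|a-d\|<\eps/2$; since the action is by linear isometries, $\|g\cdot a-g\cdot d\|=\|a-d\|<\eps/2$. Then pick $e\in D_d$ with $\|g\cdot d-e\|<\eps/2$, which is possible because $D_d$ is dense in $G\cdot d\ni g\cdot d$. The triangle inequality gives $\|g\cdot a-e\|<\eps$, so $E$ is dense in $G\cdot A$. Hence $G\cdot A$ is separable.

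This argument is essentially routine; there is no real obstacle, the only point requiring a word of care being that one must pass through the intermediate countable dense set $D$ rather than trying to handle all of $A$ at once, and that one uses isometry of the action to transfer the estimate $\|a-d\|<\eps/2$ to $\|g\cdot a-g\cdot d\|<\eps/2$ uniformly in $g$. (Alternatively, one could note that $G\cdot A$ is the continuous image of the separable space $G\times A$ under the action map, and a continuous image of a separable metric space is separable; but since we only assume $G$ compact and not metrizable, it is cleaner to argue directly via the countable dense set as above.)

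\begin{proof}
Let $D\subseteq A$ be a countable dense subset. For each $d\in D$, the orbit map $G\to X$, $g\mapsto g\cdot d$, is continuous by continuity of the action, so $G\cdot d$ is a compact subset of $X$; in particular it is separable, and we may fix a countable dense subset $D_d\subseteq G\cdot d$. Put $E:=\bigcup_{d\in D}D_d$, a countable subset of $G\cdot A$.

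We claim that $E$ is dense in $G\cdot A$. Let $g\cdot a\in G\cdot A$ (with $g\in G$, $a\in A$) and let $\eps>0$. Pick $d\in D$ with $\|a-d\|<\eps/2$. Since $g$ acts as a linear isometry, $\|g\cdot a-g\cdot d\|=\|a-d\|<\eps/2$. As $g\cdot d\in G\cdot d$ and $D_d$ is dense in $G\cdot d$, there is $e\in D_d\subseteq E$ with $\|g\cdot d-e\|<\eps/2$. Therefore
\[
\|g\cdot a-e\|\leq \|g\cdot a-g\cdot d\|+\|g\cdot d-e\|<\eps.
\]
Since $g\cdot a\in G\cdot A$ and $\eps>0$ were arbitrary, $E$ is dense in $G\cdot A$, and hence $G\cdot A$ is separable.
\end{proof}
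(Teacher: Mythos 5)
Your proof is correct and follows essentially the same route as the paper: the paper takes a countable dense $A'\subseteq A$, notes that $G\cdot A'$ is dense in $G\cdot A$ (which is exactly the isometry estimate you spell out) and that $G\cdot A'$ is $\sigma$-compact, hence separable. You merely make both steps explicit by choosing countable dense subsets of each compact orbit, which is a fine, slightly more detailed rendering of the same argument.
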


Notice that we {\it do not} require that the group $G$ is separable.

\begin{proof}
Let $A'\subseteq A$ be a countable dense subset. Then, clearly $G\cdot A'$ is dense in $G\cdot A$. Since $G\cdot a$ is compact for every $a\in A'$, it follows that $G\cdot A'$ is $\sigma$-compact, therefore separable. Consequently, $G\cdot A$ is separable.
\end{proof}

The following argument generalizes the classical case from \cite{May74}.

\begin{lemma}\label{lem:G-dent-separablydetermined}
Let $G$ be a compact group and $X$ be a $G$-Banach space. Let $C\subseteq X$ be a $G$-invariant subset. If every separable $G$-invariant subset of $C$ is $G$-dentable, then $C$ is itself $G$-dentable.
\end{lemma}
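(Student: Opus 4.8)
The plan is to argue by contradiction. Suppose $C$ is not $G$-dentable; since by hypothesis every separable $G$-invariant subset of $C$ is $G$-dentable, it suffices to produce a \emph{separable} $G$-invariant subset $D\subseteq C$ that is \emph{not} $G$-dentable (the case $C=\emptyset$ being trivial). By \Cref{fact:equivDefOfStrongGDentability} (its condition $(1)$ is precisely the definition of $G$-dentability, and conditions $(1)$ and $(2)$ are equivalent), the failure of $G$-dentability of $C$ gives a single $\varepsilon_0>0$ such that $x\in\clco_G\big(C\setminus B_{\varepsilon_0}(y_1,\dots,y_n)\big)$ for \emph{every} $x\in C$ and every finite tuple $y_1,\dots,y_n\in X$. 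The set $D$ will be engineered to satisfy the same property with radius $\varepsilon_0/2$. Two elementary observations will be used throughout: (i) for any $A\subseteq X$ one has $\clco_G(A)=\overline{\co}(G\cdot A)$, and because $G$ acts by isometries, $\clco_G(g\cdot A)=\clco_G(A)$, $g\cdot B_\varepsilon(y_1,\dots,y_n)=B_\varepsilon(g\cdot y_1,\dots,g\cdot y_n)$, and any $G$-invariant closed convex set contains $x$ iff it contains $g\cdot x$; (ii) since $G$ is compact and acts continuously, every orbit $G\cdot c$ is compact, hence separable, and for any countable $S\subseteq X$ the set $G\cdot S$ is separable and $G$-invariant --- this is \Cref{lem:G.separable}.

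I would then construct $D$ by a standard $\omega$-step closing-off. For each $c'\in C$ fix once and for all a countable set $Q_{c'}$ dense in the orbit $G\cdot c'$, which exists by (ii). Start from any nonempty countable $S_0\subseteq C$. Given a countable $S_k\subseteq C$, form $S_{k+1}\supseteq S_k$ as follows: for every finite tuple $c,c_1,\dots,c_n$ of elements of $S_k$, every $(y_1,\dots,y_n)\in Q_{c_1}\times\dots\times Q_{c_n}$, and every $m\in\N$, use that $c\in C$, together with the choice of $\varepsilon_0$ and (i), to pick a finite convex combination $w=\sum_i\lambda_i\,\gamma_i\cdot b_i$ with $\gamma_i\in G$, $b_i\in C\setminus B_{\varepsilon_0}(y_1,\dots,y_n)$ and $\|c-w\|<1/m$; adjoin all of these (countably many) points $b_i$ to $S_{k+1}$. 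Then $S:=\bigcup_k S_k$ is a countable subset of $C$, and $D:=G\cdot S$ is a separable $G$-invariant subset of $C$ by (ii).

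To conclude, I would verify that $D$ is not $G$-dentable. Fix a finite $F\subseteq D$ and a point $x\in D$, and write $x=g\cdot c$ and $F=\{g_1\cdot c_1,\dots,g_n\cdot c_n\}$ with $c,c_1,\dots,c_n\in S$ and $g,g_1,\dots,g_n\in G$. By (i), the $G$-invariant set $\clco_G\big(D\setminus B_{\varepsilon_0/2}(F)\big)$ contains $x$ iff it contains $c$, and it equals $\clco_G\big(g^{-1}\cdot(D\setminus B_{\varepsilon_0/2}(F))\big)=\clco_G\big(D\setminus B_{\varepsilon_0/2}(h_1\cdot c_1,\dots,h_n\cdot c_n)\big)$ where $h_j:=g^{-1}g_j$. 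Now pick $y_j\in Q_{c_j}$ with $\|y_j-h_j\cdot c_j\|<\varepsilon_0/2$ for each $j$; then $B_{\varepsilon_0/2}(h_j\cdot c_j)\subseteq B_{\varepsilon_0}(y_j)$, hence $D\setminus B_{\varepsilon_0}(y_1,\dots,y_n)\subseteq D\setminus B_{\varepsilon_0/2}(h_1\cdot c_1,\dots,h_n\cdot c_n)$, so it is enough to prove $c\in\clco_G\big(D\setminus B_{\varepsilon_0}(y_1,\dots,y_n)\big)$. For this, choose $k$ with $c,c_1,\dots,c_n\in S_k$; the points $w_m$ selected at stage $k$ for this data satisfy $b_i\in S\setminus B_{\varepsilon_0}(y_1,\dots,y_n)\subseteq D\setminus B_{\varepsilon_0}(y_1,\dots,y_n)$, so $w_m\in\co\big(G\cdot(D\setminus B_{\varepsilon_0}(y_1,\dots,y_n))\big)\subseteq\clco_G\big(D\setminus B_{\varepsilon_0}(y_1,\dots,y_n)\big)$ and $w_m\to c$, giving $c$ in this closed set. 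As $F$ and $x$ were arbitrary, $D$ is not $G$-dentable, which is the desired contradiction.

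The step I expect to be the main obstacle is the handling of the centers $\{g_1\cdot c_1,\dots,g_n\cdot c_n\}\subseteq D$: since $G$ need not be metrizable or separable, a naive closing-off would have to anticipate an uncountable family of such center sets. What makes the argument go through is the combination of (a) compactness of $G$, which turns each orbit $G\cdot c_j$ into a separable metric space and thus provides the countable dense sets $Q_{c_j}$; (b) the $G$-invariance of $\clco_G(\cdot)$, which lets one transport $x$ back into the countable core $S$ while transferring the group elements onto the centers; and (c) absorbing the resulting approximation error by passing from radius $\varepsilon_0$ to $\varepsilon_0/2$. The remaining points --- that each $S_{k+1}$ stays countable, that $\bigcup_k S_k\subseteq C$, and that $C\setminus B_{\varepsilon_0}(y_1,\dots,y_n)$ is nonempty (which is immediate, since $c$ lies in its closed convex $G$-hull) --- are routine bookkeeping.
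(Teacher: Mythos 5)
Your proof is correct and follows essentially the same route as the paper's: argue by contradiction, build a countable core inside $C$ by an $\omega$-step closing-off that records witnesses to $x\in\clco_G\bigl(C\setminus B_{\varepsilon_0}(y_1,\dots,y_n)\bigr)$, use compactness of $G$ (via \Cref{lem:G.separable} and separability of orbits) to keep everything separable and $G$-invariant, and absorb the approximation of centers by passing from $\varepsilon_0$ to $\varepsilon_0/2$. The only cosmetic difference is bookkeeping: you approximate centers by countable dense subsets of individual orbits and transport the test point by a group element, whereas the paper takes countable dense subsets of the accumulated separable sets $D_n$ and approximates both the point and the centers there.
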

\begin{proof}
Suppose that $C$ is not $G$-dentable. Then, there exists $\varepsilon>0$ such that, for every finite tuple $x_1,\ldots,x_n\in C$, we have $C \subseteq \overline{\co}_G\big(C\setminus B_\varepsilon(x_1,\ldots,x_n)\big)$. Therefore, for every $x\in C$ and every finite tuple $x_1,\ldots,x_n\in C$, there exists a separable set $E^x_{\{x_1,\ldots,x_n\}}\subseteq C\setminus B_\varepsilon(x_1,\ldots,x_n)$ such that \[x\in \overline{\co}_G (E^x_{\{x_1,\ldots,x_n\}})=\overline{\co} (G\cdot E^x_{\{x_1,\ldots,x_n\}}).
\] Notice that by \Cref{lem:G.separable}, the set $G\cdot E^x_{\{x_1,\ldots,x_n\}}$ is separable and it is also $G$-invariant.

Now given any separable $G$-invariant subset $E$ of $C$ with a countable dense subset $\{e_n\}_{n\in\N}$, we define a set $\hat E$, omitting the countable dense subset from the notation, as follows. First let $(\vec e_n)_{n\in\N}$ be an enumeration of all finite tuples of $\{e_n\}_{n\in\N}$.  Set \[\hat E:=\bigcup_{i\in\N,\; j\in\N} G\cdot E^{e_i}_{\vec e_j}.\] Notice that $\hat E$ is still separable and $G$-invariant.

We recursively define $G$-invariant separable sets $D_n$ increasing with respect to inclusion, $n\geq 0$. Pick any $x\in C$ and set $D_0:=\{g\cdot x\colon g\in G\}$. Suppose that $D_n$ has been defined. Choose a countable dense subset $\{d^n_i\}_{i\in\N}$ of $D_n$ that contains $\{d^{n-1}_i\}_{i\in\N}$ when $n\geq 1$, and consider the set $\hat D_n$ with respect to $\{d^n_i\}_{i\in\N}$. Then, we define 
\[D_{n+1}:=\hat D_n \cup D_n.
\] 
Proceeding in this way, we obtain $D_0 \subseteq\ldots\subseteq D_n\subseteq \ldots$.

Finally, we set $\mathsf{D}:=\bigcup_n D_n$, which is a  $G$-invariant and separable subset of $C$ as a countable union of $G$-invariant separable sets. Set also $\mathcal{D}:=\{d_i^j\colon i,j\in\N\}$ which is a countable dense subset of $\mathsf{D}$. Let us check that $\mathsf{D}$ is not $G$-dentable, which will yield the desired contradiction. This is witnessed by $\varepsilon/2$. 
Suppose there exist a finite tuple $x'_1,\ldots,x'_n\in \mathsf{D}$ and $x\in\mathsf{D}$ such that $x\notin \overline{\co}_G\big(\mathsf{D}\setminus B_{\varepsilon/2}(x'_1,\ldots,x'_n)\big)$. Since $\mathsf{D}\setminus \overline{\co}_G\big(\mathsf{D}\setminus B_{\varepsilon/2}(x'_1,\ldots,x'_n)\big)$ is relatively open in $\mathsf{D}$, we may suppose that $x\in\mathcal{D}$. Moreover, we can find $x_1,\ldots,x_n\in \mathcal{D}$ such that $B_{\varepsilon/2}(x'_1,\ldots,x'_n)\subseteq B_\varepsilon(x_1,\ldots,x_n)$, and thus $x\notin \overline{\co}_G\big(\mathsf{D}\setminus B_\varepsilon(x_1,\ldots,x_n)\big)$. Since there is $m\in\N$ such that $x,x_1,\ldots,x_n\in \{d_i^m\}_{i\in\N}$, by definition $E^x_{\{x_1,\ldots,x_n\}}\subseteq D_{m+1}\subseteq \mathsf{D}$, we get  
\[
x\in\overline{\co}_G (E^x_{\{x_1,\ldots,x_n\}}) \subseteq \clco_G (\mathsf{D} \setminus B_{\varepsilon} (x_1,\ldots,x_n)),
\] which is the promised contradiction.
\end{proof}

\begin{corollary}\label{cor:weakerstrongG-dent}
Let $G$ be a compact topological group. A $G$-Banach space $X$ is strongly $G$-dentable if and only if every separable bounded closed absolutely convex $G$-invariant set is $G$-dentable.
\end{corollary}
\begin{proof}
One implication is obvious. Suppose that every separable bounded closed absolutely convex $G$-invariant set is $G$-dentable and let $C\subseteq X$ be an arbitrary bounded $G$-invariant set. By \Cref{lem:G-dent-separablydetermined}, it suffices to consider the case where $C$ is separable. Suppose, toward a contradiction, that $C$ is not $G$-dentable. Then obviously $D:=C\cup (-C)$ is not $G$-dentable either. By \Cref{fact:sGdentabilityInheritance}, $E:=\clco(D)$ is also not $G$-dentable. On the other hand, $E$ is separable, bounded, closed, absolutely convex, and $G$-invariant, contradicting the assumption. 
\end{proof}

We also need the theorem below implied by the Big Peter-Weyl theorem. Given a topological group $G$ acting by linear isometries on a Banach space $X$, we say that $x \in X$ is \emph{almost $G$-invariant} (or \emph{$G$-finite}) if $\Span (G\cdot x)$ is a finite-dimensional space. The set of all almost $G$-invariant elements of $X$ will be denoted by $X_\mathrm{fin}$. 

\begin{theorem}[A version of the Big Peter-Weyl Theorem, {\cite[Theorem 3.51]{HofMorbook}}]\label{thm:PW}
    Let $G$ be a compact group and $X$ be a $G$-Banach space. Then, we have that
    $X= \overline{X_{\mathrm{fin}}}$. 
\end{theorem}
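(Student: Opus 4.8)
The plan is to prove the formally stronger statement that the set $X_{\mathrm{fin}}$ of $G$-finite vectors is dense in $X$, by averaging the action against functions on $G$. Equip the compact group $G$ with its normalized Haar measure $dg$. For $\phi\in C(G)$ and $x\in X$, joint continuity of the action makes $g\mapsto\phi(g)\,(g\cdot x)$ a continuous map $G\to X$, so (as $G$ is compact and $X$ is complete) the Bochner integral
\[
\pi(\phi)x:=\int_G\phi(g)\,(g\cdot x)\,dg
\]
is well defined, linear in $\phi$, and satisfies $\|\pi(\phi)x\|\leq\|\phi\|_\infty\|x\|$.

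The key algebraic point is the equivariance identity. Using left-invariance of the Haar measure, for every $h\in G$,
\[
h\cdot\pi(\phi)x=\int_G\phi(g)\,(hg)\cdot x\,dg=\int_G\phi(h^{-1}g)\,(g\cdot x)\,dg=\pi(L_h\phi)x,
\]
where $(L_h\phi)(g):=\phi(h^{-1}g)$. Hence, writing $V:=\operatorname{span}\{L_h\phi:h\in G\}$, every element of the orbit $G\cdot\pi(\phi)x$ lies in the image of the linear map $V\ni v\mapsto\pi(v)x$; since this image has dimension at most $\dim V$, we conclude that if $V$ is finite dimensional, then $\operatorname{span}(G\cdot\pi(\phi)x)$ is finite dimensional, i.e. $\pi(\phi)x\in X_{\mathrm{fin}}$. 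Functions $\phi\in C(G)$ with $\dim V<\infty$ are precisely the \emph{representative functions} of $G$ (the matrix coefficients of finite dimensional representations), so $\pi(\psi)x\in X_{\mathrm{fin}}$ for every representative function $\psi$.

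It remains to approximate $x$ by such averages, for which I would use an approximate identity. Given $\varepsilon>0$, continuity of the action at $1_G$ yields an open neighborhood $U\ni 1_G$ with $\|g\cdot x-x\|<\varepsilon$ for $g\in U$; since $G$ is compact Hausdorff, hence normal, Urysohn's lemma provides a continuous $\rho\colon G\to[0,1]$ with $\rho(1_G)=1$ and $\rho\equiv 0$ off $U$, and we set $\phi:=\rho/\int_G\rho\,dg$ (the denominator is positive, being the Haar integral of a nonnegative continuous function which is positive at $1_G$, hence on a nonempty open set of positive Haar measure). Then $\phi\geq 0$, $\int_G\phi\,dg=1$, and $\phi$ vanishes off $U$, so
\[
\|\pi(\phi)x-x\|=\Big\|\int_G\phi(g)\,(g\cdot x-x)\,dg\Big\|\leq\int_G\phi(g)\,\|g\cdot x-x\|\,dg\leq\varepsilon.
\]
By the classical Peter--Weyl theorem the representative functions are uniformly dense in $C(G)$ (for real-valued functions one takes real and imaginary parts of complex matrix coefficients, which are again representative), so we may choose a representative function $\psi$ with $\|\psi-\phi\|_\infty<\varepsilon/(1+\|x\|)$; then $\pi(\psi)x\in X_{\mathrm{fin}}$ and $\|\pi(\psi)x-x\|\leq\|\psi-\phi\|_\infty\|x\|+\|\pi(\phi)x-x\|<2\varepsilon$. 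As $\varepsilon>0$ and $x\in X$ were arbitrary, $X=\overline{X_{\mathrm{fin}}}$.

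The only substantial ingredient is the classical Peter--Weyl theorem (density of representative functions in $C(G)$); the remaining steps---well-definedness of the vector-valued averages, the equivariance identity, and the approximate-identity estimate---are routine, the points deserving some care being the real/complex bookkeeping when quoting Peter--Weyl and the construction of the bump function $\rho$ on a general (possibly non-metrizable) compact Hausdorff group.
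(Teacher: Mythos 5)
Your proof is correct. Note that the paper does not prove this statement at all: it is quoted as \cite[Theorem 3.51]{HofMorbook} and used as a black box, so there is no in-paper argument to compare against. Your argument --- smoothing $x$ by the vector-valued convolution $\pi(\phi)x=\int_G\phi(g)(g\cdot x)\,dg$, observing via left-invariance that $h\cdot\pi(\phi)x=\pi(L_h\phi)x$ so that $\pi(\psi)x$ is $G$-finite whenever $\psi$ is a representative function, and then combining an approximate identity with the uniform density of representative functions from the classical Peter--Weyl theorem --- is precisely the standard proof of the ``Big'' Peter--Weyl theorem, essentially as it appears in Hofmann--Morris. All the steps check out, including the points you flag as needing care: the Bochner integral is well defined because the integrand is continuous on a compact space (hence separably valued and bounded), the positivity of $\int_G\rho\,dg$ follows from strict positivity of Haar measure on nonempty open sets, and passing from complex matrix coefficients to real representative functions is harmless since $L_h$ commutes with taking real parts, so the real part of a representative function is again representative.
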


Now we are ready to prove \Cref{thm:G-BP_implies_G-dentable}. The argument is inspired by the proof of \cite[Proposition~1]{Bourgain1977}.
It is worth emphasizing, however, that our proof diverges substantially from Bourgain’s original approach: in our $G$-equivariant framework the use of the Peter–Weyl theorem is essential, as it provides sufficiently many almost $G$-invariant elements that serve as the key ingredients for the renorming argument in the proof. 

\begin{proof}[Proof of \Cref{thm:G-BP_implies_G-dentable}] 
Let us assume that $C\subseteq B_X$ and that it is not $G$-dentable. By \Cref{fact:equivDefOfStrongGDentability}, there exists $\varepsilon>0$ such that, for every finite set $\{x_1,\ldots, x_n\}$ in $X$, we have 
\[
C \subseteq \clco_G (C \setminus B_\varepsilon (x_1,\ldots,x_n) ). 
\]
Let $Y\subseteq X$ be the separable closed subspace generated by $C$, which is $G$-invariant. Thanks to \Cref{thm:PW}, we can choose a countable dense sequence $(y_n)_{n\in\N}\subseteq Y$ such that for every $n\in\N$ the space
\[
Y_n:=\overline{\Span}\big\{g \cdot y_n\colon g\in G\big\}
\] is finite-dimensional. We define a new norm $\vertiii{\cdot}$ on $X$ as follows  
\begin{equation*} 
\vertiii{x}:=\sqrt{\|x\|^2+\sum_{n=1}^\infty \frac{\dist(x,Y_n)^2}{2^{n}}},\quad x\in X.
\end{equation*} 
It is straightforward to check that $\vertiii{\cdot}$ is an equivalent norm on $X$ which is still $G$-invariant, i.e., the action of $G$ on $(X,\vertiii{\cdot})$ is still implemented by linear isometries, since each $Y_n$ is $G$-invariant.

Consider the identity operator $I:(X,\|\cdot\|)\to (X,\vertiii{\cdot})$. Since $I$ is $G$-equivariant and $C$ has the $G$-Bishop-Phelps property, there exists a $G$-equivariant bounded linear operator $T:(X,\|\cdot\|)\to(X,\vertiii{\cdot})$ which attains the maximum on $C$ at an element $x\in C$ and satisfies $\|I-T\|<\varepsilon/4$. Choose $n\in\N$ such that $\dist(x,Y_n)<\varepsilon/4$. Let $z_1,\ldots,z_m\in Y_n$ be an $\varepsilon/8$-net in $(1+\varepsilon)B_{Y_n}$ noting that $B_{Y_n}$ is compact since $Y_n$ is finite-dimensional.

By the choice of $\varepsilon>0$, we obtain that 
\begin{equation*} 
C \subseteq \overline{\co}_G(C\setminus B_\varepsilon(z_1,\ldots,z_m)).
\end{equation*} 
Set $D:=B_\varepsilon(z_1,\ldots,z_m)$. We claim that 
\[
\vertiii{2 T(x)}=\sup_{y\in G\cdot(C\setminus D)} \vertiii{T(x+y)}.
\] 
Indeed, since $C \subseteq \overline{\co}_G(C\setminus D)$, for every $\delta>0$, there exist $x_1,\ldots,x_k\in C\setminus D$, $g_1,\ldots,g_k\in G$, and $\lambda_1,\ldots,\lambda_k \geq 0$ such that $\sum_{i=1}^k \lambda_i=1$ and $\|x-\sum_{i=1}^k \lambda_i g_i x_i\|<\delta$. Hence, there exists $i \in \{1,\ldots,k\}$ such that 
\[ 
\big| \vertiii{  2T(x)} - \vertiii{T( x+g_i \cdot x_i )} \big|   <\delta\|T\|,
\]
which completes the claim, since $\delta>0$ was arbitrary.

Choose sequences $(u_k)_{k\in\N}\subseteq C\setminus D$ and $(g_k)_{k\in\N}\subseteq G$ satisfying 
\begin{equation}\label{eq:vertiii1}
    \vertiii{2T(x)}=\lim_{k\to\infty} \vertiii{T(x+g_k \cdot u_k)}.
\end{equation} 
Note that for every $z\in X$, $\vertiii{z}$ is precisely the $\ell_2$-norm of 
\[
\left( \|z\|, \frac{\dist (z, Y_1)}{2^{1/2}}, \frac{\dist (z, Y_2)}{2}, \ldots,\frac{\dist (z, Y_n)}{2^{n/2}},\ldots  \right)
\]
It is well-known, for $a\in\ell_2$, $(b_k)_k\subseteq \ell_2$, that 
\begin{equation}\label{eq:vertiii2}
\|a+b_k\|_{\ell_2} \to 2\|a\|_{\ell_2} \quad \text{and} \quad \|b_k\|_{\ell_2} \leq \|a\|_{\ell_2}\quad\text{implies}\quad    \|b_k-a\|_{\ell_2} \to 0. 
\end{equation}

It follows from \eqref{eq:vertiii1} and \eqref{eq:vertiii2} that 
\begin{equation*} 
\dist(T(x),Y_n)=\lim_{k\to\infty} \dist(T(g_k \cdot u_k),Y_n)
\end{equation*}
for every $n\in\mathbb{N}$.
However, notice that for every $y\in C\setminus D$, we have $\dist(y,Y_n)\geq \frac{7\varepsilon}{8}$. In fact, otherwise there would be $y'\in Y_n$ such that $\|y-y'\|<7\varepsilon/8$. In particular, this implies that $\|y'\| \leq \|y-y'\| +\|y\| <1+\varepsilon.$ Thus, there exists $i \in \{1,\ldots,m\}$
such that $\|y'- z_i\|<\varepsilon/8$, thus $y\in B_\eps(z_i)\subseteq D$, a contradiction. Therefore, we obtain that 
\begin{align*}
\dist(T(x),Y_n) &\leq \|T(x)-x\|+\dist(x,Y_n) \\
&\leq \vertiii{T(x)-x}+\dist(x,Y_n) \leq \frac{\e}{4} + \frac{\e}{4} = \frac{\e}{2} 
\end{align*}
which yields the following contradiction
\begin{align*}
\dist(T(g_k u_k),Y_n) &= \dist(T(u_k),Y_n) \\
&\geq \dist(u_k,Y_n)-\|T(u_k)-u_k\| \\
&\geq \dist (u_k,Y_n)-\vertiii{T(u_k)-u_k} 
\geq \frac{7\varepsilon}{8}-\frac{\varepsilon}{4}=\frac{5\varepsilon}{8}.
\end{align*}
As a consequence, $C$ is $G$-dentable.
\end{proof}

\begin{proof}[Proof of \Cref{thm:G-BP_implies_G-dentable-main}]
By \Cref{cor:weakerstrongG-dent}, it is enough to check that every separable bounded closed absolutely convex $G$-invariant set $C$ is $G$-dentable. By assumption, $C$ has $G$-BP, so by \Cref{thm:G-BP_implies_G-dentable}, it is $G$-dentable.
\end{proof}

We conclude this section with the following result. We note that we do not know how to prove it directly; instead, the argument requires a substantial detour through the results established above.

\begin{corollary}\label{cor:G-BPimpliesBP} Let $G$ be a compact group and $X$ be a $G$-Banach space. If $X$ has the $G$-BP, then $X$ has the BP.
\end{corollary}

\begin{proof}  Assume that $X$ has the $G$-BP. Since $G$ is compact, $X$ is strongly $G$-dentable by \Cref{thm:G-BP_implies_G-dentable-main}. So $X$ is weakly $G$-dentable which implies being dentable by \Cref{prop:weakly-G-dentable-and-dentable-are-equivalent}. Hence, by the classical result \cite[Theorem 7]{Bourgain1977}, $X$ has the BP.
\end{proof}

 \section{Consequences of $G$-dentability}\label{section:G-dent}

In the classical setting, dentability is equivalent to the Radon–Nikodým and Bishop–Phelps properties, and moreover, it implies the Krein–Milman property. In this section, we show that strong $G$-dentability implies the $G$-KMP for compact $G$, and that weak $G$-dentability implies the $G$-RNP for any $G$, in the two respective subsections. It remains open whether strong or weak $G$-dentability implies the $G$-BP for any non-trivial $G$.

\subsection{Strong $G$-dentability implies $G$-KMP} \label{subsection:strongly-G-dentable-G-KMP} In this subsection, our main goal is to connect the strong $G$-dentability and the $G$-Krein-Milman property of a $G$-Banach space $X$. More specifically, we will prove the following result.

\begin{theorem} \label{theorem:strong-G-dentable-implies-G-KMP} Let $G$ be a compact group and $X$ be a $G$-Banach space. If $X$ is strongly $G$-dentable, then $X$ has the $G$-KMP.
\end{theorem}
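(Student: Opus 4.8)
The classical route (Lindenstrauss) shows that in a dentable set every nonempty slice contains extreme points, and then a Zorn/Choquet-type argument produces enough extreme points to recover the set as a closed convex hull. The plan is to adapt this to the $G$-equivariant setting, with the crucial modification that ``slices cut out by a single functional'' must be replaced by ``$G$-invariant pieces removed by finitely many balls,'' exactly the kind of pieces produced by \Cref{def:G-dent_new}. So I would first fix a closed bounded convex $G$-invariant $C\subseteq X$ and a point $x_0\in C$, and aim to show $x_0\in\clco_G(\ext C)=\clco(\ext C)$ (the equality being already noted in the paper). Suppose not; then there is $\e>0$ with $B_\e(x_0)\cap\clco(\ext C)=\emptyset$, and since $\clco(\ext C)$ is closed convex we can separate it from the ball by a functional, producing an open half-space $H$ with $x_0\in H$ and $H\cap\ext C=\emptyset$ after shrinking; equivalently there is an open convex $G$-invariant-\emph{able} neighbourhood we can work with. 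The heart of the matter is to derive a contradiction from the existence of a nonempty $G$-invariant relatively open ``$G$-slice'' of $C$ containing no extreme points.

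**Extreme points via $G$-dentability.** The key lemma I would isolate is: if $C$ is a closed bounded convex $G$-invariant set and $U\subseteq X$ is open with $C\cap U\neq\emptyset$ and such that $C\neq\clco_G(C\setminus U)$ — i.e. $C\cap U$ is a genuine ``$G$-slice'' — then $C\cap U$ contains an extreme point of $C$. To prove this, pick $z\in C\setminus\clco_G(C\setminus U)$ and, using strong $G$-dentability applied to the set $C$ (or to suitable subsets produced along the way) together with \Cref{fact:equivDefOfStrongGDentability}, run a transfinite/Zorn argument on the family of closed convex $G$-invariant subsets $F\subseteq C$ with $z\in\clco_G(F)$, ordered by reverse inclusion; a minimal such $F$ must then be a single orbit's closed convex hull that is actually $G$-dentable with arbitrarily small $G$-slices, forcing (as in the proof of \Cref{fact:strongG-dent}, where the same ``$Gx\subseteq\clco_G C$ forces $C$ cofinal'' phenomenon appears) that $F$ is a point, and that point is extreme and lies in $C\cap U$. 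Here is exactly where compactness of $G$ enters, via strong $G$-dentability being available at all (\Cref{remark:G-dentability}(c)), and where I would lean on \Cref{fact:sGdentabilityInheritance} to pass $G$-dentability down to the $G$-invariant subsets that appear.

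**Assembling the closed convex hull.** With the key lemma in hand, the contradiction is immediate: the separating functional above yields a nonempty $G$-invariant relatively open slice $C\cap U$ with $U\cap\ext C=\emptyset$; but $C\neq\clco_G(C\setminus U)$ because $x_0$ (or rather a point in $C\cap U$) witnesses it — one should check $C\setminus U\subseteq\{x: f(x)\le c\}$ for the separating $f$ and constant $c$, which is $G$-invariant only after averaging $f$ over $G$ by the Haar measure, so I would replace $f$ by $\bar f(x)=\int_G f(g\cdot x)\,dg$, note $\bar f$ is $G$-invariant and still separates $x_0$ from $\clco(\ext C)$ provided the separation was set up so that $f\le c$ on $\clco(\ext C)$ (which is $G$-invariant, so averaging preserves the estimate) and $\bar f(x_0')>c$ for some $x_0'\in G\cdot x_0$ — a compactness-of-$G$ averaging step that is the one genuinely new ingredient. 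Then $C\cap\{\bar f>c\}$ is the required $G$-slice, the key lemma gives an extreme point inside it, contradicting $U\cap\ext C=\emptyset$. Hence $x_0\in\clco(\ext C)$ for all $x_0\in C$, i.e. $C=\clco_G(\ext C)$, so $X$ has the $G$-KMP.

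**Main obstacle.** I expect the delicate point to be the extreme-point lemma: making the minimal-set / Zorn argument actually produce an extreme point of the \emph{big} set $C$ rather than merely an extreme point of some sub-convex-set, in a way that genuinely uses the finite-union-of-small-balls form of $G$-dentability rather than a single functional. In the classical proof one uses that slices of slices are slices; here the analogue is that a ``$G$-slice of a $G$-slice'' is again of the form $C\setminus(\text{finite union of }\e\text{-balls})$, and keeping the diameters shrinking while staying $G$-invariant (so that the intersection of a decreasing chain is a single orbit, then a point) is where I would be most careful, again invoking \Cref{fact:equivDefOfStrongGDentability} and \Cref{fact:sGdentabilityInheritance} repeatedly.
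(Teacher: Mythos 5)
There are two genuine gaps in this proposal. The first is the Haar-averaging step: since $\bar f(hx)=\int_G f(ghx)\,dg=\bar f(x)$ for every $h\in G$, the averaged functional is constant on each orbit, hence constant on $\clco(G\cdot x_0)$. But $x_0\notin\clco(\ext C)$ only guarantees that the \emph{orbit} $G\cdot x_0$ misses $\clco(\ext C)$ (by $G$-invariance of the latter); the closed convex hull $\clco(G\cdot x_0)$ may well intersect $\clco(\ext C)$, and whenever it does, every $G$-invariant continuous linear functional takes the value $\bar f(x_0)$ at some point of $\clco(\ext C)$, so no invariant functional can separate and your $G$-invariant slice $\{\bar f>c\}$ need not contain $x_0$ at all. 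The paper sidesteps this entirely: it separates with an ordinary, non-invariant functional, perturbs it via Bishop--Phelps so that it attains its supremum on $D$, and applies an extreme-point existence result (\Cref{proposition:existence-of-ext-pts}) to the resulting face $F=\{x\in D: x^*(x)=\sup x^*(D)\}$, which is \emph{not} $G$-invariant --- the point being that the hypothesis of that proposition only concerns the $G$-invariant subsets of $\clco_G(F)$, and those are $G$-dentable for free by strong $G$-dentability of $X$.

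The second gap is the Zorn/minimal-set argument in your key lemma. A decreasing chain of nonempty closed bounded convex sets in a general Banach space can have empty intersection (there is no weak compactness available), so chains need not have lower bounds; and even granting a minimal element, a minimal closed convex $G$-invariant subset is typically the closed convex hull of an orbit rather than a singleton --- it collapses to a point only at a fixed point of the action. Nor is such a minimal set a face of $C$, so extremality in $C$ would not propagate back. The paper's substitute for both difficulties is quantitative: it builds a countable decreasing sequence of faces $D_0\supseteq D_1\supseteq\cdots$, using $G$-dentability of $G\cdot D_n$ (via \Cref{fact:sGdentabilityInheritance}) together with Hahn--Banach and Bishop--Phelps to force $D_{n+1}$ into a finite union of small balls, so that each $D_n$ admits a finite $2^{-n}$-net; Kuratowski's intersection theorem (\Cref{lma:intersectionOfAlmostCpcts}) then makes $\bigcap_n D_n$ nonempty and compact, the classical Krein--Milman theorem produces an extreme point there, and the iterated face structure carries extremality back up to $D$. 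Your instinct that the finite-union-of-small-balls form of $G$-dentability is the engine is correct, but the needed compactness has to be manufactured through finite nets along a sequence of faces, not extracted from a minimality principle.
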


Let us emphasize once again that the assumption of compactness of the group $G$ arises from the fact that strong $G$-dentability is only meaningful when $G$ is (pre)compact (see \Cref{fact:strongG-dent}). To prove \Cref{theorem:strong-G-dentable-implies-G-KMP}, we will show that if $D$ is a nonempty closed bounded convex subset of a $G$-Banach space $X$ such that every closed bounded convex $G$-invariant subset of $\clco_G (D)$ is $G$-dentable, then $D$ must contain an extreme point (see \Cref{proposition:existence-of-ext-pts} below). The proof of this last fact draws on ideas from a classical argument due to J. Lindenstrauss establishing that every nonempty closed bounded convex subset of a Banach space with the RNP contains an extreme point (see, for instance, \cite[Theorem 5, p.~190]{DU1977}). More precisely, we will prove the following result.

\begin{proposition} \label{proposition:existence-of-ext-pts} 
    Let $D$ be a~nonempty convex closed bounded subset of a $G$-Banach space $X$. Assume that every convex closed bounded $G$-invariant subset of $\clco_G(D)$ is $G$-dentable. Then, $D$ has an~extreme point.
\end{proposition}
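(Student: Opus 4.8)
The plan is to deduce \Cref{proposition:existence-of-ext-pts} from the classical theorem of Lindenstrauss that a nonempty closed bounded convex set, \emph{all of whose closed bounded convex subsets are dentable in the usual sense}, contains an extreme point --- see \cite[Theorem 5, p.~190]{DU1977}, whose proof is \emph{local} to the set in question and hence applies in this form. The real content of \Cref{proposition:existence-of-ext-pts} is then to bridge the gap between the hypothesis, which only asserts $G$-dentability of $G$-\emph{invariant} closed bounded convex subsets of the enlargement $\clco_G(D)$, and the ordinary dentability of \emph{all} closed bounded convex subsets of $D$ that the classical argument needs.

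The key step is therefore the following claim: every closed bounded convex subset of $\clco_G(D)$ --- and in particular every closed bounded convex subset of $D$, since $D\subseteq\clco_G(D)$ --- is dentable. Fix such a subset $E$ and an $\eps>0$. The orbit $G\cdot E$ is a bounded $G$-invariant set, its closed convex hull $\clco(G\cdot E)$ is closed, convex and $G$-invariant (as $G$ acts by surjective isometries) and hence equals $\clco_G(E)$, which is a closed bounded convex $G$-invariant subset of $\clco_G(D)$; so by hypothesis $\clco_G(E)$ is $G$-dentable. Applying \Cref{fact:sGdentabilityInheritance} to the $G$-invariant set $G\cdot E$ then gives that $G\cdot E$ is $G$-dentable, hence --- being $G$-invariant --- classically dentable. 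Pick $y\in G\cdot E$ with $y\notin\clco\big((G\cdot E)\setminus B_\eps(y)\big)$ and write $y=g\cdot e_0$ with $g\in G$ and $e_0\in E$. Since $g^{-1}$ is a surjective linear isometry it commutes with $\clco$, fixes $G\cdot E$ setwise, and maps $B_\eps(y)$ onto $B_\eps(e_0)$; applying it to the displayed non-membership yields
\[
e_0\notin\clco\big((G\cdot E)\setminus B_\eps(e_0)\big)\supseteq\clco\big(E\setminus B_\eps(e_0)\big),
\]
so $E$ is dentable at scale $\eps$. As $\eps>0$ was arbitrary, $E$ is dentable, which proves the claim.

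Granting the claim, $D$ is a nonempty closed bounded convex set all of whose closed bounded convex subsets are dentable, so the classical result produces an extreme point of $D$. (If one prefers not to use this as a black box, one can reproduce its proof directly inside $D$: run the usual iterated-slicing construction, building a decreasing sequence $D=D_0\supseteq D_1\supseteq\cdots$ of closed bounded convex sets, each $D_{n+1}$ obtained from a slice of $D_n$ via Hahn--Banach separation with the standard width bookkeeping, arranged so that $\diam D_n\to 0$, and then checking that the unique point of $\bigcap_n D_n$ is extreme in $D$.) The main obstacle is exactly the reduction of the previous paragraph: the hypothesis restricts only $G$-invariant subsets of $\clco_G(D)$, whereas the extreme-point machinery must slice inside $D$ itself and its non-invariant closed convex subsets, and the passage between the two is precisely what \Cref{fact:sGdentabilityInheritance}, combined with the transport of a denting point of $G\cdot E$ back into $E$ by a group element (using that $G$ acts by surjective isometries), is designed to accomplish. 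Everything after that is classical.
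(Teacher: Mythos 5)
Your argument is correct, but it takes a genuinely different route from the paper's. You first upgrade the hypothesis to the statement that \emph{every} closed bounded convex subset $E$ of $\clco_G(D)$ is classically dentable, via the chain $\clco_G(E)=\clco(G\cdot E)$ is $G$-dentable by hypothesis $\Rightarrow$ $G\cdot E$ is $G$-dentable by \Cref{fact:sGdentabilityInheritance} $\Rightarrow$ $G\cdot E$ is classically dentable $\Rightarrow$ $E$ is dentable, the last step by transporting a denting point $y=g\cdot e_0$ of the orbit back into $E$ with $g^{-1}$ and using $\clco\big(E\setminus B_\eps(e_0)\big)\subseteq\clco\big((G\cdot E)\setminus B_\eps(e_0)\big)$; you then invoke the localized Lindenstrauss theorem as a black box. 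Each link checks out (in particular $\clco_G(E)\subseteq\clco_G(D)$ is bounded, convex and $G$-invariant, so the hypothesis applies, and a surjective linear isometry does commute with $\clco$ and carries $B_\eps(y)$ onto $B_\eps(e_0)$), and the appeal to \cite[Theorem 5, p.~190]{DU1977} is legitimate because that proof only ever uses dentability of closed bounded convex subsets of the given set. The paper, by contrast, does not extract classical dentability of arbitrary subsets of $D$ at all: it runs the Lindenstrauss construction directly on $D$, applying $G$-dentability to $G\cdot D_n$ to produce faces $D_{n+1}$ that merely admit finite $2^{-n-1}$-nets rather than having small diameter, so the intersection is only a nonempty \emph{compact} convex set (via Kuratowski's intersection theorem, \Cref{lma:intersectionOfAlmostCpcts}) to which the classical Krein--Milman theorem is then applied. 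Your reduction is shorter and more modular, and it proves a stronger intermediate fact; the paper's version is self-contained and never needs the single-ball form of dentability. One small caution about your parenthetical sketch of the classical proof: taking $D_{n+1}$ to be (the closure of) a \emph{slice} of $D_n$ obtained from Hahn--Banach separation is not enough to conclude that the point of $\bigcap_n D_n$ is extreme in $D$, since a midpoint lying in a slice only forces \emph{one} of the two endpoints into the slice; one must, as in the paper and in \cite{DU1977}, perturb the separating functional by Bishop--Phelps so that $D_{n+1}$ is the set where the supremum is \emph{attained}, i.e.\ a face of $D_n$, for the inductive extremality argument to go through. Since you ultimately cite the classical theorem rather than rely on the sketch, this does not affect the correctness of your proof.
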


\Cref{theorem:strong-G-dentable-implies-G-KMP} follows as a consequence of \Cref{proposition:existence-of-ext-pts}. Indeed, assume that
\Cref{proposition:existence-of-ext-pts} holds and let $X$ be a strongly
$G$-dentable Banach space. Suppose, towards a contradiction, that $X$ does not have the $G$-KMP. That is, there exists a bounded convex closed $G$-invariant set $D \subseteq X$ such that
\begin{equation*} \label{KM-1}
D \neq \overline{\co}_G(\ext(D))
\end{equation*}
where $\ext(D)$ denotes the subset of extreme points of $D$.

Pick any $x'_0 \in D \setminus \overline{\co}_G(\ext(D))$. By the Hahn-Banach separation theorem applied to $x'_0$ and $\overline{\co}_G(\ext(D))$ we obtain $(x')^*\in X^*$ separating $x'_0$ and $\overline{\co}_G(\ext(D))$. By applying the Bishop-Phelps theorem to $(x')^*$, there exist $x^* \in X^*$ and $x_0\in D\setminus \overline{\co}_G(\ext(D))$ such that
\[
x^*(x_0) = \sup x^*(D) > \sup x^*(\overline{\co}_G(\ext(D))).\]
Consider the subset $F:= \{x \in D: x^*(x) = x^*(x_0) \}$ which is a nonempty convex closed bounded subset of $D$. Since $X$ is assumed to be strongly $G$-dentable, in particular, every convex closed bounded $G$-invariant subset of $\clco_G (F)$ is $G$-dentable. Applying \Cref{proposition:existence-of-ext-pts}, we conclude that $F$ has an extreme point. Let $e$ be an extreme point of $F$. We claim that $e$ is an extreme point of $D$. Indeed, suppose that there are $u, v \in D$ and $\lambda \in [0,1]$ such that $e = \lambda u + (1 - \lambda) v$. Then we have that
\begin{equation*}
x^*(x_0) = x^*(e) = \lambda x^*(u) + (1-\lambda) x^*(v).
\end{equation*}
Since $x^*(x_0) = \sup x^*(D)$, we get that $x^*(u) = x^*(v) = x^*(x_0)$. This implies that $u,v \in F$ and, therefore, $u=v=e$. So, $e \in \ext (D)$. Consequently,
\[
x^* (e) = \sup x^* (D) > \sup x^*(\overline{\co}_G(\ext(D))) \geq x^* (e),
\]
which is a contradiction.

Therefore, in order to prove \Cref{theorem:strong-G-dentable-implies-G-KMP}, it remains to prove \Cref{proposition:existence-of-ext-pts}. For this we will use the following well-known result.

\begin{lemma} [\mbox{Kuratowski's intersection theorem, \cite[(f), p.~6]{Measuresofnoncpt-book}}] \label{lma:intersectionOfAlmostCpcts}
    Let $D_1 \supseteq D_2 \supseteq \cdots$ be a~decreasing sequence of closed subsets of a~complete metric space $M$. If there is a~sequence $(\eps_n)_{n=1}^\infty \subseteq (0, \infty)$ such that $\eps_n \to 0$ and, for every $n \in \N$, the set $D_n$ has a~finite $\eps_n$-net, then $\bigcap_{n \in \N} D_n$ is a~nonempty compact set.
\end{lemma}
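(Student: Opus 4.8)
The plan is to obtain nonemptiness by producing, through a diagonal argument, a Cauchy subsequence drawn from a transversal of the $(D_n)$, and then to deduce compactness from completeness together with total boundedness. (Observe that the statement tacitly assumes each $D_n$ is nonempty, since otherwise a tail of empty sets---each trivially admitting the empty $\eps_n$-net---would force an empty intersection; this is automatic in the intended application, where the sets arise as nonempty slices.) Concretely, I would fix $x_n \in D_n$ for each $n$. Since $x_m \in D_m \subseteq D_n$ whenever $m \ge n$, the tail $\{x_m : m \ge n\}$ lies in $D_n$ and is therefore covered by the finitely many balls of radius $\eps_n$ coming from its $\eps_n$-net. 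I then build nested infinite index sets $\N = T_0 \supseteq T_1 \supseteq T_2 \supseteq \cdots$, where $T_n$ is extracted from $T_{n-1}$ by the pigeonhole principle: among the indices $m \in T_{n-1}$ with $m \ge n$, infinitely many have $x_m$ in one common ball $B_n$ of radius $\eps_n$, and $T_n$ is taken to be that infinite set. By construction every element of $T_n$ is $\ge n$, and all $x_m$ with $m \in T_n$ lie in $B_n$.

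Next I choose a strictly increasing diagonal sequence $m_1 < m_2 < \cdots$ with $m_k \in T_k$, so that automatically $m_k \ge k$. For $i, j \ge k$ both $x_{m_i}$ and $x_{m_j}$ lie in $B_k$, whence $d(x_{m_i}, x_{m_j}) \le 2\eps_k \to 0$, so $(x_{m_k})_k$ is Cauchy; completeness of $M$ yields a limit $x \in M$. To verify $x \in \bigcap_n D_n$, fix $N$: for every $k \ge N$ we have $m_k \ge k \ge N$, hence $x_{m_k} \in D_{m_k} \subseteq D_N$, so the tail of the convergent subsequence lies in the closed set $D_N$ and therefore $x \in D_N$. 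As $N$ is arbitrary, $\bigcap_n D_n \neq \emptyset$.

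For compactness, write $D_\infty := \bigcap_n D_n$. It is closed in the complete space $M$, hence complete; and it is totally bounded, since for any $\delta > 0$ one may choose $n$ with $\eps_n < \delta$ (using $\eps_n \to 0$), and then the finite $\eps_n$-net of $D_n \supseteq D_\infty$ serves as a finite $\delta$-net for $D_\infty$. A complete, totally bounded metric space is compact, so $D_\infty$ is compact, which completes the proof.

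I expect the diagonal extraction to be the only genuinely delicate point: one must manage the nested index sets and the pigeonhole so that the resulting subsequence is simultaneously Cauchy (by being trapped in balls of radius $\eps_k \to 0$) and eventually contained in every $D_N$ (guaranteed by the index constraint $m_k \ge k$, which in turn comes from building $T_n$ only from indices $m \ge n$). The remaining ingredients---invoking completeness to produce the limit, and the completeness-plus-total-boundedness characterization of compactness---are routine.
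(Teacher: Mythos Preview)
Your argument is correct: the diagonal extraction yields a Cauchy subsequence whose limit lies in every $D_N$, and the compactness of the intersection follows from completeness plus total boundedness, exactly as you outline. Note, however, that the paper does not supply its own proof of this lemma; it is quoted as Kuratowski's intersection theorem with a citation to an external reference, so there is no in-paper argument to compare against---your proof simply fills in what the paper takes for granted.
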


\begin{proof}[Proof of \Cref{proposition:existence-of-ext-pts}] We will inductively construct a decreasing sequence $D_0 \supseteq D_1 \supseteq \cdots$ of nonempty closed convex subsets of $D$ such that $D_n$ has a finite $2^{-n}$-net for every $n\geq1$. Put $D_0 = D$.
    
    Assume that $D_n \subseteq D$ has already been constructed. Since $D_n$ is bounded, so is $\clco_G (D_n)$. Clearly, $\clco_G (D_n) \subseteq \clco_G (D)$. Thus, by the assumption, $\clco_G (D_n) = \clco (G\cdot D_n)$ is $G$-dentable. It follows from \Cref{fact:sGdentabilityInheritance} that $G \cdot D_n$ is also $G$-dentable. Choosing $\eps_n = 2^{-n-2}$, it follows that there exist $m_n \in\N$, $x^n_1, \dots, x^n_{m_n} \in G \cdot D_n$, and $x^n \in G\cdot D_n$ such that
    \begin{equation*}
        x^n \notin \clco_G \big( G \cdot D_n \setminus B_{\eps_n}(x^n_1, \cdots, x^n_{m_n}) \big) =: C_n.
    \end{equation*}
    If $C_n = \emptyset$, then $D_n \subseteq G \cdot D_n \subseteq B_{\eps_n}(x^n_1, \cdots, x^n_{m_n})$, which implies that $D_n$ has a~finite $2^{-n-1}$-net and thus we can set $D_{n+1} = D_n$.
    
    So, assume that $C_n \neq \emptyset$. Since $C_n$ is $G$-invariant, we may, without loss of generality, assume that $x^n \in D_n$. By the Hahn-Banach theorem, there exists $x^*_n \in X^*$ such that 
    \[
    x^*_n(x^n) > \sup x^*_n (C_n).
    \]
    Since $D_n$ is a bounded closed convex set, we may invoke the Bishop-Phelps theorem to find $y^*_n \in X^*$ such that $y^*_n$ attains its maximum on $D_n$ at, say, $y^n \in D_n$ and $y_n^*$ is sufficiently close to $x_n^*$ so that 
    \begin{equation*}
        y^*_n(y^n) = \sup y_n^* (D_n) > \sup y^*_n (C_n).
    \end{equation*}
    Define $M_n := y^*_n(y^n)$ and
    \begin{equation*}
        D_{n+1} := \left\{
            x \in D_n : y^*_n(x) = M_n
        \right\}.
    \end{equation*}
    By the definition of $M_n$, the sets $D_{n+1}$ and $C_n$ must be disjoint. However, $D_{n+1} \subseteq D_n \subseteq G \cdot D_n$ and hence, by the definition of $C_n$, we have that $D_{n+1} \subseteq B_{\eps_n}(x^n_1, \cdots, x^n_{m_n})$, implying that $D_{n+1}$ has a~finite $2^{-n-1}$-net. This implies in turn that $D_{n+1} \subseteq D_n$ is a nonempty closed convex set with a~finite $2^{-n-1}$-net.
    
    Define $K := \bigcap_{n\in\mathbb{N}} D_n$. Using \Cref{lma:intersectionOfAlmostCpcts}, we observe that $K$ is a~nonempty compact set. Moreover, since all $D_n$ are convex, so is $K$. Hence, by the classical Krein-Milman theorem, $K$ has an~extreme point, call it $z \in K$.
    
    We claim that $z$ is an~extreme point of $D$.
    To see this, let $z = \alpha z_1 + (1-\alpha)z_2$ with $\alpha \in [0,1], z_1, z_2 \in D$. We prove by induction that $z_1, z_2 \in D_n$ for all $n \geq 0$. The base case $z_1, z_2 \in D_0 = D$ is trivial. Assume that $z_1, z_2 \in D_n$ for some $n \geq 0$. Since $z \in K \subseteq D_{n+1}$,
    \begin{equation*}
        M_n = y^*_n(z) = \alpha y^*_n(z_1) + (1-\alpha)y^*_n(z_2).
    \end{equation*}
    As $M_n$ is the maximal value of $y^*_n$ on $D_n$, it follows that $y^*_n(z_1) = y^*_n(z_2) = M_n$, implying that $z_1, z_2 \in D_{n+1}$ and completing the induction step. Consequently, $z_1, z_2 \in K$; and since $z \in \mathrm{ext}(K)$, we obtain that $z_1 = z_2 = z$. This proves that $z$ is indeed an~extreme point of $D$.
\end{proof}

\subsection{Weak $G$-dentability implies the $G$-RNP} 
In this subsection, we establish a connection between weak $G$-dentability and the $G$-RNP of a $G$-Banach space. Notice from \Cref{prop:weakly-G-dentable-and-dentable-are-equivalent} and the discussion following \Cref{def:newG-RNP} that if a $G$-Banach space $X$ is weakly $G$-dentable, then $X$ is dentable, and hence has the $G$-RNP. 

Nevertheless, for separable groups $G$, we present a direct proof that weak $G$-dentability implies the $G$-RNP (see \Cref{theorem:weak-G-dentable-implies-G-RNP} below). We conclude the section by showing that there is no $G$-equivariant version of Lewis-Stegall's factorization theorem (see \Cref{example:Lewis-Stegall}).

\begin{theorem} \label{theorem:weak-G-dentable-implies-G-RNP}
    Let $G$ be a separable topological group and $X$ be a $G$-Banach space. If $X$ is weakly $G$-dentable, then $X$ has the $G$-RNP.
\end{theorem}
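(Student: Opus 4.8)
The plan is to verify the \emph{$G$-Riesz representation property}, which is equivalent to the $G$-RNP by \Cref{prop:G-Riesz-equivalent_to_G-RNP}. Fix a finite measure space $(\Omega,\Sigma,\mu)$ with a continuous action of $G$ on $L_1(\Omega,\Sigma,\mu)$ by positive linear isometries and a $G$-equivariant $T\colon L_1(\mu)\to X$ with $\|T\|\le 1$; put $m(A):=T(\chi_A)$, so $\|m(A)\|\le\mu(A)$. By the classical argument that a dentable Banach space has the RNP, it suffices to prove the following local small-oscillation property: for every $B\in\Sigma$ with $\mu(B)>0$ and every $\eps>0$ there is $B'\subseteq B$ with $\mu(B')>0$ and
\[
\diam\Big\{\tfrac{1}{\mu(A)}m(A)\colon A\in\Sigma,\ A\subseteq B',\ \mu(A)>0\Big\}<\eps ,
\]
since a standard exhaustion/martingale argument then produces a density $\phi\in L_\infty(\Omega,\Sigma,\mu;X)$ with $m(A)=\int_A\phi\,d\mu$.

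The central point is to replace the non-$G$-invariant local average ranges by a $G$-invariant enlargement. For $E\in\Sigma$ with $\mu(E)>0$ set
\[
\mathcal{K}_E:=\big\{T(\psi)\colon\psi\in L_1(\mu),\ \psi\ge 0,\ \|\psi\|_1=1,\ \supp\psi\subseteq E\big\}\subseteq B_X ,
\]
a bounded convex set that contains $\{\mu(A)^{-1}m(A)\colon A\subseteq E,\ \mu(A)>0\}$. Because $G$ acts on $L_1(\mu)$ by \emph{positive} isometries -- which preserve the positive cone and the $L_1$-norm, and (by Banach--Lamperti, the cocycle $dg_*\mu/d\mu$ being strictly positive) send functions supported in $E$ to functions supported in $gE$ -- the $G$-equivariance of $T$ yields $g\cdot\mathcal{K}_E=\mathcal{K}_{gE}$ for all $g\in G$. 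Hence $G\cdot\mathcal{K}_B=\bigcup_{g\in G}\mathcal{K}_{gB}$, and $C_B:=\clco_G(\mathcal{K}_B)$ is a nonempty bounded closed convex \emph{$G$-invariant} subset of $B_X$. By weak $G$-dentability, $C_B$ is dentable; being closed, bounded and convex, a Hahn--Banach argument converts a denting point into a slice $S=\{y\in C_B\colon x^*(y)>\theta-\alpha\}$, with $\theta:=\sup x^*(C_B)$ and $\diam S<\eps$ (we may assume $x^*\neq 0$, else $\diam C_B<\eps$ and $B'=B$ works).

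It remains to pull this slice back into $B$, and this is where separability of $G$ is used. Write $x^*\circ T=\langle\,\cdot\,,h\rangle_\mu$ with $h\in L_\infty(\mu)$. Using $g\cdot\mathcal{K}_B=\mathcal{K}_{gB}$ one computes $\theta=\sup_{g\in G}\operatorname{ess\,sup}_{gB}h$. Fix a countable dense set $\{g_n\}_n\subseteq G$; by continuity of the action on the measure algebra of $(\Omega,\Sigma,\mu)$, every $gB$ is contained modulo null sets in $\widehat B:=\bigcup_n g_nB$, so $\theta=\operatorname{ess\,sup}_{\widehat B}h$. Put $B'':=\{h>\theta-\alpha/2\}\cap\widehat B$, which has $\mu(B'')>0$. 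If $\psi\ge 0$, $\|\psi\|_1=1$ and $\supp\psi\subseteq B''\cap g_nB$, then $T(\psi)\in\mathcal{K}_{g_nB}\subseteq C_B$ and $x^*(T(\psi))=\int\psi h\,d\mu>\theta-\alpha/2$, hence $T(\psi)\in S$; in other words $\mathcal{K}_{B''\cap g_nB}\subseteq S$ for every $n$. Since $B''=\bigcup_n(B''\cap g_nB)$, some $E_0:=B''\cap g_{n_0}B$ has positive measure, so $\mathcal{K}_{E_0}\subseteq S$. Finally $B':=g_{n_0}^{-1}E_0\subseteq B$ has positive measure, and since $g_{n_0}$ acts isometrically with $g_{n_0}\cdot\mathcal{K}_{B'}=\mathcal{K}_{E_0}$, we get $\diam\{\mu(A)^{-1}m(A)\colon A\subseteq B'\}\le\diam\mathcal{K}_{B'}=\diam\mathcal{K}_{E_0}\le\diam S<\eps$ (running the argument with $\eps/2$ if one insists on strictness after the standard factor $2$ in the denting-point-to-slice passage).

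The main obstacle is precisely that the classical proof cannot be localized $G$-equivariantly: the local average ranges $\{\mu(A)^{-1}m(A)\colon A\subseteq B\}$ are not $G$-invariant, and the ``RNP-set'' refinement of the classical argument would require dentability of \emph{all} their subsets, which weak $G$-dentability does not supply. The resolution -- enlarging to the orbit-closed convex set $C_B$, which is $G$-invariant exactly because positive $L_1$-isometries absorb the Radon--Nikod\'ym cocycle $dg_*\mu/d\mu$, and then using the countable density of $G$ to transport the small slice out of the $G$-saturation $\widehat B$ and back into $B$ via one group element -- is the heart of the proof. I expect the only genuinely delicate routine points to be the measure-algebra facts (that $gB\subseteq\widehat B$ modulo null sets, and the resulting identity $\theta=\operatorname{ess\,sup}_{\widehat B}h$) and the bookkeeping in the classical exhaustion step that manufactures the density from the local small-oscillation property.
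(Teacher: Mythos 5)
Your proof is correct, and it takes a genuinely different route from the paper's, even though both arguments share the same skeleton (reduce to the $G$-Riesz representation property, establish a local small-oscillation property for the average ranges, then run the standard exhaustion) and the same two key ingredients: the identity $g\cdot\mathcal{K}_E=\mathcal{K}_{gE}$ for positive $L_1$-isometries and the separability of $G$. The paper proves the small-oscillation claim only for sets $A$ that are invariant under a countable dense subgroup $G'$, and does so \emph{by contradiction}: if every positive-measure subset of $A$ had large oscillation, an exhaustion inside $A$ would exhibit every point of $C_A$ as a convex combination of points at distance $>\eps/2$, so the $G$-invariant set $D_A$ would fail classical dentability. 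Because the claim is only available for $G'$-invariant sets, the paper then needs a second, separate countability argument to show that a maximal disjoint family of small-oscillation sets exhausts $\Omega$. You instead prove the small-oscillation property for an \emph{arbitrary} positive-measure $B$, and you use dentability positively: you dent the $G$-saturation $\clco_G(\mathcal{K}_B)$, convert the denting point into a slice via Hahn--Banach, represent the slicing functional composed with $T$ by an $L_\infty$ density $h$, use a countable dense subset of $G$ to saturate $B$ and identify $\sup x^*(C_B)$ with $\operatorname{ess\,sup}_{\widehat B}h$, locate a positive-measure piece of a single translate $g_{n_0}B$ inside the slice, and pull it back into $B$ by the isometry $g_{n_0}^{-1}$. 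This makes the subsequent covering step completely standard (no group-theoretic input needed there), at the price of invoking duality and an explicit slice; the paper's route avoids the dual entirely but pays with two separate countability arguments and a proof by contradiction. The routine points you flag (that $gB\subseteq\widehat B$ modulo null sets, which follows from continuity of $g\mapsto U_g\chi_B$ into $L_1$ together with the a.e.\ positivity of $dg_*\mu/d\mu$ on $gB$) do check out.
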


\begin{proof} By \Cref{prop:G-Riesz-equivalent_to_G-RNP}, it is enough to prove that $X$ has the $G$-Riesz representation property.
Let $(\Omega,\Sigma,\mu)$ be a finite measure space and $T: L_1 (\Omega,\Sigma,\mu) \to X$ be a $G$-equivariant bounded linear operator, where $G$ acts on $L_1 (\Omega,\Sigma,\mu)$ by positive linear isometries. By the Banach-Lamperti theorem, we can suppose that the action of $G$ on $L_1(\Omega,\Sigma,\mu)$ is induced by an action of $G$ on $\Sigma/N(\mu)$ (see \Cref{cor:BanachLamperti}).

Given $A \in \Sigma$ with $\mu(A)>0$, consider 
\begin{align*}
    &C_A := \left\{ T \left(\frac{\chi_B}{\mu (B)} \right) : B \subseteq A, \, \mu(B)>0 \right\}
\end{align*} 
and set
\begin{align*}
    D_A := \overline{\{ Tf : f \geq 0, \, \text{supp}(f)\subseteq A, \, f \in S_{L_1 (\Omega, \Sigma,\mu)} \}}.
\end{align*}
For simplicity, let us write $x_B := T(\chi_B)/ \mu(B)$. 

 Notice that, for every $A\in\Sigma$, we have
 \begin{enumerate}
     \item $C_A\subseteq  D_A$;
     \item in fact, by approximation of positive $L_1$-functions by positive simple functions, we have that $D_A=\overline{\mathrm{co}}(C_A)$;
     \item in particular, $\diam D_A=\diam C_A$; \label{item:diam} 
     \item while for $g\in G$ we do not in general have $gC_A=C_{gA}$, we do have $gD_A=D_{gA}$. \label{item:gDA} 
 \end{enumerate}

 Let now $G'\leq G$ be a countable dense subgroup, which exists since $G$ is separable.

 \noindent\textbf{Claim 1.} Let $A\in\Sigma$, with $\mu(A)>0$, be $G'$-invariant. Then, $D_A$ is $G$-invariant.

Indeed, by item \eqref{item:gDA} above, for every $g \in G'$, we have $gD_A = D_{gA} = D_A$ as $A$ is $G'$-invariant, so $D_A$ is $G'$-invariant. Now, let $g \in G$ and $x \in D_A$. Choose a net $(g_{\alpha})_{\alpha}$ in $G'$ converging to $g$. By continuity of the action and since $D_A$ is closed, $gx=\lim_\alpha g_{\alpha}x\in D_A$, showing that $D_A$ is $G$-invariant.

\noindent\textbf{Claim 2.} For any $G'$-invariant set $A \in \Sigma$ with $\mu(A)>0$ and any $\varepsilon>0$, there exists $B \subseteq A$ with $\mu(B)>0$ and $\diam (C_B) \leq \varepsilon$. 

Assume to the contrary that there exists a $G'$-invariant set $A \in \Sigma$ with $\mu(A) > 0$ and $\varepsilon>0$ such that, for every $B \subseteq A$ with $\mu(B)>0$, we have $\diam  (C_B) > \varepsilon$. We then show that the $G$-invariant set $D_A$ is not dentable, which contradicts the assumptions. Since $D_A=\overline{\mathrm{co}}(C_A)$, note that $D_A$ is dentable if and only if $C_A$ is dentable \cite{Phelps1974, Rieffel1967}. So it is enough to show that $C_A$ is not dentable. In fact, we will show that no point of $C_A$ is dented with radius $\e/2$, that is, for a fixed $B \subseteq A$ with $\mu(B)>0$, we shall show that $x_B \in \overline{\co}(C_A \setminus B_{\e/2}(x_B))$.

To show this, we use an exhaustion argument. Take a maximal disjoint family $\{E_n\}$ of measurable subsets of $B$ such that $\mu(E_n)>0$ for every $n \in \N$ and that for any $n \in \mathbb{N}$, $\| x_B - x_{E_n} \| > \varepsilon/2$. Put $E= B \setminus (\cup_n E_n)$. We claim that $\mu(E)=0$. If not, by the assumption, $\diam  (C_E)>\varepsilon$. Thus, there exists $y \in  C_E$ such that $\| x_B - y \| >\varepsilon/2$. Take $E'\subseteq E$ so that $\mu(E')>0$ and $y=x_{E'}$. The existence of $E'$ contradicts the maximality of $\{E_n\}$. Consequently, we obtain that $B = \cup_n E_n$ (up to null sets). Therefore,
\begin{equation*}
x_B = \sum_n \frac{\mu(E_n)}{\mu(B)} x_{E_n}.
\end{equation*} 
The above equality is understood as convergence in norm. Equivalently, it follows by applying $T$ to the $L_1$-convergent identity
\begin{equation*}
    \frac{\chi_B}{\mu(B)} = \sum_n \frac{\mu(E_n)}{\mu(B)} \frac{\chi_{E_n}}{\mu(E_n)}.
\end{equation*}
For every $n$, we have $x_{E_n} \in C_A \setminus B_{\e/2}(x_B)$ by construction. Then, we have $x_B \in \overline{\co}(C_A \setminus B_{\e/2}(x_B))$. Since this holds for every $B \subseteq A$ with $\mu(B) > 0$, $C_A$ is not dentable. This completes the proof of Claim 2.

Thanks to Claim 2, we can consider, for given $\varepsilon>0$, a maximal disjoint family $\{A_n\}$ of measurable subsets of $\Omega$ such that $\mu(A_n) >0$ and $\diam ( C_{A_n})\leq \varepsilon$ for every $n \in \mathbb{N}$.

\noindent \textbf{Claim 3.} The set $\Omega$ is equal to $\bigcup_n A_n$ up to measure zero. 

Otherwise, set $E:=\Omega\setminus \bigcup_n A_n$ and assume that $\mu(E)>0$. If there exists a $G'$-invariant subset $E'\subseteq E$ of positive measure, then by {Claim 2}, there is $E''\subseteq E'$ such that $\mu(E'')>0$ and $\diam C_{E''}\leq \varepsilon$, which contradicts the maximality of $\{A_n\}$. This implies that the set 
\[
E \, \setminus 
\bigcup_{\substack{n \in \mathbb{N}, \, g \in G'}} 
g\cdot A_n
\]
is $\mu$-null. Since $G'$ is countable, $\mu(E)>0$, and from the $\sigma$-additivity of $\mu$, there exist $n\in\N$ and $g\in G'$ such that $\mu(gA_n\cap E)>0$. Setting $F:=A_n\cap g^{-1}E$, 
\begin{equation*}
\diam C_{gA_n\cap E}=\diam D_{gA_n\cap E} \xlongequal{\text{\eqref{item:diam}, \eqref{item:gDA} }} \diam C_F \leq \diam C_{A_n}\leq \varepsilon.
\end{equation*}
This again contradicts the maximality of $\{A_n\}$ and completes the proof of Claim 3.

Finally, define $\phi_\varepsilon :\Omega \to X$ by $\phi_\varepsilon(\omega) = x_{A_n}$ if $\omega \in A_n$ for some $n\in\mathbb{N}$. Let $f \in S_{L_1 (\Omega,\Sigma,\mu)}$ be a positive function and let $\alpha_n = \int_{A_n} f \, d\mu$ for every $n \in \mathbb{N}$. In the following estimate, we omit the terms for which $\alpha_n = 0$. Note that, whenever $\alpha_n > 0$, we have
 \[
T\left( \frac{f \chi_{A_n} }{\alpha_n} \right) \in  D_{A_n} \quad \text{ and } \quad x_{A_n} \in C_{A_n}.
\]
Thus,
\begin{align*} 
\left\|Tf - \int f  \phi_\varepsilon \, d\mu \right\| &= \left\| \sum_{\alpha_n > 0} \left( T(f \chi_{A_n}) - \int_{A_n} f \phi_\varepsilon \, d\mu \right) \right\| \\ 
&\leq \sum_{\alpha_n > 0} \alpha_n \left\| T\left( \frac{f \chi_{A_n} }{\alpha_n} \right) - x_{A_n} \right\| \leq \varepsilon
\end{align*} 
where the last inequality holds since $\diam (D_{A_n})\leq \varepsilon$. Therefore, it follows that $\|Tf- \int f \phi_\varepsilon \, d\mu\| \leq \varepsilon \|f\|$ for all $f \in L_1 (\Omega,\Sigma,\mu)$.

We repeat this procedure with $\varepsilon_k = 1/2^k$, i.e., construct the maximal family $\{ A_n^k\}$ as above, and consider the corresponding $\widetilde{\phi}_k := \phi_{\varepsilon_k} \in L_\infty (\Omega, \Sigma,\mu; X)$ satisfying that $\|Tf - \int f \widetilde{\phi}_{k} \, d\mu \| \leq \|f\|/2^k$ for every $f \in L_1 (\Omega,\Sigma,\mu)$ and $k \in \mathbb{N}$. We claim that $\|\tilde{\phi}_n - \tilde{\phi}_m\|_{\infty} \leq 2^{-n} + 2^{-m}$. Indeed, fix $n,m \in \N$. For almost every $\omega \in \Omega$, there are indices $i,j$ such that $\omega \in A_i^n \cap A_j^m$ and $\mu(A_i^n \cap A_j^m) > 0$. Put $B:= A_i^n \cap A_j^m$. Since $B \subseteq A_i^n$ and $B \subseteq A_j^m$, we have $x_B \in C_{A_i^n} \cap C_{A_j^m}$. Therefore,
\begin{equation*}
\|x_{A_i^n} - x_{A_j^m}\| \leq \|x_{A_i^n} - x_B\| + \|x_B - x_{A_j^m}\| \leq \diam C_{A_i^n} + \diam C_{A_j^m} \leq 2^{-n} + 2^{-m}.
\end{equation*}
Since $\tilde{\phi_n}(\omega) = x_{A_i^n}$ and $\tilde{\phi}_m (\omega)=x_{A_j^m}$ for such $\omega$, this completes the claim. Thus, $(\tilde{\phi}_k)$ converges in $L_{\infty}(\Omega, \Sigma, \mu; X)$ to a function $\phi$ and $Tf = \int f \phi \, d \mu$ for every $f \in L_1(\Omega, \Sigma, \mu)$. This proves that $T$ is representable.
\end{proof}

Recall from Lewis-Stegall's result (see, for instance, \cite[Theorem 3, page 114]{DU1977}) that a bounded linear operator $T: L_1 (\Omega,\Sigma,\mu) \to X$ is representable if and only if $T$ factors through $\ell_1$, i.e., 
\[ \xymatrix{
		L_1 (\Omega,\Sigma,\mu) \ar[rd]_S \ar[rr]^T  & & X  \\
			& \ell_1 \ar[ur]_{L}  &
		} \]
This factorization result leads to a natural question whether every $G$-equivariant representable operator $T : L_1 (\Omega,\Sigma,\mu) \to X$ (see \Cref{def:G-representability}) factors through $\ell_1$ in a \emph{$G$-equivariant sense}, i.e., $T=L\circ S$, where $L$ and $S$ both are $G$-equivariant operators. However, the following example shows that this is not the case. 

 \begin{proposition} \label{example:Lewis-Stegall}
    There is no $G$-equivariant Lewis-Stegall's factorization theorem.
 \end{proposition}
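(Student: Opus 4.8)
The plan is to refute the naïve $G$-equivariant Lewis--Stegall statement by a single explicit example, built on the rigidity of the isometry group of $\ell_1$. The underlying observation is that if a \emph{connected} topological group acts continuously on $\ell_1(\Gamma)$ (for any set $\Gamma$) by linear isometries, then the action is trivial: a surjective linear isometry of $\ell_1(\Gamma)$ preserves the set of extreme points $\{\pm e_\gamma : \gamma\in\Gamma\}$ of its unit ball, which is uniformly discrete (any two distinct elements are at distance $2$), so for each fixed $\gamma$ the orbit map $g\mapsto g\cdot e_\gamma$ is a continuous map from a connected group into a discrete set, hence constant and equal to $e_\gamma$; since $\{e_\gamma\}$ spans a dense subspace, every $g$ acts as the identity. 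Consequently, whenever a connected group $G$ acts on $\ell_1(\Gamma)$ by linear isometries, any $G$-equivariant operator $L\colon \ell_1(\Gamma)\to X$ satisfies $g\cdot L(v)=L(g\cdot v)=L(v)$ for all $g,v$, i.e. $L(\ell_1(\Gamma))\subseteq X^{G}$, the subspace of $G$-fixed points.

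With this in hand I would take $G=\T$, set $\Omega=\T$ with normalized Haar measure $\lambda$, and let $\T$ act on $L_1(\T,\lambda)$ by the translations $(g\cdot f)(t)=f(t-g)$; these are positive linear isometries, they arise from the measure-preserving point action of $\T$ on itself (so the setup of \Cref{def:G-representability} applies via \Cref{cor:BanachLamperti}), and the action is strongly continuous. As target I would take $X=\R^2$ with the standard rotation action of $\T$, whose only fixed point is $0$, so $X^{\T}=\{0\}$. Define $T\colon L_1(\T,\lambda)\to\R^2$ by the (real) first Fourier coefficient, $T(f)=\left(\int_\T f(t)\cos(2\pi t)\,d\lambda(t),\ \int_\T f(t)\sin(2\pi t)\,d\lambda(t)\right)$. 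Using the addition formulas one checks that $T$ intertwines the translation action with the rotation action, so $T\in\mathcal{L}^{\T}(L_1(\T,\lambda),\R^2)$; moreover $T$ is representable with bounded density $\phi(t)=(\cos 2\pi t,\sin 2\pi t)$, cf. \eqref{eq:G-representable-operators}. Finally $T\neq 0$, since e.g. $T(\cos 2\pi\,\cdot)=(1/2,0)$.

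Now the proposition follows. Suppose, towards a contradiction, that $T=L\circ S$ with $S\colon L_1(\T,\lambda)\to\ell_1$ and $L\colon\ell_1\to\R^2$ both $\T$-equivariant, for some continuous action of $\T$ on $\ell_1$ by linear isometries (and likewise for $\ell_1(\Gamma)$, which is the target provided by the classical Lewis--Stegall theorem since $L_1(\T,\lambda)$ is separable). By the first paragraph the action of $\T$ on $\ell_1$ is trivial, hence $L(\ell_1)\subseteq X^{\T}=\{0\}$, so $T=L\circ S=0$, contradicting $T\neq 0$. Note that only the $\T$-equivariance of the second factor $L$ is used, so one cannot even arrange the weaker requirement that merely $L$ be equivariant. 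The one point that requires genuine care when writing this up is the rigidity statement --- verifying that surjective linear isometries of $\ell_1(\Gamma)$ permute $\{\pm e_\gamma\}$ and deducing triviality of every continuous $\T$-action --- together with the routine bookkeeping that the translation action fits the positive-isometric framework of \Cref{def:G-representability}; everything else (continuity, representability of $T$, and $T\neq 0$) is immediate.
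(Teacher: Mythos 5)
Your proof is correct and follows essentially the same strategy as the paper's: exhibit a connected group, a nonzero $G$-equivariant representable operator into $\mathbb{R}^2$ equipped with the rotation action, and use the fact that connected groups cannot act nontrivially on $\ell_1$ to force any equivariant factor $L$ to land in the fixed-point space $\{0\}$. The differences are cosmetic --- you take $G=\mathbb{T}$ acting by translation on $L_1(\mathbb{T})$ with $T$ the first Fourier coefficient where the paper takes $G=\Iso(\mathbb{R}^2)$ acting on $L_1(B_{\mathbb{R}^2})$ with $T$ the barycenter map, and you prove the rigidity of $\ell_1$ directly from the uniform discreteness of the extreme points $\{\pm e_\gamma\}$ rather than citing the Banach--Lamperti theorem for the total disconnectedness of $\Iso(\ell_1)$.
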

 
 \begin{proof}
Consider $G=\Iso (\mathbb{R}^2)$ and $(B_{\mathbb{R}^2}, \Sigma,\mu)$, where $\mu$ is the Lebesgue measure on $B_{\mathbb{R}^2}$. Note that $G$ is precisely the orthogonal group $O(2)$. Consider the subgroup $SO(2)$ of $O(2)$ consisting of the orientation-preserving linear isometries of $\R^2$. Define 
\[
T:L_1 (B_{\mathbb{R}^2}, \Sigma,\mu) \to \mathbb{R}^2, \quad T(\chi_A) = \int_A x \, d\mu(x) \text{ for every $A \in \Sigma$.}
\]
 
 Notice that $T$ is $G$-equivariant. {Moreover, $T$ is represented by the Bochner measurable function $x\mapsto x$ defined on $\R^2$}. 
 
 Suppose, for contradiction, that $T$ factors through $G$-equivariant operators 
$L:\ell_1 \to \mathbb{R}^2$ and $S:L_1 (B_{\mathbb{R}^2}, \Sigma,\mu) \to \ell_1$. 
Note that $\Iso(\ell_1)$ is the group of signed permutations, that is, 
\[
\Iso (\ell_1 ) = \{ (x_n) \mapsto (\varepsilon_n x_{\sigma(n)}) : (\varepsilon_n) \in \{-1,1\}^{\mathbb{N}}, \, \sigma : \mathbb{N}\to\mathbb{N} \text{ bijection} \}.
\]
In particular, $\Iso (\ell_1)$ is totally disconnected. Hence the restriction to $SO(2)$ of every continuous homomorphism
\[
G \to \Iso (\ell_1)
\]
is trivial, since $SO(2)$ is the connected component of the identity. Therefore, the restriction of any continuous action of $G$ on $\ell_1$ to $SO(2)$ is trivial.

Since every $r \in SO(2)$ acts as $\id_{\ell_1}$ on $\ell_1$ and since $L$ is $G$-equivariant, for every $x \in \ell_1$ and every $r \in SO(2)$, we have that 
\[
r \cdot L(x) = L(r x) = L(x).
\]
However, the only vector of $\R^2$ fixed by $SO(2)$ is $0$. Thus, $L(x) = 0$ for every $x \in \ell_1$ and so $L=0$. This implies that $T = L \circ S = 0$, which is impossible.
\end{proof}

It is worth mentioning that the space in \Cref{example:Lewis-Stegall} has the $G$-RNP since $\mathbb{R}^2$ has the RNP and the RNP implies the $G$-RNP (see \Cref{def:newG-RNP} above).

 \section{Consequences of the $G$-Radon-Nikod\'ym property}\label{section:G-RNP}

The main goal of this section is to prove that the $G$-Radon–Nikodým property implies weak $G$-dentability for groups $G$ satisfying a certain measure condition, which holds, for instance, for locally compact and $\sigma$-compact groups. Along the way, we develop the basic theory of $G$-equivariant-on-sets martingales. We begin with the definition of the class of groups considered in this section.

\begin{definition} \label{def:propertyF}
Let $G$ be a topological group. We say that $G$ has \emph{property $\FF$} if there exists a finite measure space $(\Omega,\Sigma,\mu)$ such that $G$ admits a point action on $\Omega$ (recall \Cref{rem:RNPdef}) satisfying the following conditions:
\begin{itemize}
    \item the corresponding action on $(\Omega,\Sigma,\mu)$ by regular set isomorphisms is continuous;
    \item the action on $\Omega$ is \emph{essentially free}, i.e., for every $g\in G\setminus\{1_G\}$ \[\mu\big(\{\omega\in\Omega\colon g\cdot \omega=\omega\}\big)=0,\]
    \item it admits a \emph{measurable fundamental domain}, i.e., there is $F\in\Sigma$ such that $\bigcup_{g\in G} gF=\Omega$ up to measure zero and for every $g\neq h\in G$, $gF\cap hF=\emptyset$.
    \item there exist a conull $G$-invariant set $\Omega_0 \subseteq \Omega$ and a measurable map $\sigma:\Omega_0 \rightarrow G$ such that, for every $\omega \in \Omega_0$, one has $\omega \in \sigma(\omega)F$, where $F$ is the measurable fundamental domain from the previous item.
\end{itemize}
\end{definition}

\begin{example}\label{ex:propertyF} 
Every locally compact $\sigma$-compact group $G$ has property $\FF$. Indeed, let $\mu_G$ be the Haar measure on $G$ (we refer to \cite{HofMorbook} for Haar measures on locally compact groups). Set $\Omega=G$ and let $\Sigma$ be the Borel $\sigma$-algebra on $G$. If $G$ is compact, then set $\mu=\mu_G$ and we are done. Otherwise, $\mu_G$ is not finite; however, by $\sigma$-compactness, it is $\sigma$-finite. Let $(K_n)_{n\in\N}$ be a sequence of pairwise disjoint measurable sets of strictly positive finite measure such that $G=\bigcup_{n\in\N} K_n$ and let $(\alpha_n)_{n\in\N}\subseteq \R$ be a sequence of strictly positive reals such that $\sum_{i=1}^\infty \alpha_i=1$. Then, define \[
\widetilde{\mu_G} (A):=\sum_{i=1}^\infty \alpha_i  \frac{\mu_G(A\cap K_i)}{\mu_G(K_i)},\quad A\in\Sigma.
\] It is straightforward to check that $(\Omega,\Sigma,\widetilde{\mu_G})$ is then a probability measure space witnessing that $G$ has property $\FF$ (the fundamental domain being the singleton $\{1_G\}$). The measurable orbit-coordinate map required in Definition \ref{def:propertyF} is simply $\sigma(g) = g$ for every $g \in G$.
\end{example}

In order to show that the $G$-RNP implies weak $G$-dentability for a certain group $G$, we shall need an auxiliary notion of a $G$-equivariant-on-sets martingale. While it is of independent interest, in the present context it will only play the role of an intermediary between the $G$-RNP and weak $G$-dentability.

\begin{definition} Let $G$ be a topological group and $X$ be a $G$-Banach space.
Let $(\Omega,\Sigma,\mu)$ be a finite measure space and suppose $G$ acts continuously on $(\Omega,\Sigma,\mu)$ by regular set isomorphisms. A sequence $(M_n)_{n\geq 0}\subseteq L_1(\Omega,\Sigma,\mu;X)$ is a \emph{$G$-equivariant-on-sets martingale} if each $M_n$, $n\geq 0$, individually is $G$-equivariant-on-sets (recall \Cref{def:setisoGequivariance}) and $(M_n)_{n\geq 0}$ is a martingale with respect to a filtration of $G$-invariant sub-$\sigma$-algebras of $\Sigma$. That is, there is a sequence  $\A_0\subseteq\A_1\subseteq\ldots\subseteq\Sigma$ such that $\A_n$ is $G$-invariant, each $M_n$ is $\A_n$-measurable, and $\mathbb{E}^{\A_n}(M_{n+1})=M_n$, where $\mathbb{E}^{\A_n}$ is the corresponding conditional expectation. Here, $G$-invariance of a sub-$\sigma$-algebra $\mathcal{A}$ of $\Sigma$ is understood in the sense that $g \cdot A \in \mathcal{A}$ for all $g \in G$ and $A \in \mathcal{A}$. 
\end{definition}

We refer to \cite[Section 1.3]{Pisier2016} for details on the classical definition of Banach-space-valued martingales.

\begin{theorem}\label{thm:G-RNPimpliesG-dentability}
Let $G$ be a topological group and $X$ be a $G$-Banach space. Consider the following conditions.
	\begin{enumerate}
		\item $X$ has the $G$-RNP.
		\item For every finite measure space $(\Omega, \Sigma, \mu)$ equipped with a~continuous action of $G$ by regular set isomorphisms, every $X$-valued $G$-equivariant-on-sets martingale which is uniformly bounded in $L_\infty$ (i.e. $\sup_{n\in\N} \norm{M_n}_{L_\infty} < \infty$) converges a.e. and in $L_1(\Omega,\Sigma,\mu; X)$.
		\item $X$ is weakly $G$-dentable.
	\end{enumerate} 
	Then, we have that $(1) \implies (2)$ and $(3) \implies (1)$ for any $G$, and  $(2) \implies (3)$ if $G$ has property $\FF$.
 In particular, if $G$ is locally compact and $\sigma$-compact, then all the conditions are equivalent.
\end{theorem}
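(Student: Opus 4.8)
The plan is to establish the three implications separately, since each requires a different tool, and then observe that by Example~\ref{ex:propertyF} a locally compact, $\sigma$-compact group has property $\FF$, which combined with separability closes the cycle $(1)\Rightarrow(2)\Rightarrow(3)\Rightarrow(1)$.

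\textbf{Step 1: $(1)\Rightarrow(2)$, valid for every $G$.} Given a finite measure space $(\Omega,\Sigma,\mu)$ with a continuous action of $G$ by regular set isomorphisms and an $X$-valued $G$-equivariant-on-sets martingale $(M_n)_{n\ge 0}$ adapted to a filtration $(\A_n)$ of $G$-invariant sub-$\sigma$-algebras with $\sup_n\|M_n\|_{L_\infty}=:c<\infty$, I would associate to it a vector measure. Let $\A_\infty:=\sigma(\bigcup_n\A_n)$, which is again $G$-invariant. Define $m:\A_\infty\to X$ on $A\in\A_n$ by $m(A):=\int_A M_n\,d\mu$; the martingale property makes this well-defined and countably additive, with $\|m(A)\|\le c\,\mu(A)$, so $|m|\ll\mu\restricted_{\A_\infty}$. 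The key point is to check $G$-quasi-invariance of $m$ with respect to the (restricted) action of $G$ on $(\Omega,\A_\infty,\mu)$: this follows directly from the definition of $G$-equivariant-on-sets applied to $M_n$ (Definition~\ref{def:setisoGequivariance}), since for $A\in\A_n$ and $g\in G$ one has $g\cdot m(A)=\int_A g\cdot M_n\,d\mu=\int_{gA}M_n\,\frac{dg_*\mu}{d\mu}\,d\mu=T_m(\frac{dg_*\mu}{d\mu}\chi_{gA})$, where $T_m$ is built from $m$ as in~\eqref{eq:def_Tm} and one uses that $M_n$ represents $T_m$ on $\A_n$-simple functions. By the $G$-RNP applied to $(\Omega,\A_\infty,\mu)$, there is $\phi\in L_\infty(\Omega,\A_\infty,\mu;X)$ with $m(A)=\int_A\phi\,d\mu$ for all $A\in\A_\infty$. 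Then $M_n=\mathbb E^{\A_n}(\phi)$ for every $n$, and the martingale convergence theorem for uniformly bounded Banach-valued martingales of the form $(\mathbb E^{\A_n}\phi)_n$ (see \cite[Section 1.3]{Pisier2016}) gives convergence a.e.\ and in $L_1$ to $\mathbb E^{\A_\infty}(\phi)=\phi$.

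\textbf{Step 2: $(2)\Rightarrow(3)$, assuming $G$ has property $\FF$.} Fix a finite measure space $(\Omega,\Sigma,\mu)$, an essentially free point action of $G$ with a measurable fundamental domain $F\in\Sigma$, as provided by property $\FF$. Suppose, for contradiction, that $X$ is not weakly $G$-dentable: there is a bounded $G$-invariant set $C\subseteq X$ and $\e>0$ so that $x\in\clco(C\setminus B_\e(x))$ for every $x\in C$. The idea is to encode a non-dentable set into a bounded, $L_\infty$-bounded, $G$-equivariant-on-sets martingale that fails to converge, contradicting $(2)$. Concretely, using the fundamental domain one builds a dyadic (or finitely-branching) martingale $(M_n)$ on $F$ valued in $C$ whose increments have norm $\ge\e/2$ on a set of definite measure --- this is the classical "bush/tree" construction turning non-dentability into a divergent bounded martingale (cf.\ the proof that non-RNP implies existence of a bounded divergent martingale, \cite[Chapter VII]{DU1977}, \cite{Pisier2016}) --- and then one \emph{spreads} it $G$-equivariantly over all of $\Omega=\bigcup_{g}gF$ by declaring $M_n(g\omega):=g\cdot M_n(\omega)$ for $\omega\in F$, $g\in G$. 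The filtration is obtained by taking the $G$-saturation of the dyadic algebra on $F$; essential freeness guarantees this is consistent up to null sets, and the measurable-fundamental-domain condition guarantees measurability of the spread function. One checks that the resulting $(M_n)$ is $G$-equivariant-on-sets (essentially by construction, via the change-of-variables identity defining that notion, using the Radon--Nikodým derivative $\frac{dg_*\mu}{d\mu}$ to account for the non-measure-preserving case), adapted to a $G$-invariant filtration, and uniformly bounded in $L_\infty$ by $\sup_{c\in C}\|c\|$, yet divergent on $F$ hence on a positive-measure set. This contradicts $(2)$.

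\textbf{Step 3: $(3)\Rightarrow(1)$, assuming $G$ separable.} This is exactly the content of Theorem~\ref{theorem:weak-G-dentable-implies-G-RNP}, so nothing new is needed here: if $X$ is weakly $G$-dentable and $G$ is separable, then $X$ has the $G$-RNP. Finally, for the "in particular" clause: a locally compact, $\sigma$-compact group has property $\FF$ by Example~\ref{ex:propertyF}, so if in addition it is separable all three implications apply and $(1)\Leftrightarrow(2)\Leftrightarrow(3)$.

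\textbf{Main obstacle.} The delicate part is Step~2: correctly setting up the $G$-equivariant martingale on $\Omega$ and verifying the \emph{$G$-equivariant-on-sets} condition of Definition~\ref{def:setisoGequivariance} in the presence of a merely quasi-invariant (not invariant) measure, where the Radon--Nikodým cocycle $\frac{dg_*\mu}{d\mu}$ enters. One must be careful that the fundamental-domain decomposition is only valid up to null sets and that essential freeness is used precisely to make the spread function well-defined a.e.; the $L_\infty$-boundedness and the quantitative lower bound on the martingale increments must survive this spreading, which they do because $G$ acts by isometries. The classical non-dentable-implies-divergent-martingale construction on $F$ is standard, so the genuinely new work is the equivariant bookkeeping.
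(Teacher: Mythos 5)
Your Steps 1 and 3 are essentially the paper's argument: Step 3 is just the citation of \Cref{theorem:weak-G-dentable-implies-G-RNP}, and Step 1 follows the same route (associate a vector measure to the martingale, verify $G$-quasi-invariance from the equivariance-on-sets of the $M_n$, apply the $G$-RNP, conclude by closed-martingale convergence); the paper works on all of $\Sigma$ via $m(A)=\lim_n\int_A M_n\,d\mu$ rather than on $\A_\infty$, which sidesteps the small wrinkle that the Radon--Nikod\'ym cocycle $\frac{dg_*\mu}{d\mu}$ in \Cref{def:setisoGequivariance} is computed in $\Sigma$ while your restricted measure space would want the $\A_\infty$-conditioned version, but that is repairable.

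The genuine gap is in Step 2, and it is not the bookkeeping you flag as the main obstacle. You propose to run the classical bush construction \emph{on the fundamental domain $F$} and then spread it over $\Omega=\bigcup_g gF$. But property $\FF$ gives no control whatsoever on $(F,\mu\restricted_F)$: the branching construction requires, at every stage, splitting the current cells into measurable pieces of prescribed relative measures $t_1,\dots,t_n$, which is impossible if $\mu\restricted_F$ is atomic --- and in the paper's own witnessing example for locally compact $\sigma$-compact groups (\Cref{ex:propertyF}, $\Omega=G$ acting on itself) the fundamental domain is the singleton $\{1_G\}$, which is even $\mu$-null for non-discrete $G$. In that case there is no room on $F$ to build any nontrivial martingale, and ``spreading'' a function defined only up to null sets on a null set determines nothing. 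The paper's fix is to enlarge the measure space: it replaces $(\Omega',\Sigma',\mu')$ by the product $\Omega'\times[0,1]$ with $\mu'\times\lambda$, lets $G$ act on the first coordinate only, takes $F=F'\times[0,1]$ as the new fundamental domain, and performs all the dyadic refinements in the $[0,1]$-direction (choosing $A^i_j\subseteq A_i\subseteq[0,1]$ with $\lambda(A^i_j)=t^i_j\lambda(A_i)$), so that $M_n$ is constant on each slice $gF'\times A^i_j$ and takes the value $g\cdot x^i_j$ there. This gives measurability, the $G$-invariant filtration $\A_n=\sigma(\{B\times A^i_j\colon B\in\Sigma'\})$, and the lower bound $\|M_n-M_{n+1}\|>\delta$ a.e.\ for free, independently of whether $F'$ is null or atomic. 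Your plan as written would fail exactly in the case needed for the ``in particular'' statement; it needs this product construction (or an equivalent enlargement of the measure space) to go through.
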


\begin{proof}
$(3)\implies (1)$ has already been established in the discussion preceding \Cref{theorem:weak-G-dentable-implies-G-RNP}.

$(1) \implies (2)$: The proof is the same as the proof of $(i)\implies (ii)$ in~\cite[Theorem~2.9]{Pisier2016}, to which we refer the reader. Here we only show how the weaker hypothesis of $(1)$ is sufficient to get the weaker conclusion $(2)$.

Let $(M_n)$ be an~$X$-valued uniformly bounded $G$-equivariant-on-sets martingale. Since $(M_n)$ is uniformly bounded in $L_\infty(\Omega,\Sigma,\mu; X)$, it is clear that $(M_n)$ is uniformly integrable in $L_1(\Omega,\Sigma,\mu; X)$. Thus, the proof of $(i)\implies (ii)$ from~\cite[Theorem~2.9]{Pisier2016} shows that the formula
	\begin{equation*}
		m(A) = \lim_{n \to \infty} \int_A M_n d\mu \quad (A \in \Sigma)
	\end{equation*}
defines a countably additive measure of finite total variation which is absolutely continuous with respect to $\mu$.

In order to be able to apply the $G$-RNP instead of the RNP in the proof of $(i)\implies (ii)$ from \cite[Theorem 2.9]{Pisier2016}, we only need to check that $m$ is $G$-quasi-invariant and then the remainder of the proof is the same as in \cite{Pisier2016}. Since we have the quite strong assumption of $(M_n)$ being bounded in $L_\infty(\Omega,\Sigma,\mu; X)$, we shall show that $\abs{m}$ is dominated by $\mu$ and then, by \Cref{rem:G-quasi-invariance}\ref{rem:QuasiInvVecMeasureInNiceCase}, we will only need to verify \ref{common-condition-quasi-invariant_second_condition} in \Cref{def:quasi-invariant-vector-measure}.

To check that $\abs{m}$ is dominated by $\mu$, notice that for every finite measurable partition $\{A_1, \dots, A_N\}$ of a set $A \subset \Omega$, we have
\begin{align*}
    \sum_{i=1}^N \norm{m(A_i)}_X
    &= \lim_{n \to \infty} \sum_{i=1}^N \norm{\int_{A_i}M_n d\mu}_X
    \leq \limsup_{n \to \infty} \sum_{i=1}^N \int_{A_i}\norm{M_n} d\mu\\
    &\leq \left( \sup_{n \in \N} \norm{M_n}_{L_\infty(\mu; X)} \right) \mu(A).
\end{align*}
Taking the supremum over all such partitions, we obtain 
\[
\abs{m} \leq \left( \sup_{n \in \N} \norm{M_n}_{L_\infty(\mu; X)} \right)\mu.
\]

Since $m$ has finite total variation, the operator $T_m : L_1 (\Omega,\Sigma,|m|) \to X$ is well-defined. Since $L_1 (\Omega,\Sigma,\mu) \subseteq L_1 (\Omega,\Sigma,|m|)$ (see \Cref{rem:G-quasi-invariance}\ref{rem:QuasiInvVecMeasureInNiceCase}), the restriction $T_m \restriction_{L_1 (\Omega,\Sigma,\mu)}$ is well-defined. To show $G$-quasi-invariance of $m$ we only need to check that, for every $g \in G$ and every $A \in \Sigma$, we have
\begin{equation*}
    g \cdot m(A) = T_m \left( \frac{dg_{*} \mu}{d\mu} \chi_{gA} \right).
\end{equation*}

To do so, fix $g \in G$ and $A \in \Sigma$. Observe that 
	\begin{align*}
		g \cdot m(A)
		&= \lim_{n \to \infty} \int_A g \cdot M_n (\omega) \, d\mu (\omega)
		= \lim_{n \to \infty} \int_{gA} M_n (\omega) \frac{d g_*\mu}{d\mu} (\omega) \, d\mu(\omega)
	\end{align*}
    since $M_n$ is $G$-equivariant-on-sets. 
	For ease of notation, denote
	\begin{equation*}
		h = \frac{d g_*\mu}{d\mu}.
	\end{equation*}

	Note that $h$ is nonnegative; thus 
	\begin{equation*}
		\norm{h}_1 = 
     \int_\Omega \frac{d g_* \mu}{d\mu} \ d\mu = \mu(g^{-1}\Omega) = \mu(\Omega) < \infty,
	\end{equation*}
	i.e., $h \in L_1 (\Omega,\Sigma,\mu)$. Since simple functions are dense in $L_1 (\Omega,\Sigma,\mu)$, we can find a monotone sequence $(h_k)_{k\in\mathbb{N}}$ of nonnegative simple functions such that $h_k \to h$ in $L_1 (\Omega,\Sigma,\mu)$ and $h_k \nearrow h$ a.e. By definition, there exist a sequence $(n_k)_{k\in\N}$, a sequence of families $(D^k_i)_{1\leq i\leq n_k}$ of disjoint subsets of $\Omega$, and families of nonnegative scalars $(\alpha^k_i)_{1\leq i \leq n_k}$ such that 
    \[
    h_k = \sum_{i=1}^{n_k} \alpha^k_i \chi_{D^k_i}.
    \]
     For each $k \in \mathbb{N}$, denote by 
	\begin{equation}\label{eqn:def_of_Ik}
		I_k = \lim_{n \to \infty} \int_{gA} h_k M_n d\mu.
	\end{equation}
	First, we need to show that the limit in \eqref{eqn:def_of_Ik} exists. In fact, we have that 
	\begin{equation}
		\label{eqn:GRNPtoMartingale:IkLinearity}
		\begin{split}
			\int_{gA} h_k M_n d\mu = \int_{gA} \sum_{i=1}^{n_k} \alpha^k_i \chi_{D^k_i} M_n d\mu &= \sum_{i=1}^{n_k} \alpha^k_i \int_{{gA} \cap D^k_i} M_n d\mu \\ 
			&\xrightarrow[n\to\infty]{\text{}} \sum_{i=1}^{n_k} \alpha^k_i m(gA \cap D^k_i) \in X
		\end{split}
	\end{equation}
        by definition of the vector measure $m$.
    
	Our goal now is to show that $\lim_{k \to \infty} I_k = g \cdot m(A)$. We start by choosing $\eps > 0$ arbitrarily. Let $K > 0$ be such that for all $n \in \N$ and almost every $\omega \in \Omega$, we have $\norm{M_n(\omega)} < K$. Next, choose $k_0 \in \N$ such that $\norm{h - h_k} < \eps/K$ for all $k\geq k_0$. Then, for all such $k$, we have
	\begin{align*}
		\norm{\int_{gA} h M_n d\mu - \int_{gA} h_k M_n d\mu}
		&\leq \int_{gA} \norm{(h - h_k) M_n} d\mu \\
		&\leq \int_{gA} K\abs{h - h_k} d\mu
		< \eps.
	\end{align*}
	Letting $n \to \infty$, we obtain that $\norm{I_k - g \cdot m(A)} \leq \eps$. Since $\eps > 0$ was arbitrary, it follows that $I_k \to g\cdot m(A)$.
	
	We now analyze the quantities $I_k$. Note from \eqref{eqn:GRNPtoMartingale:IkLinearity} that 
	\begin{equation*}
		I_k
		= \sum_{i=1}^{n_k} \alpha^k_i m(gA \cap D^k_i)
		= \sum_{i=1}^{n_k} \alpha^k_i T_m(\chi_{gA \cap D^k_i})
		= T_m(h_k \chi_{gA}).
	\end{equation*}
	Since we assume that $h_k$ converge a.e. to $h$ from below, the dominated convergence theorem yields that $h_k\chi_{gA} \to h\chi_{gA}$ in $L_1 (\Omega,\Sigma,\mu)$. Since $T_m$ is a~bounded operator on $L_1 (\Omega,\Sigma,\mu)$, it follows that $T_m(h_k\chi_{gA}) \to T_m(h \chi_{gA})$.
	
	Now we have all the pieces ready; it only remains to put them together to finish the proof of the $G$-quasi-invariance of $m$. Indeed, 
	\begin{align*}
		g\cdot m(A)
		= \lim_{k \to \infty} I_k
		= \lim_{k \to \infty} T_m(h_k \chi_{gA})
		= T_m (h \chi_{gA})
		= T_m \left( \frac{d g_*\mu}{d\mu} \chi_{gA} \right).
	\end{align*}
Thus, $m$ is a dominated $G$-quasi-invariant vector measure. Since $X$ has the $G$-RNP, there exists $\varphi \in L_{\infty}(\Omega, \Sigma, \mu; X)$ such that
\begin{equation*}
    m(A) = \int_A \varphi d\mu
\end{equation*}
for every $A \in \Sigma$. This is precisely the vector-measure representation needed in the classical proof of \cite[Theorem~2.9]{Pisier2016}. Therefore, the rest of the argument gives that the martingale $(M_n)$ converges almost everywhere and in $L_1(\Omega, \Sigma, \mu; X)$ to $\varphi$. This proves (1) $\implies$ (2).

$(2)\implies (3)$: Suppose that $G$ has property $\FF$ which is witnessed by an appropriate action on a finite measure space $(\Omega',\Sigma',\mu')$. Let $F'\subseteq \Omega'$ be the corresponding measurable fundamental domain. Let $\Omega_0' \subseteq \Omega'$ be the conull $G$-invariant set and let $\sigma': \Omega_0' \rightarrow G$ be the measurable orbit-coordinate map provided by Definition \ref{def:propertyF} so that $\omega' \in \sigma'(\omega') F'$ for every $\omega' \in \Omega_0'$. We define the finite measure space $(\Omega,\Sigma,\mu)$ as follows: 
\begin{itemize}
\item $\Omega := \Omega' \times [0,1]$;
\item $\Sigma$ is the product $\sigma$-algebra on $\Omega$ generated by $\Sigma'$ on $\Omega'$ and the Borel $\sigma$-algebra on $[0,1]$;
\item $\mu := \mu' \times \lambda$, where $\lambda$ denotes the Lebesgue measure on $[0,1]$.
\end{itemize}
 Moreover, we define a point action $\alpha_G$ of $G$ on $\Omega$  by
 \[
 \alpha_G (g, (\omega', t)) := \big(\alpha_{G}' (g,\omega'), t\big), \quad g\in G,\,  (\omega',t)\in \Omega' \times [0,1],
 \]
 where $\alpha'_{G} : G \curvearrowright \Omega'$ is the group action witnessing property $\FF$ of $G$. In other words, $G$ acts on the first coordinate as the original action and trivially on the second.  Clearly, the corresponding action on $(\Omega,\Sigma,\mu)$ by regular set isomorphisms is continuous. We shall use the conull $G$-invariant set $\Omega_0 = \Omega_0' \times [0,1]$. As before, we will omit the notation $\alpha_G$ or $\alpha_{G}'$ whenever the context makes the meaning clear.

To reach a contradiction, assume that $X$ is not weakly $G$-dentable. Then, there is a~$G$-invariant bounded set $D\subseteq X$ and $\delta > 0$ such that for every $x \in D$, we have $x \in \overline{\co}(D \setminus B_\delta(x))$. By a standard argument (see e.g. \cite[Lemma 2.7]{Pisier2016}), we may in fact assume that, for all $x \in D$, we have $x \in \co(D \setminus B_\delta(x))$.

Fix an arbitrary $x_0 \in D$ and define $M_0: \Omega \to D$ by 
\begin{equation*}
    M_0(\omega',t):= \sigma'(\omega') \cdot x_0
\end{equation*}
for $(\omega', t) \in \Omega_0$ and extend it arbitrarily on the null set $\Omega \setminus \Omega_0$.

Clearly, $M_0$ is well-defined on a conull set. Moreover, it is $G$-equivariant and measurable with respect to the sub-$\sigma$-algebra $\A_0:=\sigma\big(\{B\times [0,1]\colon B\in\Sigma'\}\big)$.

Since $x_0 \in \co(D \setminus B_\delta(x_0))$, there exist $n \in \N$, $t_1, \dots, t_n \in \left(0,1\right]$, and $x_1, \dots, x_n \in D$ such that
\[
    x_0 = \sum_{i=1}^n t_i x_i,
    \quad
    \sum_{i=1}^n t_i = 1,
    \quad \text{and} \quad
    \norm{x_i - x_0} \geq \delta, \,\, \forall i=1,\ldots,n.
\]
	We can then find disjoint measurable subsets $A_1, \dots, A_n \subseteq [0,1]$ such that $\lambda(A_i) = t_i$ and $\bigcup_{i=1}^n A_i = [0,1]$ and define 
\begin{equation*}
M_1(\omega',t):=\sigma'(\omega') \cdot x_i \quad \text{if $(\omega',t) \in \Omega_0$ and $t \in A_i$.}
\end{equation*}

    It is straightforward that $M_1$ is well-defined up to a null set and also $G$-equivariant. Moreover, it is measurable with respect to the sub-$\sigma$-algebra $\A_0\subseteq\A_1$, where $\A_1:=\sigma\big(\{B\times A_i\colon B\in\Sigma',1 \leq i\leq n\}\big)$.

    Furthermore, observe that, for a.e. $(\omega',t)\in\Omega$, we have 
    \[
    \|M_0(\omega',t)-M_1(\omega',t)\| \geq \delta.\] 
Indeed, let $\omega = (\omega',t) \in \Omega_0$. Then, there exists $i \in \{1,\ldots,n\}$ such that $t \in A_i$, so 
    \[
    \|M_0(\omega)-M_1(\omega)\|=\|\sigma'(\omega')\cdot x_0-\sigma'(\omega') \cdot x_i\|=\|x_0-x_i\| \geq \delta.
    \] 
    In the next step, for each $1\leq i\leq n$, since $x_i\in\co(D\setminus B_\delta(x_i))$, there exist $n_i\in\N$, $t^i_1,\ldots,t^i_{n_i}\in \left(0,1\right]$ and $x^i_1,\ldots,x^i_{n_i}\in D$ such that \[
		x_i = \sum_{j=1}^{n_i} t^i_j x^i_j,
		\quad
		\sum_{j=1}^{n_i} t^i_j = 1,
		\quad \text{and} \quad
		\norm{x^i_j - x_i} \geq \delta \,\, \text{for all } j=1,\ldots,n_i.
\]
Then we can find disjoint measurable sets $A^i_1,\ldots,A^i_{n_i}\subseteq A_i$ such that $\lambda(A^i_j)= t^i_j\lambda(A_i)$ and $\bigcup_{j=1}^{n_i} A^i_j=A_i$. Define 
\begin{equation*}
    M_2(\omega', t):= \sigma'(\omega') \cdot x_j^i
\end{equation*}
if $(\omega',t) \in \Omega_0$ and $t \in A_j^i$ for some $1 \leq i \leq n$ and $1 \leq j \leq n_i$. As before, $M_2$ is well-defined up to a null set, $G$-equivariant and measurable with respect to the sub-$\sigma$-algebra $\A_0\subseteq\A_1\subseteq \A_2$, where $\A_2:=\sigma\big(\{B\times A^i_j\colon B\in\Sigma',1\leq i\leq n, 1\leq j\leq n_i\}\big)$. Again as above, we check that, for a.e. $\omega\in\Omega$, we have 
\[
\|M_1(\omega)-M_2(\omega)\| \geq \delta.
\] 
Continuing in the same way, we produce a $G$-equivariant-on-sets martingale $(M_n)_{n\in\N}$ such that
\begin{itemize}
    \item $\|M_n(\omega)-M_{n+1}(\omega)\| \geq \delta$ for all $n\geq 0$ and 
    \item $\sup_{n\in\N} \|M_n\|_{L_\infty}\leq \sup_{x\in D} \|x\|$, so the martingale is uniformly bounded.
\end{itemize}
However, the first condition certainly prevents the martingale from converging a.e., which yields a contradiction. 
\end{proof}

Observe that if $G$ is locally compact  and second countable, then it is $\sigma$-compact, and thus the finite (probability) measure space $(G, \Sigma_G, \widetilde{\mu_G})$ witnesses the property $\mathcal{F}$ of $G$ (\Cref{ex:propertyF}).

\begin{corollary}\label{cor:G-RNPforinterval}
Let $G$ be a locally compact and second countable group. Then a $G$-Banach space $X$ has the $G$-RNP if and only if it has the $G$-RNP only with respect to the standard probability space $([0,1],\mathcal{B},\lambda)$.
\end{corollary}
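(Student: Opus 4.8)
The forward direction is immediate from \Cref{def:newG-RNP}, since $([0,1],\mathcal{B},\lambda)$ is in particular a finite measure space; all the content is in the converse. The plan is to route it through \Cref{thm:G-RNPimpliesG-dentability}: as $G$ is locally compact, $\sigma$-compact and separable, that theorem gives $G$-RNP $\Longleftrightarrow$ weak $G$-dentability, and its implication $(3)\Rightarrow(1)$ needs only separability of $G$. Hence it suffices to prove that the $G$-RNP with respect to $([0,1],\mathcal{B},\lambda)$ \emph{alone} already forces weak $G$-dentability, and then to invoke $(3)\Rightarrow(1)$ to recover the full $G$-RNP.

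I would prove this by contraposition. Suppose $X$ is not weakly $G$-dentable. Running the construction in the proof of the implication $(2)\Rightarrow(3)$ in \Cref{thm:G-RNPimpliesG-dentability} — which uses nothing about $G$ beyond property $\FF$, guaranteed by \Cref{ex:propertyF} — produces a finite measure space $\Omega=\Omega'\times[0,1]$, $\mu=\mu'\times\lambda$, carrying a continuous point action of $G$ on the first coordinate, together with a uniformly $L_\infty$-bounded $G$-equivariant-on-sets martingale $(M_n)_{n\ge0}$ on $(\Omega,\Sigma,\mu)$, adapted to a $G$-invariant filtration $(\A_n)$, which does not converge a.e. Note that each $M_n$ is essentially separably valued: its range lies in a single orbit $G\cdot x$, and since $G$ is $\sigma$-compact this orbit is a countable union of compact, hence separable, sets.

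The crux is a separable reduction of the measure space that retains both the group action and the martingale. Let $\Sigma_0:=\sigma\big(\{M_n:n\ge0\}\big)\subseteq\Sigma$. Then $\Sigma_0$ is countably generated (each $M_n$ being essentially separably valued, $\sigma(M_n)$ is countably generated) and $G$-invariant (by equivariance of $M_n$ one has $g\cdot M_n^{-1}(E)=M_n^{-1}(g\cdot E)$, so $g$ carries a generating family of $\Sigma_0$ into $\Sigma_0$). Consequently $(\Omega,\Sigma_0,\mu|_{\Sigma_0})$ is a standard probability space, the point action of $G$ restricts to a continuous action on it by regular set isomorphisms (continuity is inherited since $L_1(\Sigma_0)$ is a closed $G$-invariant subspace of $L_1(\Sigma)$), the sub-$\sigma$-algebras $\A_n\cap\Sigma_0$ form a $G$-invariant filtration generating $\Sigma_0$ to which $(M_n)$ is adapted, and $(M_n)$ remains a uniformly $L_\infty$-bounded martingale that does not converge a.e. It is still $G$-equivariant-on-sets on $(\Omega,\Sigma_0,\mu|_{\Sigma_0})$: the defining identity persists once $dg_*\mu/d\mu$ is replaced by $\mathbb{E}^{\Sigma_0}(dg_*\mu/d\mu)$, because $M_n$ is $\Sigma_0$-measurable. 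Finally I would record the elementary fact that $G$-RNP with respect to $([0,1],\mathcal{B},\lambda)$ implies $G$-RNP with respect to an arbitrary standard probability space $(Y,\mathcal{B}_Y,\nu)$: replace $(Y,\nu)$ by the non-atomic standard space $(Y\times[0,1],\nu\times\lambda)\cong([0,1],\mathcal{B},\lambda)$ with $G$ acting on the first coordinate, lift a $G$-quasi-invariant measure $m$ on $Y$ to $\widetilde m(A\times B):=\lambda(B)\,m(A)$ (again $G$-quasi-invariant, with $|\widetilde m|\ll\nu\times\lambda$), obtain a Bochner-integrable representing density $\widetilde f$ from the hypothesis, and integrate out the second coordinate to get a density for $m$.

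Putting the pieces together: by the two preceding paragraphs $X$ has the $G$-RNP with respect to the standard probability space $(\Omega,\Sigma_0,\mu|_{\Sigma_0})$, whereas the proof of $(1)\Rightarrow(2)$ in \Cref{thm:G-RNPimpliesG-dentability} invokes the $G$-RNP only for the measure space carrying the martingale (the vector measure $m(A)=\lim_n\int_A M_n\,d\mu$ extends to all of $\Sigma_0$ precisely because $(M_n)$ is uniformly $L_\infty$-bounded). Hence $(M_n)$ must converge a.e. — a contradiction. Therefore $X$ is weakly $G$-dentable, and $(3)\Rightarrow(1)$ of \Cref{thm:G-RNPimpliesG-dentability} yields the $G$-RNP. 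The step I expect to be most delicate is exactly this separable reduction: keeping $\Sigma_0$ simultaneously countably generated and $G$-invariant, and verifying that the martingale property, the $G$-equivariant-on-sets condition (whose Radon–Nikodým derivative is altered upon restricting the $\sigma$-algebra), continuity of the action, and the extension of the vector measure to all of $\Sigma_0$ all survive the passage to $\Sigma_0$ and the ensuing measure-algebra isomorphism with $([0,1],\mathcal{B},\lambda)$.
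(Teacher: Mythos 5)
Your overall strategy is sound and reaches the conclusion, but it takes a genuinely different and considerably heavier route than the paper. The paper's converse is a one-step observation: taking $\Omega'=G$ with $\widetilde{\mu_G}$ as in \Cref{ex:propertyF}, the space $\Omega=G\times[0,1]$ carrying the martingale from the proof of $(2)\Rightarrow(3)$ of \Cref{thm:G-RNPimpliesG-dentability} is standard Borel and the product measure $\widetilde{\mu_G}\times\lambda$ is a continuous probability measure, so by \cite[Theorem 17.41]{Kechris} the space is \emph{already} measure-isomorphic to $([0,1],\mathcal{B},\lambda)$; the $G$-RNP transfers through this isomorphism and one concludes weak $G$-dentability, then applies $(3)\Rightarrow(1)$ exactly as you do. Your separable reduction to $\Sigma_0=\sigma(\{M_n\})$ and the product trick $(Y\times[0,1],\nu\times\lambda)$ to kill atoms are therefore unnecessary detours in this setting --- though they are not wrong, and they do buy something: your argument never needs the ambient space to be standard Borel, only that the martingale is essentially separably valued, so it would survive even if the witness of property $\FF$ were a non-metrizable or non-separable measure space. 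The price is a long list of verifications (transfer of the martingale property, of $G$-equivariance-on-sets, of continuity of the action, and of the vector measure through a measure-algebra isomorphism) that the paper's route avoids entirely.

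One justification in your reduction is incorrect as stated: $L_1(\Omega,\Sigma_0,\mu|_{\Sigma_0})$ is \emph{not} in general a $G$-invariant subspace of $L_1(\Omega,\Sigma,\mu)$, because the positive isometry attached to $g$ is $U_g(\chi_A)=\frac{dg_*\mu}{d\mu}\chi_{gA}$ and the density $\frac{dg_*\mu}{d\mu}$ need not be $\Sigma_0$-measurable (here the action on $G$ is by translations of a finite measure merely equivalent to Haar measure, so it is genuinely non-measure-preserving). The isometry induced on $L_1(\Sigma_0)$ by the restricted regular set isomorphism is instead $V_g=\mathbb{E}^{\Sigma_0}\circ U_g\restricted_{L_1(\Sigma_0)}$, whose density is $\mathbb{E}^{\Sigma_0}\bigl(\frac{dg_*\mu}{d\mu}\bigr)$ --- consistent with your own replacement of the Radon--Nikod\'ym derivative in the equivariance-on-sets identity. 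Continuity of $g\mapsto V_g$ in the strong operator topology still follows, since $\mathbb{E}^{\Sigma_0}$ is a contraction, so the gap is local and repairable; but the inheritance of continuity has to be argued this way rather than by invariance of the subspace.
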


\begin{proof}
One implication is trivial. For the reverse implication assume that $X$ has the $G$-RNP with respect to the standard probability space $([0,1],\mathcal{B},\lambda)$. Note that the argument in the proof of $(2)\implies (3)$ of \Cref{thm:G-RNPimpliesG-dentability} can be adapted, since in this case it actually relies only on the $G$-RNP with respect to the measure space $(\Omega,\Sigma,\mu)$, where $\Omega=G\times [0,1]$, $\Sigma$ is the Borel $\sigma$-algebra of $G\times [0,1]$, and $\mu = \widetilde{\mu_G} \times\lambda$ with $\widetilde{\mu_G}$ a measure equivalent to the Haar measure on $G$ and $\lambda$ the Lebesgue measure on $[0,1]$.

Notice that $\mu = \widetilde{\mu_G} \times\lambda $ is a probability measure that is non-atomic. Since $G$ is locally compact and second countable, it is Polish, so $G \times [0,1]$, endowed with its Borel $\sigma$-algebra, is a standard Borel space. Therefore, $(\Omega, \Sigma, \mu)$ is a non-atomic standard probability space. Since $(\Omega,\Sigma)$ is standard Borel, $(\Omega,\Sigma,\mu)$ and $([0,1],\mathcal{B},\lambda)$ are isomorphic as measure spaces by \cite[Theorem 17.41]{Kechris}. It follows that $X$ has the $G$-RNP with respect to $(G \times [0,1],\Sigma,\mu)$, and by applying $(2)\implies(3)$ of \Cref{thm:G-RNPimpliesG-dentability}, we conclude that $X$ is weakly $G$-dentable. Finally, $(3)\implies(1)$ of \Cref{thm:G-RNPimpliesG-dentability} shows that $X$ has the $G$-RNP.
\end{proof}

We conclude with the following result, which identifies the classical RNP with the $G$-RNP for the particularly well-behaved class of groups considered in this section. We emphasize that the equivalence is not at all obvious from the definitions, and our proof is obtained indirectly through several intermediate implications between related properties.

\begin{corollary}\label{cor:G-RNPisRNP}  Let $G$ be a locally compact and second countable group. Then, a $G$-Banach space $X$ has the $G$-RNP if and only if it has the RNP.
\end{corollary}

\begin{proof}
One implication is immediate and holds for every group $G$. Conversely, suppose that $X$ has the $G$-RNP. As observed just before \Cref{cor:G-RNPforinterval}, the group $G$ has property $\mathcal{F}$. Therefore, \Cref{thm:G-RNPimpliesG-dentability} implies that $X$ is weakly $G$-dentable. Hence, by \Cref{prop:weakly-G-dentable-and-dentable-are-equivalent}, $X$ is dentable. Therefore $X$ has the RNP (see, for instance, \cite[Chapter VII, Section 6]{DU1977}).
\end{proof}

 \section{Norm-attaining $G$-equivariant operators}\label{section:G-NA}

As a way of providing positive results on the denseness of the set of norm-attaining  operators, Lindenstrauss introduced what is nowadays known as the \emph{Lindenstrauss property A} (see \cite{Lind63}). A Banach space $X$ is said to satisfy this property if, for every Banach space $Y$, the set of norm-attaining operators from $X$ into $Y$ is dense in $\mathcal{L}(X,Y)$. In \Cref{def:G-propertyA}, we introduced \emph{$G$-property A} as a natural equivariant version of this notion which, as mentioned in the introduction, was one of the main motivations for the present work. 

There is a clear connection between the Bishop-Phelps property and Lindenstrauss property A: a Banach space has the former if and only if it has the latter under every equivalent renorming. The reader can check that the same relationship holds between the $G$-BP and $G$-property A when one considers $G$-invariant renormings, i.e., those for which the $G$-action remains isometric with respect to the new norm. In other words, the corresponding linear isomorphism is $G$-equivariant (see e.g. \cite{AFGR, Michal} for recent research on $G$-invariant renormings).

Since the $G$-property~A is weaker than the $G$-BP, there exist more examples of $G$-Banach spaces satisfying it, as we demonstrate in this section (see \Cref{thm:ell1}, \Cref{Prop-c0-dense,prop:cocompact,prop:bigelements}, and \Cref{cor:ATimpliesG-BP} for positive results on the $G$-property A, and \Cref{prop-negative-Lind} for a negative one).

We begin by showing that $\ell_1$ has the $G$-property A for every topological group $G$ and any action. In fact, we shall prove something stronger. Recall that the extreme points of the unit ball of $\ell_1$ are precisely the canonical basis elements $\{\pm e_i\}_{i\in\N}$; hence every linear isometry on $\ell_1$ acts by permuting the canonical basis and changing their signs. In particular, every group acting linearly and isometrically on $\ell_1$ induces an action on $\N$, the index set of the basis elements.

\begin{theorem}\label{thm:ell1}
    Let $G$ be a topological group so that $G \curvearrowright \ell_1$. Then $\ell_1$ with this action of $G$ has the $G$-property A. In fact, if the induced action of $G$ on $\N$ has finitely many orbits, then every $G$-equivariant operator is norm-attaining.
\end{theorem}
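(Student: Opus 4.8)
The plan is to work directly from the structure of linear isometries on $\ell_1$. As noted before the statement, every $g \in G$ permutes the set of extreme points $\{\pm e_i\}_{i\in\N}$ of $B_{\ell_1}$, hence induces a permutation $\sigma_g$ of $\N$ together with a sign choice $\varepsilon_g(i) \in \{\pm 1\}$, so that $g \cdot e_i = \varepsilon_g(i)\, e_{\sigma_g(i)}$. First I would fix a $G$-Banach space $Y$ and a $G$-equivariant operator $T \in \mathcal{L}^G(\ell_1, Y)$, and set $y_i := T(e_i)$. Equivariance forces $g \cdot y_i = \varepsilon_g(i)\, y_{\sigma_g(i)}$ for all $g \in G$ and $i \in \N$; since $G$ acts by isometries on $Y$, this gives $\|y_i\| = \|y_{\sigma_g(i)}\|$, so the function $i \mapsto \|y_i\|$ is constant on each $G$-orbit of $\N$. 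Also $\|T\| = \sup_i \|y_i\|$ because the $e_i$ are (up to sign) the extreme points of $B_{\ell_1}$ and $T$ is linear and bounded.

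For the ``in fact'' clause, assume the induced action of $G$ on $\N$ has finitely many orbits $O_1, \dots, O_k$. Then $i \mapsto \|y_i\|$ takes only finitely many values (one per orbit), so the supremum $\|T\| = \sup_i \|y_i\| = \max_{1 \le j \le k} \|y_{i_j}\|$ (picking any representative $i_j \in O_j$) is actually attained: there is some index $i_0$ with $\|y_{i_0}\| = \|T\|$, and then $\|T(e_{i_0})\| = \|y_{i_0}\| = \|T\|$ with $e_{i_0} \in B_{\ell_1}$, so $T$ is norm-attaining. This handles the stronger assertion; no perturbation is needed.

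For the general statement ($\ell_1$ has $G$-property A for an arbitrary action), I would approximate an arbitrary $T \in \mathcal{L}^G(\ell_1,Y)$ by a norm-attaining $G$-equivariant operator. Let $M := \|T\| = \sup_i \|y_i\|$ and fix $\delta > 0$. Choose $i_0$ with $\|y_{i_0}\| > M - \delta$, and let $O$ be the $G$-orbit of $i_0$ in $\N$; on $O$ the norm $\|y_i\|$ is the constant value $c := \|y_{i_0}\| \in (M-\delta, M]$. Define $S$ by rescaling $T$ on the span of $\{e_i : i \in O\}$ so that $\|S(e_i)\| = M$ for $i \in O$ while $S(e_i) = T(e_i)$ for $i \notin O$: concretely $S(e_i) := \tfrac{M}{c}\, y_i$ for $i \in O$ and $S(e_i) := y_i$ otherwise, extended linearly. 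Since $g$ maps $O$ to $O$ bijectively and acts on $Y$ linearly, the rescaling factor $M/c$ is $G$-invariant, so $S$ is again $G$-equivariant; and $\|S - T\| = \sup_{i \in O} \|S(e_i) - y_i\| = \bigl(\tfrac{M}{c} - 1\bigr) c = M - c < \delta$, so $\|S\| = M = \|T - (T-S)\|$ is unchanged and $S$ attains its norm at $e_{i_0}$ since $\|S(e_{i_0})\| = M = \|S\|$. (One should double-check $\|S\| = M$: for $x = \sum_i a_i e_i \in B_{\ell_1}$, $\|Sx\| \le \sum_i |a_i| \|S(e_i)\| \le M \sum_i |a_i| \le M$, with equality approached along $e_{i_0}$.) This exhibits $S \in \NA^G(\ell_1,Y)$ within $\delta$ of $T$, proving density.

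The main obstacle I anticipate is purely bookkeeping: verifying that the rescaled operator $S$ remains genuinely $G$-equivariant and genuinely bounded with norm exactly $M$. The subtlety is that rescaling a \emph{single} orbit is safe precisely because orbits are $G$-invariant and the common norm value $c$ on an orbit is a $G$-invariant quantity, so the scalar $M/c$ commutes with the $G$-action; if one tried to rescale coordinate-by-coordinate this would break equivariance. There is also a minor point to confirm that $\|T\| = \sup_i\|T e_i\|$ holds for \emph{all} bounded linear $T$ on $\ell_1$ (standard, via the decomposition of the unit ball), which underlies both the attainment claim and the norm estimate for $S - T$.
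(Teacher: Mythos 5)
Your proposal is correct and follows essentially the same route as the paper: both arguments observe that $\|Te_i\|$ is constant on each orbit of the induced action on $\N$ (which gives attainment immediately when there are finitely many orbits), and both perturb a general $T$ by a multiple of $T\circ P_O$, where $P_O$ is the canonical projection onto the span of the basis vectors in the orbit $O$ of a near-maximizing index. The only difference is cosmetic: the paper scales that orbit up by a factor $1+\eps$ so its values strictly dominate all others, while you scale it up to exactly $\|T\|$; both yield an equivariant norm-attaining operator within $\delta$ of $T$.
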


\begin{proof} 
    Given the action $G\curvearrowright \ell_1$, consider the induced action of $G$ on $\N$. Let $(I_n)_{n\in\Gamma}$ be an enumeration of the orbits of this action, where $\Gamma$ is a finite or countably infinite index set. Let $Y$ be any Banach space and $T \in \mathcal{L}^G (\ell_1, Y)$. We split the proof into two cases.

	\noindent
	\textbf{Case 1} $|\Gamma|<\infty$: In this case, since $\|T\|=\sup_{n \in\mathbb{N}} \|Te_n\|$, we conclude that $T \in \NA^G (\ell_1, Y)$.

	\noindent
	\textbf{Case 2} $|\Gamma|=\infty$: We may assume that $\Gamma=\N$ and $\|T\|=1$.
	Given $\eps \in (0,1)$, choose $0 < \eta < \e$. Let $n \in \mathbb{N}$ be such that $\|Te_n\| >1-\eta/2$. For simplicity, suppose that {$\|Te_1\|>1-\eta/2$, and $1\in I_1$}. Let $P_1 : \ell_1 \to \ell_1$ be the canonical projection onto $\ell_1 (I_1)$, that is, $( P_1 (x) )(i) = x(i)$ when $i \in I_1$ and $0$ otherwise (noting that $I_1 \neq \mathbb{N})$.
	Consider $S: \ell_1 \to Y$ defined by
	\begin{equation*} 
	Sx := Tx + \eta \, T\circ P_1 (x)
	\end{equation*} 
	for every $x \in \ell_1$. Notice that $S$ is $G$-equivariant and $\|S-T\| \leq \eta < \varepsilon$. Moreover, 
	\begin{equation*}
	\|Se_i\| = \|Te_i + \eta Te_i\| > (1+\eta)\left(1-\frac{\eta}{2}\right) >1
	\end{equation*} 
	for every $i \in I_1$, while $\|Se_j\| = \|Te_j\| \leq 1$ for all $j \in \mathbb{N} \setminus I_1$. This implies that $\|S\| = \sup_{i \in I_1} \|Se_i\| = \|Se_1\|$. This shows that $S \in \NA^G(\ell_1 ,Y)$. 
\end{proof}

An application of \Cref{thm:ell1} yields the following result.

\begin{proposition} \label{Prop-c0-dense} Let $G$ be a topological group and $X$ be a reflexive $G$-Banach space. Suppose that $G$ also acts on $c_0$ linearly isometrically. Then $\NA^G(X,c_0)$ is dense in $\mathcal{L}^G(X,c_0)$.

\end{proposition}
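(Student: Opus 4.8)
The plan is to argue exactly as in the proof of \Cref{thm:ell1}, but on the \emph{range} coordinates, with reflexivity of $X$ playing the role that was played there by the triviality of norm‑attainment for real scalars. Identify $T\in\mathcal{L}^G(X,c_0)$ with the sequence $(\ell_n)_n\subseteq X^*$, where $\ell_n:=e_n^*\circ T$; then $\|T\|=\sup_n\|\ell_n\|$ and $(\ell_n(x))_n\in c_0$ for every $x\in X$. Since each $g\in G$ acts on $c_0$ as an invertible, hence surjective, linear isometry, and every linear isometry of real $c_0$ is a signed permutation of the unit vector basis, the action $G\curvearrowright c_0$ induces an action of $G$ on $\mathbb{N}$; $G$-equivariance of $T$ then forces $\|\ell_n\|$ to be constant on each $G$-orbit in $\mathbb{N}$ (as $\ell_n\circ g=\pm\ell_{\sigma_g(n)}$). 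The first step is the observation that $T$ attains its norm on $B_X$ if and only if some $\ell_{n_0}$ with $\|\ell_{n_0}\|=\|T\|$ attains its norm on $B_X$: if $\|Tx_0\|_\infty=\|T\|$ then, because $(\ell_n(x_0))_n\in c_0$, the supremum is a maximum $|\ell_{n_0}(x_0)|=\|T\|$ and the chain $\|T\|=|\ell_{n_0}(x_0)|\leq\|\ell_{n_0}\|\leq\|T\|$ collapses; the converse is immediate. As $X$ is reflexive, every element of $X^*$ attains its norm, so this reduces to: $T\in\NA^G(X,c_0)$ exactly when the supremum $\sup_n\|\ell_n\|$ is \emph{attained}.

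Now let $T\in\mathcal{L}^G(X,c_0)$ with $\|T\|=1$ and $\eps\in(0,1)$. Choose $n_0$ with $\|\ell_{n_0}\|>1-\eps/2$ and let $I:=G\cdot n_0\subseteq\mathbb{N}$ be its $G$-orbit, so $\|\ell_n\|=\|\ell_{n_0}\|$ for all $n\in I$. Let $Q_I\colon c_0\to c_0$ be the projection onto the coordinates in $I$; since $I$ is $G$-invariant, $Q_I$ commutes with the signed‑permutation action of $G$, hence is $G$-equivariant with $\|Q_I\|\leq1$. Put
\[
S:=T+\eps\,(Q_I\circ T)\in\mathcal{L}^G(X,c_0),
\]
a $G$-equivariant operator with $\|S-T\|=\eps\,\|Q_I\circ T\|\leq\eps$. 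The $n$-th coordinate functional of $S$ equals $(1+\eps)\ell_n$ for $n\in I$ and $\ell_n$ for $n\notin I$, so it has norm $(1+\eps)\|\ell_{n_0}\|>(1+\eps)(1-\eps/2)>1\geq\|\ell_n\|$ whenever $n\notin I$. Therefore $\|S\|=(1+\eps)\|\ell_{n_0}\|$, and this supremum is attained — indeed at \emph{every} index of the orbit $I$. Fix $m\in I$; by reflexivity pick $x_0\in B_X$ with $|\ell_m(x_0)|=\|\ell_m\|=\|\ell_{n_0}\|$; then $\|Sx_0\|_\infty\geq|(Sx_0)_m|=(1+\eps)\|\ell_{n_0}\|=\|S\|$, so $S\in\NA^G(X,c_0)$, giving the density. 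This is literally the argument of \Cref{thm:ell1} with the roles of domain and range interchanged; the very same $S$ arises if one instead passes to $T^*\in\mathcal{L}^G(\ell_1,X^*)$, applies \Cref{thm:ell1} to the $G$-Banach space $X^*$, and transports the approximant back along the adjoint (using $X^{**}=X$), but that route still needs reflexivity to deduce that $S$, rather than $S^*$, attains its norm.

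The step I expect to be the main obstacle is keeping the perturbation $G$-equivariant: inflating a single coordinate functional $\ell_{n_0}$ would destroy equivariance, and the remedy is to inflate the whole orbit $I$ by the same factor, which is legitimate precisely because $I$ is $G$-invariant (so $Q_I$ is equivariant) and because $\|\ell_n\|$ is constant along $I$ (so the inflated supremum is attained at every index of $I$, with no limiting argument required). Everything else — James' theorem for the reflexive $X$, the elementary $\eps$-estimates, and the signed‑permutation structure of $\Iso(c_0)$ — is routine.
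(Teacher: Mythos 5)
Your proof is correct and is essentially the paper's own argument: the paper passes to $T^*\in\mathcal{L}^G(\ell_1,X^*)$, applies \Cref{thm:ell1}, and transports the perturbation back along the adjoint using reflexivity, which yields exactly the operator $S=T+\eps\,(Q_I\circ T)$ that you construct directly on the range side. The only difference is packaging (a direct coordinate-functional argument versus duality), and you correctly isolate where reflexivity enters, namely in the norm attainment of the functionals $\ell_n$.
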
 
\begin{proof}
	Let $T \in \mathcal{L}^G(X, c_0)$ and $\eps > 0$. If we equip $X^*$ with the standard dual action, then the property of being $G$-equivariant passes to adjoint operators, so $T^* \in \mathcal{L}^G(\ell_1, X^*)$. By \Cref{thm:ell1}, there exists $S \in \NA^G(\ell_1, X^*)$ with $\|T^* - S\| < \eps$. Taking adjoints again, we get that $S^* \in \NA^G(X^{**}, \ell_\infty)$ and $\|T^{**} - S^*\| < \eps$. Since $X$ is reflexive, we identify $X$ with $X^{**}$. Thus, $S^*$ may be regarded as an operator from $X$ to $\ell_\infty$, and it satisfies $\|T - S^*\|<\e$. 
    
    Since $S \in \NA^G(\ell_1, X^*)$, there exists $w \in S_{\ell_1}$ such that $\|Sw\| = \|S\|$. Since $X$ is reflexive, the functional $Sw \in X^*$ also attains its norm on $B_X$ at some $x \in S_X$. Thus,
    \[
    |(Sw)(x)| = |(S^*x )(w)| \leq \|S^*x\|_{\infty} \leq \|S^*|_X\| \leq \|S\|.
    \]
    In particular, $\|S^*x\|_{\infty} = \|S^*\|$ and $S^*$ is norm-attaining. It remains only to show that the range of $S^* $ is contained in $c_0$.

	By inspecting the proof of \Cref{thm:ell1}, if the induced action on $\mathbb{N}$ has finitely many orbits, take $S=T^*$, and the conclusion is immediate. Otherwise, we see that $S = T^* + \eta\, T^* \circ P$, where $0 < \eta < \varepsilon$ and $P: \ell_1 \to \ell_1$ is the canonical projection onto $\overline{\mathrm{span}}\{e_i : i \in I\}$, where $I \subseteq \N$ is the set of indices $i$ such that either $e_i$ or $-e_i$ is an element of $G \cdot e_j$ for some fixed $j \in \N$. Looking at the adjoint of $S$, we get $S^* = T + \eta\, P^* \circ  T$, where $P^*: \ell_\infty \to \ell_\infty$ is the canonical projection onto the subspace $\ell_\infty(I) = \{x \in \ell_\infty : (\forall i \notin I) \ x(i) = 0\}$. However, since the range of the restricted map $P^* \restricted_{c_0} : c_0 \to \ell_\infty$ is contained in $c_0$, we conclude that the image of $S^*$ is contained in $c_0$; hence, $S^* \in \NA^G(X, c_0)$.
\end{proof}

In contrast to \Cref{Prop-c0-dense}, when we consider $c_0$ in the domain we get the following negative result.

\begin{proposition} \label{prop-negative-Lind}
For every compact group $G$ acting on $c_0$ by linear isometries, there is a $G$-Banach space $Y$ such that $\overline{\NA^G (c_0, Y)} \neq \mathcal{L}^G (c_0, Y)$.
\end{proposition}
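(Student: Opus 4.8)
The plan is to transplant to the equivariant setting the classical phenomenon that $c_0$ fails Lindenstrauss' property~A. I would take $Y$ to be $c_0$ endowed with a $G$-invariant \emph{strictly convex} equivalent norm $\vertiii{\cdot}$, and show that the formal identity $J:c_0\to Y$ — which is $G$-equivariant for the common action on the two copies of $c_0$, and is an isomorphism — lies at a positive distance from $\NA^G(c_0,Y)$. The geometric point behind this is that $S_{c_0}$ contains a nondegenerate line segment through every one of its points: if $x\in S_{c_0}$ and $|x(N)|$ is small, then $x\pm te_N\in S_{c_0}$ for a small $t>0$; and an injective operator into a strictly convex space cannot carry such a segment to a segment lying on the image sphere.

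First I would produce $Y$. As $c_0$ is separable, it admits an explicit strictly convex equivalent norm, for instance $p(x)^2:=\|x\|_\infty^2+\sum_{n}4^{-n}x_n^2$. Using compactness of $G$, let $\mu_G$ be its normalized Haar measure and set $\vertiii{x}:=\int_G p(g\cdot x)\,d\mu_G(g)$. I would check that $\vertiii{\cdot}$ is a norm (integrate the pointwise inequality $p(g\cdot(x+y))\le p(g\cdot x)+p(g\cdot y)$ over $g$), that it is equivalent to $\|\cdot\|_\infty$ (uniformly in $g$, since each $g$ is a $\|\cdot\|_\infty$-isometry and $p$ is equivalent to $\|\cdot\|_\infty$), and that it is $G$-invariant by invariance of $\mu_G$. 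Strict convexity is inherited from $p$: if $\vertiii{x}=\vertiii{y}=\vertiii{\tfrac{x+y}{2}}=:c>0$, then the pointwise bound $p(g\cdot\tfrac{x+y}{2})\le\tfrac12(p(g\cdot x)+p(g\cdot y))$ has the same integral on both sides, hence is an equality for $\mu_G$-a.e.\ $g$; fixing one such $g_0$ and using strict convexity of $p$ we get that $g_0\cdot x$ and $g_0\cdot y$ are positively proportional, hence so are $x$ and $y$, and then $x=y$ since $\vertiii{x}=\vertiii{y}$. So $Y:=(c_0,\vertiii{\cdot})$, with the original $G$-action, is a $G$-Banach space.

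Next I would establish the key lemma: \emph{no nonzero injective bounded operator $S:c_0\to Z$ with $Z$ strictly convex attains its norm}. Suppose $\|Sx_0\|=\|S\|=M>0$ with $x_0\in S_{c_0}$. As $x_0\in c_0$, the value $\|x_0\|_\infty=1$ is attained at some coordinate $m_0$; choosing $N\neq m_0$ with $|x_0(N)|<1$ we have $\|x_0\pm te_N\|_\infty=1$ for $0<t\le 1-|x_0(N)|$ (the supremum being already attained at $m_0$). From $x_0=\tfrac12\big((x_0+te_N)+(x_0-te_N)\big)$ and $\|S(x_0\pm te_N)\|\le M$ we get $\|S(x_0+te_N)\|=\|S(x_0-te_N)\|=M$ together with equality in the triangle inequality; strict convexity of $Z$ then forces $S(x_0+te_N)=S(x_0-te_N)$, i.e.\ $2t\,Se_N=0$, contradicting injectivity of $S$.

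Finally I would conclude. Let $c:=\|J^{-1}\|=\sup\{\|x\|_\infty:\vertiii{x}\le1\}<\infty$. If some $S\in\NA^G(c_0,Y)$ satisfied $\|S-J\|<c^{-1}$, then $S=J\circ(\id-J^{-1}(J-S))$ with $\|J^{-1}(J-S)\|<1$, so $S$ would be an isomorphism $c_0\to Y$, in particular injective, hence not norm-attaining by the lemma — a contradiction. Therefore $\dist\big(J,\NA^G(c_0,Y)\big)\ge c^{-1}>0$, so $J\in\mathcal{L}^G(c_0,Y)\setminus\overline{\NA^G(c_0,Y)}$, which proves the proposition. I expect the main subtlety to be verifying that Haar averaging preserves strict convexity (the ``a.e.\ $\Rightarrow$ some $g_0$'' step, which is exactly where compactness of $G$ is used), together with the elementary but slightly fussy choice of the coordinate $N$ in the lemma; the rest is routine bookkeeping.
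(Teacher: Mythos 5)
Your proof is correct and follows essentially the same route as the paper: both take $Y$ to be $c_0$ equipped with a strictly convex $G$-invariant equivalent norm and exhibit the ($G$-equivariant) identity operator as an element of $\mathcal{L}^G(c_0,Y)$ that cannot be approximated by norm-attaining operators. The only difference is in the packaging --- the paper cites the existence of such a renorming and the fact that norm-attaining operators from $c_0$ into a strictly convex space must have finite rank, whereas you construct the renorming by Haar averaging and replace the finite-rank lemma with the (equally valid) observation that a nonzero injective operator from $c_0$ into a strictly convex space never attains its norm, combined with a Neumann-series perturbation step.
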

\begin{proof}
Let $(X,\|\cdot\|)$ be a separable Banach space and $G$ be a compact group such that $\alpha: G \curvearrowright X$ by linear isometries. By \cite[Proposition 2.1]{AFGR} or \cite[Corollary 4.4 (1)]{Michal}, there exists a strictly convex renorming $Y=(X, \vertiii{\cdot})$ such that the norm is invariant under the group action $G$, that is, $\vertiii{x}=\vertiii{\alpha(g,x)}$ for every $x \in X$ and $g \in G$. Notice that, for such a renorming $Y$, the identity operator $I :(X,\|\cdot\|)\to Y=(X,\vertiii{\cdot})$ is $G$-equivariant. 

As a concrete case, take $X=c_0$ with the supremum norm. Let $Y=(c_0,\vertiii{\cdot})$ be a strictly convex space such that $\vertiii{\cdot}$ is $G$-invariant. It is well known that if a bounded linear operator from $c_0$ to $Y$ is norm-attaining, then it must be of finite rank (see, for instance, \cite[Lemma 2]{Martin2014}); so $I \in \mathcal{L}^G (c_0, Y)$ cannot be approximated by norm-attaining operators. 
\end{proof}

In order to obtain further examples of $G$-Banach spaces with $G$-property A, we recall the following definitions that will be useful in this context.

Let $M$ be a metric space and $G$ be a topological group acting continuously by isometries on $M$. By $M\sslash G$ we shall denote the space of closures of orbits of $G$ equipped with the infimum distance. That is, \[M\sslash G:=\{\overline{G\cdot x}\colon x\in M\}\] and for $\overline{G\cdot x},\overline{G\cdot y}\in M\sslash G$, we set 
\[
D(\overline{G\cdot x},\overline{G\cdot y}):=\inf_{g\in G} d_M(x,gy).
\] 
It is straightforward to verify that this is a well-defined metric. We can now introduce the following definition.

\begin{definition}   \label{definition:omega-categorical}
Let $G$ be a topological group, $X$ be a $G$-Banach space, and let $n\in\N$. We say that the action $G\curvearrowright X$ is {\it $n$-cocompact} whenever the metric space $B_X^n\sslash G$ is compact, where $B_X^n$ is endowed with the sum metric. 

A separable Banach space $X$ is said to be \emph{$\omega$-categorical} if $\iso(X) \curvearrowright X$ is $n$-cocompact for every $n\in\N$.
\end{definition}

Several remarks are in order. First, it is easy to check that one may equivalently use $S_X$ instead of $B_X$ in the definition of $n$-cocompactness (see, e.g., \cite{FeRV26}, where the former convention is adopted). 

Second, one may wonder whether the assumption of $\omega$-categoricity is unnecessarily strong, since in the applications below we only make direct use of $1$-cocompactness (although this would change when dealing with multilinear maps). However, it turns out that $1$-cocompactness of an action $G\curvearrowright X$ is equivalent to $n$-cocompactness for every $n$ under rather natural topological assumptions on $G$; see \cite{BYTs16}.

Consequently, it is not surprising that the main examples of $1$-cocompact actions arise from $\omega$-categorical Banach spaces. This, in turn, connects the $G$-property A with a rather different area of Banach space theory. Indeed, the notion of an $\omega$-categorical Banach space comes from model theory of Banach spaces where it forms one of the most interesting classes of separable Banach spaces fully determined by their first-order theory. The model-theory-free definition above comes from (the continuous version of) the Ryll-Nardzewski theorem which reformulates the original definition of $\omega$-categoricity through the actions of linear isometry groups (see \cite{BYU07}). For model theory of Banach spaces, and more generally metric structures, we refer the interested reader to \cite{ModelTheoryForMS}.

The following is a short list of examples of such Banach spaces (also called \emph{$\aleph_0$-categorical Banach spaces}). For more information about them, we send the reader to the references provided below along with the examples as well as to \cite{Kha21}.
\begin{enumerate}
    \item $\ell_2(\N)$ -folklore, see also the example below;
    \item $L_p$, for all $1\leq p<\infty$ - see \cite{ModelTheoryForMS};
    \item $C(2^\N)$ - by Henson (unpublished);
    \item the Gurarii space - see \cite{YaHe17};
    \item $L_p(L_q)$, for all $1\leq p,q<\infty$ - see \cite{HenRay11};
    \item if $1\leq p<\infty$ and $X$ is $\omega$-categorical, then so is $L_p(X)$ (\cite{FeLPOP}).
\end{enumerate}

\begin{proposition}\label{prop:cocompact}  
Let $X$ be a $G$-Banach space, for some topological group $G$, and suppose $G \curvearrowright X$ is $1$-cocompact. Then, every $G$-equivariant operator is norm-attaining. In other words, $\NA^G(X,Y) = \mathcal{L}^G(X,Y)$ for every $G$-Banach space $Y$. In particular, $X$ has the $G$-property A.
\end{proposition}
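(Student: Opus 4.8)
The plan is to exploit the fact that, for a $G$-equivariant operator $T \in \mathcal{L}^G(X,Y)$, the scalar function $x \mapsto \norm{Tx}$ is constant on $G$-orbits, and hence descends to the orbit space $B_X \sslash G$, on which a straightforward compactness argument then finishes the job.

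First I would record the elementary observation that if $T$ is $G$-equivariant and $g \in G$, then $\norm{T(g\cdot x)} = \norm{g\cdot T(x)} = \norm{T(x)}$, since $g$ acts on $Y$ by a linear isometry. Thus the map $\varphi \colon B_X \to \R$, $\varphi(x) := \norm{Tx}$, is $G$-invariant; it is moreover $\norm{T}$-Lipschitz because $|\varphi(x) - \varphi(y)| \le \norm{T(x-y)} \le \norm{T}\,\norm{x-y}$. Being continuous and $G$-invariant, $\varphi$ is constant on $G$-orbit closures: indeed, if $\overline{G\cdot x} = \overline{G\cdot y}$, then $g_\alpha\cdot x \to y$ for some net $(g_\alpha)$, whence $\varphi(x) = \varphi(g_\alpha\cdot x) \to \varphi(y)$. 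Consequently $\varphi$ well-defines a function $\widetilde\varphi \colon B_X\sslash G \to \R$ by $\widetilde\varphi(\overline{G\cdot x}) := \norm{Tx}$. (Note that since $G$ acts by isometries, $G\cdot x \subseteq B_X$ and hence $\overline{G\cdot x}\subseteq B_X$ for $x\in B_X$, so representatives can always be chosen in $B_X$.)

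Next I would check that $\widetilde\varphi$ is continuous for the infimum metric $D$ on $B_X\sslash G$. Given $x, y \in B_X$ and $\eta > 0$, choose $g \in G$ with $\norm{x - g\cdot y} < D(\overline{G\cdot x}, \overline{G\cdot y}) + \eta$; then
\begin{align*}
	|\widetilde\varphi(\overline{G\cdot x}) - \widetilde\varphi(\overline{G\cdot y})|
	&= |\varphi(x) - \varphi(g\cdot y)|
	\le \norm{T}\,\norm{x - g\cdot y} \\
	&\le \norm{T}\big(D(\overline{G\cdot x},\overline{G\cdot y}) + \eta\big),
\end{align*}
and letting $\eta \to 0$ shows that $\widetilde\varphi$ is $\norm{T}$-Lipschitz on $(B_X\sslash G, D)$. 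Since $G\curvearrowright X$ is $1$-cocompact, $B_X\sslash G = B_X^1 \sslash G$ is compact, so $\widetilde\varphi$ attains its maximum at some $\overline{G\cdot x_0}$ with $x_0 \in B_X$. Because every point of $B_X$ lies in some orbit, $\norm{T} = \sup_{x\in B_X}\norm{Tx} = \sup\{\widetilde\varphi(\omega) : \omega \in B_X\sslash G\} = \widetilde\varphi(\overline{G\cdot x_0}) = \norm{Tx_0}$, so $T$ attains its norm at $x_0$. As $T \in \mathcal{L}^G(X,Y)$ and the $G$-Banach space $Y$ were arbitrary, this gives $\NA^G(X,Y) = \mathcal{L}^G(X,Y)$ for all $Y$, and in particular $X$ has the $G$-property A.

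The only point requiring any care — and it is genuinely routine here — is the passage to the quotient metric space: one must make sure that the infimum metric $D$ on $B_X\sslash G$ is exactly the one for which $\varphi$ descends to a Lipschitz function, which is precisely the estimate displayed above. No harmonic analysis or representation theory is needed; the whole argument rests on the $1$-cocompactness hypothesis together with the elementary fact that norms of $G$-equivariant images are constant along orbits.
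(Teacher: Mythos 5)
Your proof is correct and follows essentially the same route as the paper: both arguments pass to the orbit space $B_X\sslash G$ and use $1$-cocompactness to extract a maximizer of the (descended, Lipschitz) norm function. The only cosmetic difference is that you descend the scalar function $x\mapsto\norm{Tx}$ directly to $B_X\sslash G\to\R$, whereas the paper first induces a map $T'\colon B_X\sslash G\to Y\sslash G$ and then composes with $D(\,\cdot\,,0)$; your version is slightly leaner but the content is identical.
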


\begin{proof} Let $T \in \mathcal{L}^G (X,Y)$ be given. 
   Let us consider the map $T' : B_X \sslash G \to Y \sslash G$ given by $T' (\overline{G\cdot x}) = \overline{G \cdot Tx}$ for each $x \in B_X$. We claim that the map is well-defined. In fact, if $\overline{G\cdot x} = \overline{G\cdot y}$, then there exists a sequence $(g_n) \subseteq G$ such that $\|x-g_ny\| \to 0$. Then 
   \[\| Tx - g_n Ty\|= \|Tx-T(g_ny)\| \leq \|T\| \|x-g_ny\| \to 0.
   \] Thus $\overline{G\cdot Tx}= \overline{G\cdot Ty}$. Moreover, notice that 
    \[
    \|Tx \| = \inf_{g\in G} \|Tx-g\cdot 0\| = D(\overline{G\cdot Tx}, 0) = D(T' (\overline{G\cdot x}), 0) 
    \]
    for every $x \in B_X$; $\|T\|=\sup \{ D(T' (\overline{G\cdot x}), 0) : \overline{G\cdot x} \in B_X \sslash G\}$. If we prove that $T'$ is continuous, then the preceding supremum is indeed achieved since $B_X \sslash G$ is compact. To see that $T'$ is continuous, pick $\overline{G\cdot x}\in B_X\sslash G$ and $\varepsilon>0$. Then there is $\delta>0$ such that for every $y\in B_X$ with $\|x-y\|<\delta$ we have $\|T(x)-T(y)\|<\varepsilon$. Then if $\overline{G\cdot z}\in B_X\sslash G$ is such that $D(\overline{G\cdot x},\overline{G\cdot z})<\delta$ there is $g\in G$ so that $\|x-gz\|<\delta$ thus 
    \[
    D(T'(\overline{G\cdot x}), T'(\overline{G\cdot z})) =\inf_{h \in G} \|Tx- h Tz \| \leq \|Tx - T(gz)\| <\varepsilon. \qedhere
   \] \end{proof}

As an immediate consequence of Proposition \ref{prop:cocompact} we have that $\omega$-categorical Banach spaces satisfy the $G$-property A.

\begin{corollary}
Every $\omega$-categorical Banach space $X$ has the $G$-property A with respect to $G=\Iso(X)$.
\end{corollary}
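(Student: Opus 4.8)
The plan is to deduce this immediately from \Cref{prop:cocompact}. Recall that, by \Cref{definition:omega-categorical}, a separable Banach space $X$ is $\omega$-categorical precisely when the tautological action $\Iso(X)\curvearrowright X$ is $n$-cocompact for \emph{every} $n\in\N$; here $\Iso(X)$ is equipped with the strong operator topology, under which (as recalled in the Notation paragraph of \Cref{section:prelim}) it is a topological group acting continuously by linear isometries on $X$, so that all the $G$-equivariant notions make sense for $G=\Iso(X)$. In particular, specializing to $n=1$, the action $\Iso(X)\curvearrowright X$ is $1$-cocompact, i.e.\ the orbit space $B_X\sslash G$ is compact.

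With this in hand, I would apply \Cref{prop:cocompact} with $G=\Iso(X)$: it gives that every $G$-equivariant operator $T\in\mathcal{L}^G(X,Y)$ attains its norm, for every $G$-Banach space $Y$; equivalently, $\NA^G(X,Y)=\mathcal{L}^G(X,Y)$ for every $G$-Banach space $Y$. A fortiori $\NA^G(X,Y)$ is dense in $\mathcal{L}^G(X,Y)$ for every $G$-Banach space $Y$, which is exactly the assertion that $X$ has the $G$-property A with respect to $G=\Iso(X)$.

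There is no genuine obstacle in the argument: all the real content sits in \Cref{prop:cocompact} (whose proof rests on the compactness of $B_X\sslash G$ together with the continuity of the induced map $T'$ on orbit spaces), and the only thing to observe is the purely definitional point that $\omega$-categoricity entails $1$-cocompactness of $\Iso(X)\curvearrowright X$. In fact $\omega$-categoricity is more than is needed for this conclusion—$1$-cocompactness of the full isometry action alone suffices—so the same proof shows the statement for any separable $X$ for which $\Iso(X)\curvearrowright X$ is merely $1$-cocompact.
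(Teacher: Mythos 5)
Your proposal is correct and matches the paper exactly: the corollary is stated there as an immediate consequence of \Cref{prop:cocompact}, obtained by specializing $\omega$-categoricity to $n=1$ to get $1$-cocompactness of $\Iso(X)\curvearrowright X$. Your closing remark that $1$-cocompactness alone suffices is also precisely how the paper organizes the material.
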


We will prove another positive result in \Cref{prop:bigelements} for Banach spaces that contain big points. Let us recall the notions of big points, convex transitivity, and almost transitivity.

\begin{definition} \label{def:big-points}
Let $X$ be a Banach space. We say that 
\begin{enumerate}
\item[(a)] $x\in S_X$ is a \emph{big point} if $\overline{\co}\big(\Iso(X)\cdot x\big)=B_X$.
\item[(b)] $X$ is \emph{convex-transitive} if every element $x\in S_X$ is a big point. 
\item[(c)] $X$ is \emph{almost-transitive} if there is/for every $x\in S_X$ the orbit $\Iso(X)\cdot x$ is dense in $S_X$.
\end{enumerate}
\end{definition}
Clearly, every almost-transitive Banach space is convex-transitive, whereas $C(K)$, where $K$ denotes the Cantor set, is a convex-transitive Banach space that is not almost-transitive \cite{PelczynskiRolewicz1962}. For more information on convex-transitive Banach spaces and big points as well as almost-transitive Banach spaces we refer the reader to \cite{BG-RP99,BG-RP02}. We note that these properties were developed as weakenings of transitive Banach spaces related to the famous Mazur rotation problem and we refer to the survey \cite{CS-FeRa22} as a background for these connections. Now we can provide the promised result.
 
\begin{proposition}\label{prop:bigelements}
Let $X$ be a Banach space that contains a big point. Then, every $G$-equivariant operator is norm-attaining at that point, where $G=\Iso(X)$. In particular, $X$ has the $G$-property A.
\end{proposition}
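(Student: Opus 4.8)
The plan is to exploit the interplay between $G$-equivariance and the fact that $G=\Iso(X)$ acts on any target $G$-Banach space by \emph{linear isometries}. Fix a big point $x_0\in S_X$, so that $\overline{\co}\big(\Iso(X)\cdot x_0\big)=B_X$, and let $Y$ be an arbitrary $G$-Banach space together with an operator $T\in\mathcal{L}^G(X,Y)$. The goal is to prove the single equality $\|Tx_0\|=\|T\|$, which already yields everything since $Y$ is arbitrary.

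The first step is to observe that $T$ has constant norm along the orbit of $x_0$: for every $g\in G$, equivariance of $T$ gives $T(g\cdot x_0)=g\cdot T(x_0)$, and since $g$ acts on $Y$ by a linear isometry we get $\|T(g\cdot x_0)\|=\|g\cdot T(x_0)\|=\|Tx_0\|$. The second step propagates this bound to all of $B_X$ by convexity and continuity: if $x=\sum_{i=1}^{n}\lambda_i\,(g_i\cdot x_0)$ is a convex combination of orbit points, then by linearity of $T$ and the triangle inequality $\|Tx\|\leq\sum_{i=1}^{n}\lambda_i\,\|T(g_i\cdot x_0)\|=\|Tx_0\|$; thus $\|Tx\|\leq\|Tx_0\|$ on $\co(\Iso(X)\cdot x_0)$, and since $T$ and the norm are continuous, the same inequality holds on the closure $\overline{\co}(\Iso(X)\cdot x_0)=B_X$. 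Consequently $\|T\|=\sup_{x\in B_X}\|Tx\|\leq\|Tx_0\|\leq\|T\|\,\|x_0\|=\|T\|$, so equality holds throughout and $T$ attains its norm precisely at $x_0$.

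From here the conclusion is immediate: the argument shows $\NA^G(X,Y)=\mathcal{L}^G(X,Y)$ for every $G$-Banach space $Y$, which is exactly the statement that $X$ has the $G$-property~A. I do not expect any genuine obstacle here; unlike the earlier results of the paper, this uses no harmonic analysis or representation theory, only the definition of a big point and the isometric nature of the $G$-action on $Y$. The one point worth stating carefully is that it is the isometry property of the target action which makes $\|T(g\cdot x_0)\|$ independent of $g$, so the hypothesis that $Y$ is a $G$-Banach space (and not merely a Banach space carrying some bounded $G$-action) is used essentially.
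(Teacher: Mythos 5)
Your proof is correct and uses the same essential ingredients as the paper's: the identity $\|T(g\cdot x_0)\|=\|g\cdot T(x_0)\|=\|Tx_0\|$ coming from equivariance plus the isometric target action, combined with the triangle inequality over convex combinations of orbit points and a density/continuity step. The only cosmetic difference is that you bound $\|Tx\|\leq\|Tx_0\|$ uniformly on $B_X$, whereas the paper runs the same estimate through a near-maximizing convex combination and an $\varepsilon$-argument; the content is identical.
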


\begin{proof}
Let $Y$ be a $G$-Banach space, $T \in \mathcal{L}^G (X,Y)$, and let $x \in X$ be a big point. We claim that $\|T\|=\|T(x)\|$. Indeed, for any $\varepsilon>0$, there exist $\lambda_1, \ldots, \lambda_k \geq 0$ satisfying $\sum_{i=1}^k \lambda_i=1$ and $(g_i)_{i\leq k}\subseteq G=\Iso(X)$ such that 
\begin{equation*}
    \|T\| - \e < \left\| T \left( \sum_{i=1}^k \lambda_i g_i x \right) \right\| = \left\| \sum_{i=1}^k \lambda_i g_i T(x) \right\| \leq \sum_{i=1}^k \lambda_i \|g_iT(x)\| = \|T(x)\|.
\end{equation*}

Since $\varepsilon>0$ was arbitrary, we get $\|T(x)\|\geq \|T\|$, so $\|T(x)\|=\|T\|$.
\end{proof}

For Banach spaces that are moreover almost-transitive, one can get something stronger. In particular, it applies to every $L_p$ space with $p \in \left[1,\infty\right)$, more generally, to $L_p(\Omega,\Sigma,\mu; E)$ whenever $\mu$ is a non-atomic positive measure and $E$ is almost transitive (see \cite{GrJaKa94}).

\begin{corollary}\label{cor:ATimpliesG-BP}
Let $X$ be an almost-transitive Banach space and $G=\Iso(X)$. Then, $X$ has the $G$-BP.
\end{corollary}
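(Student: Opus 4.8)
The plan is to observe that, for an almost-transitive $X$ with $G=\Iso(X)$, both delicate points in the definition of the $G$-BP disappear simultaneously: (a) the bounded closed absolutely convex $G$-invariant subsets of $X$ are exactly the balls $rB_X$, $r\ge 0$; and (b) by \Cref{prop:bigelements} every $T\in\mathcal{L}^G(X,Y)$ already attains its norm. Granting these, for $C=rB_X$ and any $T\in\mathcal{L}^G(X,Y)$ one has $\sup\{\|Tx\|:x\in C\}=r\|T\|$, attained at $r$ times a big point; so no perturbation is needed and we may take $T_\delta:=T$ for every $\delta>0$, which verifies the condition in \Cref{definition:G-BP}.

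To establish (a), I would let $C\subseteq X$ be a nonzero bounded closed absolutely convex $G$-invariant set, put $r:=\sup\{\|x\|:x\in C\}\in(0,\infty)$, and note $C\subseteq rB_X$ trivially. For the reverse inclusion, fix $x\in C\setminus\{0\}$. Since $X$ is almost-transitive, the orbit $\Iso(X)\cdot(x/\|x\|)$ is dense in $S_X$, hence $\Iso(X)\cdot x$ is dense in $\|x\|S_X$; as $C$ is $\Iso(X)$-invariant and closed, $\|x\|S_X\subseteq C$, and since $C$ is closed and convex, $\|x\|B_X=\overline{\co}(\|x\|S_X)\subseteq C$. Letting $\|x\|\to r$ gives $rB_X\subseteq C$, so $C=rB_X$. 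The degenerate case $C=\{0\}$ (and $X=\{0\}$) is trivial, the supremum being attained at $0$.

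For (b), recall that, as noted right after \Cref{def:big-points}, every almost-transitive Banach space is convex-transitive, so every point of $S_X$ is a big point; in particular $X$ contains a big point, and \Cref{prop:bigelements} then yields that every $G$-equivariant operator $T:X\to Y$ attains its norm (at any big point). Combining (a) and (b) finishes the argument exactly as outlined above. The only genuine content is step (a); the one point to handle carefully there is that $\Iso(X)$-invariance, closedness and almost-transitivity really do force $C$ to be a ball, which rests on the standard fact $\overline{\co}(\rho S_X)=\rho B_X$ — everything else is immediate from \Cref{prop:bigelements}.
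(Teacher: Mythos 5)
Your proposal is correct and follows essentially the same route as the paper: reduce via \Cref{prop:bigelements} to showing that every nonempty bounded closed (absolutely) convex $\Iso(X)$-invariant set is a ball $rB_X$, which you do by the same density-of-orbits plus convexity-and-closedness argument (the paper phrases the final step as $rB_X=\overline{\bigcup_{\eps\in(0,r)}(r-\eps)B_X}\subseteq C$ rather than via $\overline{\co}(\rho S_X)=\rho B_X$, but this is the same computation). No gaps.
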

\begin{proof}
By \Cref{prop:bigelements}, we get that $B_X$ has the $G$-BP. So, it is enough to show that, up to scaling, there are no other convex closed bounded $G$-invariant subsets. Fix $C \subseteq X$ to be a nonempty bounded closed convex $G$-invariant subset. We will show that $C = rB_X$ for some $r \geq 0$. Denote $r := \sup_{x \in C} \norm{x}$. Then, clearly $C \subseteq rB_X$ and if $r = 0$, then $\{0\} = C = rB_X$. This means we can assume $r > 0$. Let $\eps \in (0, r)$ and find $x \in C$ such that $\norm{x} > r-\eps$. By the almost-transitivity and closedness of $C$, we have that $\norm{x}S_X \subseteq C$. By the convexity of $C$, we even have that $\norm{x}B_X \subseteq C$. Since $\eps \in (0, r)$ was arbitrary, using that $C$ is closed once again, we conclude that
	\begin{equation*}
		rB_X = \overline{\bigcup_{\eps \in (0, r)} (r-\eps)B_X} \subseteq \overline{C} = C. \qedhere
	\end{equation*}
\end{proof}

\section{Summary of the results}\label{section:summary}
For the convenience of the reader, we review all the implications we have proved between the various properties that we have considered throughout the text, as well as the classical counterparts, in Diagram~\ref{diagram-dentability} below.

\begin{figure}[h]\label{figure:summary}
    \hbox to \hsize{\hfil
        \hspace{0.65cm}

        \begin{tikzpicture}[
            >=latex,
            arrow/.style={
                -Implies,
                double equal sign distance,
                line width=0.65pt
            },
            iffarrow/.style={
                Implies-Implies,
                double equal sign distance,
                line width=0.65pt
            },
            box/.style={
                draw,
                rounded corners,
                minimum width=2.34cm,
                minimum height=0.78cm,
                align=center,
                font=\normalsize
            },
            layoutspacer/.style={
                box,
                draw=none
            }
        ]


        \matrix (m) [
            matrix of nodes,
            row sep=1.235cm,
            column sep=1.04cm
        ] {
            \node[box] (GLindA)
                {\,$G$-property A\,};
            &
            \node[box] (GBP)
                {$G$-BP};
            &
            &
            \node[box] (BP)
                {BP};
            &
            \\
            \node[box] (GKMP)
                {$G$-KMP};
            &
            \node[box] (strongGdent)
                {Strong\\$G$-dentability};
            &
            &
            \node[box] (dent)
                {Dentability};
            &
            \\

            &
            \node[box] (weakGdent)
                {Weak\\$G$-dentability};
            &
            \node[layoutspacer] (thirdColumnSpacer)
                {\phantom{Very weak}\\
                 \phantom{$G$-dentability}};
            &
            \\
            \node[box] (KMP)
                {KMP};
            &
            \node[box] (GRNP)
                {$G$-RNP};
            &
            &
            \node[box] (RNP)
                {RNP};
            &
            \\
        };


        \draw[arrow]
            (RNP) -- (GRNP);

        \draw[iffarrow]
            (BP) -- (dent);

        \draw[iffarrow]
            (dent) -- (RNP);

        \draw[arrow]
            (GBP) -- (GLindA);


        \draw[arrow, bend left=30]
            (strongGdent) to (weakGdent);

        \draw[arrow, bend left=12]
            (strongGdent)
            to node[
                below,
                midway,
                font=\tiny
            ] {$G$ compact}
            (GKMP);


        \path
            let
                \p1 = (RNP.south),
                \p2 = (KMP.south)
            in
                coordinate (dropA)
                    at (\x1,\y1-8mm)
                coordinate (turnA)
                    at (\x2,\y1-8mm);

        \draw[
            arrow,
            shorten >=1pt,
            shorten <=1pt
        ]
            (RNP.south)
            -- (dropA)
            -- (turnA)
            -- (KMP.south);


        \draw[arrow, bend left=30]
            (KMP) to (GKMP);

        \draw[arrow, bend left=30]
            (GBP)
            to node[
                midway,
                right,
                font=\tiny
            ] {$G$ compact}
            (strongGdent);

        \draw[iffarrow, bend left=10]
            (dent) to (weakGdent.east);


        \draw[arrow, bend left=0]
            (GRNP)
            to node[
                midway,
                left,
                font=\tiny
            ] {
                \parbox{1.7cm}{
                    $G$ loc. cpt and $\sigma$-cpt
                }
            }
            (weakGdent);


        \draw[dotted, arrow, bend left=6]
            (GKMP) to (strongGdent);

        \draw[dotted, arrow, bend left=30]
            (GKMP) to (KMP);

        \draw[dotted, arrow, bend left=30]
            (strongGdent) to (GBP);

        \draw[dotted, arrow, bend left=10]
            (BP) to (GBP);




        \path
            let
                \p1 = (GRNP.south),
                \p2 = (RNP.south)
            in
                coordinate (dropGR)
                    at (\x1,\y1-3.5mm)
                coordinate (turnGR)
                    at (\x2-4mm,\y1-3.5mm);

        %



        \draw[arrow, bend left=0]
            (dent) to (strongGdent);

        \path[bend left=0]
            (dent)
            to node[
                midway,
                sloped,
                font=\Large,
                inner sep=0pt
            ] {$\times$}
            (strongGdent);


        \draw[arrow, bend left=10]
            (GBP) to (BP);

        \path[bend left=10]
            (GBP)
            to node[
                midway,
                sloped,
                font=\Large,
                inner sep=0pt,
                xshift=1pt
            ] {$\times$}
            (BP);


        \draw[arrow, bend left=30]
            (weakGdent) to (strongGdent);

        \path[bend left=30]
            (weakGdent)
            to node[
                midway,
                sloped,
                font=\Large,
                inner sep=0pt,
                xshift=1pt
            ] {$\times$}
            (strongGdent);

        \end{tikzpicture}

    \hfil}

    \captionsetup{
        labelformat=simple,
        labelsep=colon,
        name=Diagram
    }

    \caption{
        This diagram summarizes all the implications among the
        properties we have considered in the paper.
        Dotted arrows denote cases where it remains unclear whether
        the implication holds, whereas crossed arrows mark implications
        that generally fail to hold; counterexamples are provided
        throughout the text.
    }

    \label{diagram-dentability}
\end{figure}

 The reader can see from the diagram that the strongest results are obtained for $G$-Banach spaces when $G$ is compact. In this case, $G$-BP implies all the other $G$-properties.

 \begin{fact}
We briefly explain the crossed arrows, which indicate implications that fail to hold in general, by recalling the counterexamples provided earlier.
\begin{enumerate}
        \itemsep0.25em
        \item $L_1$ has, for the isometry group $G=\Iso(L_1)$, the $G$-BP by \Cref{cor:ATimpliesG-BP}. However, $L_1$ clearly fails the BP. This shows that the implication
    \[
    G\text{-BP} \implies \mathrm{BP}
    \]
    does not hold in general. Nevertheless, it does hold when $G$ is compact.
        \item Moreover, for $G=\Iso(L_1)$, the space $L_1$ fails to have the $G$-RNP by \Cref{ex:L1-RNP}. It is also not weakly $G$-dentable. Indeed, since $L_1$ is almost transitive, the proof of \Cref{cor:ATimpliesG-BP} shows that every closed, bounded, convex $G$-invariant subset of $L_1$ is a closed ball centered at $0$, which is not dentable.
        \item We observed in \Cref{cor:strongGdentIsStrong} that $L_p$ is not strongly $G$-dentable with respect to $G=\Iso (L_p)$, so the implication
    \[
    \text{Weak $G$-dentability (Dentability)} \implies \text{Strong $G$-dentability}
    \]
    does not hold in general. 
    \end{enumerate} 
 \end{fact}

\begin{problem} Decide whether the implications of the dotted arrows hold true.
\end{problem}

One of the most mysterious implications is the one between the classical Bishop-Phelps property and the $G$-Bishop-Phelps property. Notice that it is the only instance of the phenomenon, among the properties we have considered here, that the classical property does not obviously imply its $G$-analogue. It is possibly also related to the apparent difficulty of showing that strong $G$-dentability implies the $G$-BP (for a compact group $G$). It would certainly be very interesting to find an example of a $G$-Banach space with the BP which is strongly $G$-dentable but fails the $G$-BP, thereby disproving both implications at once.

It is not surprising that the question whether the $G$-KMP implies strong $G$-dentability remains open, given that even in the classical setting the long-standing problem of whether the KMP implies the RNP is still unresolved (see, for instance, \cite{LopezPerezMedina2024, Lopez-PerezMena1996}). This indicates that the $G$-analogue will likewise require further investigation, a direction we hope to explore in future work.

\addtocontents{toc}{\protect\setcounter{tocdepth}{1}}
\subsection*{Acknowledgments} 
The authors would like to thank Valentin Ferenczi for his advice on some references. The authors also thank the anonymous referee for carefully reading the manuscript, identifying inaccuracies in the original definition of the $G$-Radon-Nikod\'ym property, and providing a number of comments
which have improved the paper.

Sheldon Dantas was supported by PID2021-122126NB-C33 funded by MICIU/AEI \\ 
/10.13039/501100011033 and by ERDF/EU. Michal Doucha and Tom\'a\v{s} Raunig were supported by the GAČR project 25-15366S and by the Czech Academy of Sciences (RVO 67985840). Mingu Jung was supported by June E Huh Center for Mathematical Challenges (HP086601) at Korea Institute for Advanced Study and by the research fund of Hanyang University (HY-202500000003346).

\bibliographystyle{abbrv}

\bibliography{references}

\end{document}